\definecolor{wred}{rgb}{0.533333,0.10980,0.15686}
\definecolor{wredlight}{rgb}{0.788, 0.11, 0.157}
 \renewcommand*{\backrefalt}[4]{%
    \ifcase #1%
     \or (page:~#2)%
     \else (pages:~#2)%
    \fi%
    }
\def\@fnsymbol#1{\ensuremath{\ifcase#1\or \dagger\or \ddagger\or
   \mathsection\or \mathparagraph\or \|\or **\or \dagger\dagger
   \or \ddagger\ddagger \else\@ctrerr\fi}}
\newtheorem{theorem}{Theorem}[section]
\newtheorem{lemma}{Lemma}[section]
\newtheorem{definition}{Definition}[section]
\newtheorem{proposition}{Proposition}[section]
\newtheorem{example}{Example}[section]
\newtheorem{corollary}{Corollary}[section]
\newtheorem{remark}{Remark}[section]
\providecommand{\keywords}[1]
{
  \small	
  \textbf{\textit{Keywords---}} #1
}
\DeclareMathOperator{\diag}{diag}
\DeclareMathOperator*{\argmin}{arg\,min}
\DeclareMathOperator{\aff}{aff}
\DeclareMathOperator{\spann}{span}
\DeclareMathOperator{\ehull}{e-hull}
\newcommand{\hadamard}{\odot}
\newcommand{\zero}{0}
\newcommand{\plus}{+}
\newcommand{\set}[1]{\left\{ #1 \right\}}
\newcommand{\eqdef}{\triangleq}
\newcommand{\trn}{^\intercal}
\newcommand{\abs}[1]{\left| #1 \right|}
\newcommand{\nrm}[1]{\left\Vert #1 \right\Vert}
\newcommand{\pred}[1]{\delta\left[#1\right]}
\newcommand{\PR}[2][]{\mathbb{P}_{#1}\left( #2 \right)}
\newcommand{\eps}{\varepsilon}
\newcommand{\bigO}{\mathcal{O}}
\newcommand{\kl}[2]{D\left(#1 \middle| \middle| #2\right)}
\newcommand{\stoch}{{\mathfrak{s}}}
\newcommand{\bbR}{\mathbb{R}}
\newcommand{\bbN}{\mathbb{N}}
\newcommand{\calN}{\mathcal{N}}
\newcommand{\calR}{\mathcal{R}}
\newcommand{\calS}{\mathcal{S}}
\newcommand{\calT}{\mathcal{T}}
\newcommand{\calB}{\mathcal{B}}
\newcommand{\calX}{\mathcal{X}}
\newcommand{\calY}{\mathcal{Y}}
\newcommand{\calE}{\mathcal{E}}
\newcommand{\calM}{\mathcal{M}}
\newcommand{\calW}{\mathcal{W}}
\newcommand{\calH}{\mathcal{H}}
\newcommand{\calU}{\mathcal{U}}
\newcommand{\calV}{\mathcal{V}}
\newcommand{\calF}{\mathcal{F}}
\newcommand{\calL}{\mathcal{L}}
\newcommand{\calJ}{\mathcal{J}}
\newcommand{\calD}{\mathcal{D}}
\newcommand{\calG}{\mathcal{G}}
\newcommand{\enumi}{(i)\xspace}
\newcommand{\enumii}{(ii)\xspace}
\newcommand{\enumiii}{(iii)\xspace}
\newcommand{\enumiv}{(iv)\xspace}
\newcommand{\enumv}{(v)\xspace}
\title{\vspace{-1.0cm} 
Characterization of Exponential Families of \\ Lumpable Stochastic Matrices}
\begin{document}

\author[1]{Shun Watanabe\thanks{email: shunwata@cc.tuat.ac.jp.}}
\author[1]{Geoffrey Wolfer\thanks{email: wolfer@go.tuat.ac.jp. 
}}
\affil[1]{Tokyo University of Agriculture and Technology, Tokyo}

\date{\today}

\maketitle

\begin{abstract}
    It is known that the set of lumpable Markov chains over a finite state space, with respect to a fixed lumping function, generally does not form an exponential family of stochastic matrices. In this work, we explore efficiently verifiable necessary and sufficient conditions for families of lumpable transition matrices to form exponential families. To this end, we develop a broadly applicable dimension-based method for determining whether a given family of stochastic matrices forms an exponential family.
\end{abstract}

\keywords{Information geometry; irreducible Markov chain; lumpability; exponential family}

\tableofcontents

\section{Introduction}
\label{section:introduction}
Exponential families (e-families) of distributions are of established importance in statistics due to their distinctive properties for inference problems. For instance, they uniquely provide sufficient statistics capable of condensing any amount of independent and identically distributed data into a fixed number of values \citep{pitman1936sufficient, koopman1936distributions, darmois1935lois}. What is more, it is known that the maximum likelihood estimator achieves the Cram\'{e}r-Rao lower bound only\footnote{Note that this fact only holds when imposing additional regularity conditions on the family.} when the family of distributions forms an exponential family \citep{wijsman1973attainment,joshi1976attainment,fabian1977cramer, muller1989attainment}.
In the language of information geometry \citep{amari2007methods}, 
positive probability distributions are endowed with the structure of a smooth manifold with a pair of dual affine connections---the e-connection and m-connection---and statistical models are regarded as submanifolds. 
In this framework, being an e-family geometrically corresponds to being autoparallel with respect to the e-connection.
Furthermore, deviation from being an e-family---and the subsequent breakdown of the statistical properties---can be measured in terms of curvature; this characterizes second order efficiency of estimators \citep{efron1975defining}.
Recently, e-families have also been put under the spotlight in optimization since they allow for efficient natural-gradient computation \citep{amari1998natural}, which finds application in machine learning.

It is possible to similarly construct a dually flat geometry on the space of irreducible stochastic matrices \citep{nagaoka2005exponential} defined over a fixed strongly connected transition digraph. Independently and identically distributed (iid) processes, which can be regarded as memoryless Markov chains, are known to form an e-family in the larger family of irreducible stochastic matrices \citep{ito1988}.
This Markovian framework is consistent with the divergence rate of the corresponding Markov processes, and
information projections \citep{boza1971asymptotically, csiszar1987conditional}
which arise naturally from the study of large deviations \citep{moulos2019optimal} and hypothesis testing \citep{nakagawa1993converse, watanabe2017}.
In this regard, it strictly encompasses the vanilla framework for distributions, while accommodating for processes with time dependencies.

However, Markov processes
exhibit a significantly richer structure than their iid counterparts, with numerous properties---such as irreducibility, aperiodicity or time-reversibility---that are not pertinent to iid processes but are of well-established importance for Markov chains. A recently initiated research program seeks to analyze how these 
Markov-centric properties translate into information geometric features of the corresponding families of
stochastic matrices.
For instance, a Markov chain having a uniform stationary distribution is equivalent to being represented by a doubly stochastic transition matrix and it is well-established that the set of doubly stochastic matrices forms a mixture family \citep{hayashi2014information}.
Similarly, verifying the detailed-balance equation---indicating the time-reversibility of the stochastic process---means that the transition matrix is self-adjoint in a certain Hilbert space and the set of reversible stochastic matrices is known to form both a mixture family and an exponential family \citep{wolfer2021information}.
For context trees, it is known that a tree model forms an e-family if and only if it is an FSMX model
\citep{takeuchi2007exponential, takeuchi2017information}.
More recently, \citet{wolfer2024geometric} began analyzing lumpability of Markov chains \citep{kemeny1983finite}.
Lumpable Markov chains allow for the reduction of the state space by merging symbols without losing Markovianity, making them highly practical.
Specifically, the authors showed that although the lumpable set with respect to a fixed lumping map typically forms neither a mixture family nor an e-family, it is still possible to endow the family with the structure of a mutually dual foliated manifold, leading to a mixed coordinate system \citep[Chapter~3.7]{amari2007methods}.
Their construction is centered around the concept of a Markov embedding, defined as a right inverse of the lumping operation, which they argue serves a similar role to \v{C}encov's statistical morphisms \citep{cencov1983statistical} in the context of Markov chains.

The problem of selecting a good statistical model involves choosing one that has favorable analytical properties. In this regard, both e-families and lumpable families are highly sought-after models, and practitioners may be interested in enjoying the best of both worlds.
However, as previously mentioned, lumpable families do not generally form e-families. Indeed, they may or may not be e-families depending on their connection graph and the lumping map. This phenomenon contrasts with many previously analyzed classes; for instance, the set of reversible stochastic matrices, forms an e-family for any symmetric connection graph.
In this paper, we initiate the problem of characterizing the conditions under which lumpable stochastic matrices do form e-families.
Since Markov embeddings
demonstrably preserve e-families of stochastic matrices, they naturally generate one class of lumpable e-families.
However, this approach proves to be quite restrictive.
Perhaps surprisingly, it is possible to construct families that are not directly derived from the embedding of an e-family.
In this work we explore some necessary and sufficient conditions for the lumpable set to form an e-family.

\subsection*{Major contributions}
We summarize our main results below.
\begin{enumerate}[$\diamond$]
    \item \textbf{Necessary and sufficient criteria based on combinatorial properties of the connection digraph.} We obtain conditions on the lumpable family $\calW_{\kappa}(\calY, \calE)$ for being an e-family in terms of so-called multi-row merging blocks (refer to Definition~\ref{definition:merging-block}). 
    Namely, a sufficient condition (Corollary~\ref{corollary:no-multi-row-merging-block-is-sufficient}) for $\calW_\kappa(\calY, \calE)$ to be an e-family is that it exhibits no multi-row merging block, while if it exhibits a multi-row merging block which is redundant (Definition~\ref{definition:redundant-block}), this precludes the lumpable family from being exponential (Theorem~\ref{theorem:redundant-merging-block-criterion}). However, neither of the above conditions fully characterizes the property of being an e-family.
    In fact, we provide an alternative sufficient criteria (Proposition~\ref{proposition:lazy-cycle-criterion}), which shows in particular that an e-family could have an arbitrarily large number of multi-row merging blocks.
\item
\textbf{Complete characterization based on a novel dimensional criterion.}
Our chief technical contribution is a new methodology for determining whether a given family of stochastic matrices forms an exponential family, based on a dimension argument (refer to Theorem~\ref{theorem:general-dimensional-criterion}). 
Informally, for any subfamily $\calV$ of $\calW(\calY, \calE)$ which is defined by a finite set of linear constraints, $\calV$ forms an e-family if and only if $\dim \calV = \dim \ehull(\calV)$, where $\ehull(\calV)$ is the minimal e-family (refer to Definition~\ref{definition:e-hull}) generated by $\calV$. In this widely applicable setting, the problem thus reduces to the computation of the dimensions of $\calV$ and its e-hull.

Our new method contrasts with the more prevalent method of analyzing geodesics---which is for instance used to prove our redundant block criterion---in at least two aspects. First, while one can in principle rule out exponential families by exhibiting a geodesic and proving it leaves the manifold, constructing such an analytically tractable witness presents a challenge. 
Second, while one can confirm the e-family property heuristically by considering a large number of geodesics, it is practically impossible to inspect all geodesics.
We argue that our approach is also preferable to computing the e-curvature, which is often hard.

In particular, we demonstrate the practicality of our technique for determining whether the lumpable family is exponential.
We show that the e-hull of lumpable stochastic matrices can be expressed as the sum of the affine hull of the logarithm of positive lumpable functions and a linear space of functions (refer to Definition~\ref{definition:anti-shift-functions}).
As a consequence, to determine whether the lumpable family is exponential, it suffices to compare its manifold dimension to the dimension of the e-hull computed above  (refer to Theorem~\ref{theorem:dimensional-criterion}).
However, while the manifold dimension of the lumpable family is known \citep{wolfer2024geometric}, the dimension of its e-hull is not.
We solve this problem by constructing a basis for the e-hull of the positive lumpable cone 
and we show that as a result, the dimension of the e-hull can be computed using a polynomial-time algorithm.

We then specialize the above result into Corollary~\ref{corollary:dimensional-criterion-simplified} to obtain a necessary condition purely based on more elementary combinatorial properties of the connection graph and the lumping map.

\item \textbf{Monotonicity and stability.}
Finally, we examine how the property of being an e-family changes through basic operations on the edge set $\calE$.
In particular, we exhibit a monotonicity property of e-families of lumpable stochastic matrices. Namely, 
we show in Theorem~\ref{theorem:monotonicity} that it is generally the case that when $\calE \subset \calE'$,
if $\calW_\kappa(\calY, \calE')$ forms an e-family, then $\calW_\kappa(\calY, \calE)$ also forms an e-family.
\end{enumerate}

\begin{table}[]
\centering
    \caption{Table of Notation}
    \begin{tabular}{l|l|l}
   Symbol & Meaning & Definition \\
        $\calX$ & state space (typically smaller than $\calY$) & p.\pageref{nom:smaller-state-space} \\
        $\calY$ & state space (typically larger than $\calX$) & p.\pageref{nom:larger-state-space} \\
        $\calD$ & set of edges such that the graph $(\calX, \calD)$ is strongly connected & p.\pageref{nom:smaller-edge-set} \\
        $\calE$ & set of edges such that the graph $(\calY, \calE)$ is strongly connected & p.\pageref{nom:larger-edge-set} \\
        $\kappa : \calY \to \calX$ & surjective lumping map & p.\pageref{nom:lumping-map}\\ 
        $\stoch$ & normalization map from irreducible matrix to stochastic matrix & p.\pageref{nom:stochastic-rescaling} \\
        $\calS_x$ & preimage of $x \in \calX$ by $\kappa$ & p.\pageref{nom:x-preimage-kappa} \\
        $\calF(\calY, \calE)$ & real functions over $\calE$ & p.\pageref{nom:real-functions} \\
        $\calF^+(\calY, \calE)$ & positive functions over $\calE$ & p.\pageref{nom:positive-real-functions} \\
        $\calW(\calY, \calE)$ & stochastic matrices with connection graph $(\calY, \calE)$  & p.\pageref{nom:irreducible-stochastic-matrices} \\
        $\calW_\kappa(\calY, \calE)$ & $\kappa$-lumpable stochastic matrices with connection graph $(\calY, \calE)$ & p.\pageref{nom:lumpable-stochastic-matrices} \\
        $\calF_\kappa(\calY, \calE)$ & $\kappa$-lumpable real functions over $\calE$ & p.\pageref{nom:lumpable-functions} \\
        $\calF^+_\kappa(\calY, \calE)$ & $\kappa$-lumpable positive functions over $\calE$ & p.\pageref{nom:positive-lumpable-functions} \\
        $\calN(\calY, \calE)$ & functions in $\calF(\calY, \calE)$ that can be expressed as $f(y') - f(y) + c$,  &  \\
         & where $f \colon \calY \to \bbR$ and $c$ is a constant & p.\pageref{nom:antishift-plus-constant} \\
        $\calG(\calY, \calE)$ & quotient space $\calF(\calY, \calE) / \calN(\calY, \calE)$ & p.\pageref{nom:quotient-space} \\
        $\odot$ & Hadamard product & p.\pageref{nom:hadamard-product} \\
        $\ehull$ & exponential hull of a family of stochastic matrices & p.\pageref{nom:e-hull}
    \end{tabular}
    
    \label{table:nomenclature}
\end{table}

\section{Preliminaries}
\label{section:preliminaries}
\subsection{Notation}

We let $(\calY, \calE)$ be a directed graph (digraph) with finite vertex set \label{nom:larger-state-space}$\calY$ and edge set \label{nom:larger-edge-set}$\calE \subset \calY^2$. We assume that
$(\calY, \calE)$ is strongly connected, that is every vertex is reachable from every other vertex by traversing edges in their proper direction.
For $\{Y_t\}_{t \in \bbN}$ a time-homogeneous discrete time Markov chain (DTMC) over the space space $\calY$, we collect the transition probabilities into a row-stochastic matrix $P$. In other words, we write\footnote{
Our notation follows
the applied probability literature.
In the information theory literature, $P(y'|y)$ is sometimes used in lieu of $P(y, y')$.}
$$P(y,y') = \PR{Y_{t+1} = y' | Y_{t} = y}.$$
When $P(y,y') > 0$ if and only if $(y,y') \in \calE$, we say that $(\calY, \calE)$ is a connection graph for $P$.
We denote \label{nom:irreducible-stochastic-matrices}$\calW(\calY, \calE)$ the set of all irreducible row-stochastic matrices pertaining to the connection graph $(\calY, \calE)$.
We additionally define \label{nom:real-functions}$\calF(\calY, \calE) = \bbR^{\calE}$ the set of all real functions on the set of edges $\calE$ and 
\label{nom:positive-real-functions}$\calF^+(\calY, \calE)$ its positive subset. As it allows us to conveniently write a function $F \in \calF(\calY, \calE)$ in the form of a square matrix, we will routinely identify
\begin{equation*}
\begin{split}
    \calF(\calY, \calE) &\cong \set{ F \in \bbR^{\calY^2} \colon \forall (y,y') \not \in \calE \implies F(y,y') = 0 }, \\
    \calF^+(\calY, \calE) &\cong \set{ F \in \calF(\calY, \calE) \colon \forall (y,y') \in \calE \implies F(y,y') > 0 }. \\
\end{split}
\end{equation*}
Note that via the above identification, one can write $\calW(\calY, \calE) \subset \calF^+(\calY, \calE)$.
The Hadamard product of $A$ and $B$ in $\calF(\calY, \calE)$ is denoted \label{nom:hadamard-product}$A \odot B$ and for $t \in \bbR$, $A^{\odot t}$ is defined as the function such that for any $y,y' \in \calE$, $A^{\odot t}(y,y') = A(y,y')^t$.
We overload $\exp$ and $\log$ as follows,
\begin{equation*}
\begin{split}
    \exp \colon \calF(\calY, \calE) &\to \calF^+(\calY, \calE), \\
    \log\colon \calF^+(\calY, \calE) &\to \calF(\calY, \calE),
\end{split}
\end{equation*}
where for any $F \in \calF(\calY, \calE)$ and $(y,y') \in \calE$, $\exp(F)(y,y') = \exp(F(y,y'))$, and for any $F \in \calF^+(\calY, \calE)$ and $(y,y') \in \calE$, $\log(F)(y,y') = \log(F(y,y'))$.

\subsection{Lumpability}
\label{section:lumpability}

One classical operation on Markov processes is lumping \citep{kemeny1983finite}, which means merging symbols together and recording the observations on the reduced space. It is well known that this operation typically disrupts the Markov property \citep{burke1958markovian, rogers1981markov}. Chains for which the Markov property is preserved are called lumpable. More formally, for another state space \label{nom:smaller-state-space}$\calX$ with $\abs{\calX} \leq \abs{\calY}$, and for a surjective symbol merging map \label{nom:lumping-map}$\kappa \colon \calY \to \calX$, we say that the Markov chain
 $\{Y\}_{t \in \bbN}$ with transition matrix
 $P \in \calW(\calY, \calE)$ is $\kappa$-lumpable whenever the stochastic process $\{\kappa(Y_t)\}_{t \in \bbN}$ also forms a time-homogeneous DTMC. In particular, introducing the lumped edge set
\begin{equation*}
    \label{nom:smaller-edge-set}\calD \eqdef \kappa(\calE) \eqdef \set{ (\kappa(y), \kappa(y')) \colon (y,y') \in \calE } \subset \calX^2,
\end{equation*}
we note that the graph $(\calX, \calD)$ is strongly connected, and that the transition matrix of the lumped process, denoted $P^\flat$, satisfies $P^\flat \in \calW(\calX, \calD)$, where $\calW(\calX, \calD)$ is defined similarly to $\calW(\calY, \calE)$.
Denoting \label{nom:lumpable-stochastic-matrices}$\calW_\kappa(\calY, \calE)$ the $\kappa$-lumpable subset of $\calW(\calY, \calE)$,
observe that $\kappa$ induces 
a push-forward $\kappa_\star$ on stochastic matrices,
\begin{equation}
\begin{split}
\label{eq:push-forward-lumping}
    \kappa_\star \colon \calW_\kappa(\calY, \calE)  \to \calW(\calX, \calD)
\end{split}
\end{equation}
as well as a partition of the space $\calY$, which we denote by
\begin{equation*}
    \calY = \biguplus_{x \in \calX} \calS_x,
\end{equation*}
where for any $x \in \calX$, we wrote \label{nom:x-preimage-kappa}$\calS_x \eqdef \kappa^{-1}(x)$.
The following characterization of $\calW_\kappa(\calY, \calE)$ was provided by \citet{kemeny1983finite}. It holds that $P \in \calW_{\kappa}(\calY, \calE)$
if and only if for any $(x,x') \in \calD$ and any $y_1, y_2 \in \calS_{x}$,
\begin{equation}
\label{eq:kemeny-snell-condition}
    P(y_1, \calS_{x'}) = P(y_2, \calS_{x'}),
\end{equation}
where we used the shorthand $P(y, \calS_{x'}) = \sum_{y' \in \calS_{x'}} P(y,y')$ for $(x,x') \in \calD$ and $y \in \calS_{x}$.
It is instructive to note that
the Kemeny--Snell condition enables us
to provide an explicit form of $\kappa_\star P$. Indeed, for any $(x,x') \in \calD$, it holds that $\kappa_\star P(x,x') = P(y, {\calS}_{x'})$ for any $y \in \calS_{x}$.
We will use the notation\footnote{We note that in differential geometry $\sharp$ and $\flat$ commonly denote the musical isomorphism. However, in this paper, we use these symbols differently.} of \citet{levin2009markov} to often---albeit not always---disambiguate objects which pertain to the larger space using the superscript $\sharp$ and which pertain to the reduced space using the superscript $\flat$.
Note that the Kemeny--Snell condition \eqref{eq:kemeny-snell-condition}  
can be rewritten in matrix form. Indeed,
for any $P \in \calW(\calY, \calE)$,
    from \citep{barr1977eigenvector},
    we have that $P \in \calW_\kappa(\calY, \calE)$ if and only if,
\begin{equation*}
    K \overline{K} \trn P K = P K,
\end{equation*}
where $K$ is a $\abs{\calY} \times \abs{\calX}$ matrix of entries in $\mathbb{B} = \set{0,1}$ identified with the map
\begin{equation*}
\begin{split}
    K \colon \calY \times \calX &\to \set{0,1} \\
    (y,x) &\mapsto K(y,x) = \begin{cases}
                1 &\text{ when } y \in \calS_x \\
                0 &\text{ otherwise,}
            \end{cases}
\end{split}
\end{equation*}
and \begin{equation*}
    \overline{K}\trn = (K \trn K)^{-1} K \trn
\end{equation*}
is a rescaling of $K \trn$ such that rows are normalized to probability vectors.
As a result, the $\kappa$-lumping condition for $P$ can be rewritten as
\begin{equation}
\label{eq:kemeny-snell-condition-matrix-form}
    (K \overline{K}\trn - I )P K = 0.
\end{equation}
Two lumpable families of stochastic matrices are said to be equivalent if they coincide upon relabeling of the state space and lumped state space.
Namely, for $\kappa_1 \colon \calY_1 \to \calX_1, \kappa_2 \colon \calY_2 \to \calX_2$ two lumping maps, the lumpable families $\calW_{\kappa_1}(\calY_1, \calE_1)$ and $\calW_{\kappa_2}(\calY_2, \calE_2)$ are equivalent, which we denote
\begin{equation*}
    \calW_{\kappa_1}(\calY_1, \calE_1) \cong \calW_{\kappa_2}(\calY_2, \calE_2),
\end{equation*}
whenever there exist two bijections $\phi^\sharp \colon \calY_1 \to \calY_2$ and $\phi^\flat \colon \calX_1 \to \calX_2$ such that
\begin{equation*}
\begin{split}
\forall (y,y') \in \calY_1^2, 
    (\phi^\sharp(y),\phi^\sharp(y')) \in \calE_2 &\Longleftrightarrow (y,y') \in \calE_1, \\
    \forall (x,x') \in \calX_1^2, 
    (\phi^\flat(x),\phi^\flat(x')) \in \calD_2 = \kappa(\calE_2), &\Longleftrightarrow (x,x') \in \calD_1 = \kappa(\calE_1), \\
   \forall y \in \calY_1, \phi^\flat(\kappa_1(y)) &=  \kappa_2(\phi^\sharp (y)).
\end{split}
\end{equation*}

\subsection{Exponential families of stochastic matrices}

\begin{definition}[$\stoch$-normalization]
\label{definition:s-normalization}
When $(\calY, \calE)$ is strongly connected we define the mapping
\begin{equation*}
\begin{split}
    \label{nom:stochastic-rescaling}\stoch \colon \calF^+(\calY,\calE) &\to \calW(\calY,\calE) \\
    F &\mapsto P \colon \calE \to \bbR_+, (y,y') \mapsto P(y,y') = \frac{F(y,y') v_F(y')}{\rho_F v_F(y)},
\end{split}
\end{equation*}    
where $\rho_F$ and $v_F$ are respectively the Perron--Frobenius (PF) root and associated right eigenvector of $F$. Henceforth, $(\rho_F, v_F)$ will be called the right PF eigen-pair of $F$.
\end{definition}

The above-defined operation normalizes an arbitrary non-negative irreducible matrix into a  stochastic matrix \citep{miller1961convexity}.
 Essentially, $\stoch$-normalization plays the role of dividing by the potential function in the distribution setting.

\begin{definition}[{\citep[Section~3]{nagaoka2005exponential}}]
\label{definition:anti-shift-functions}
Let
\begin{equation*}
\begin{split}\label{nom:antishift-plus-constant}
\calN(\calY, \calE) \eqdef \bigg\{ &N \in \calF(\calY, \calE) \colon \exists (c, f) \in (\bbR, \bbR^\calY), \forall (y, y') \in \calE, N(y, y') = f(y') - f(y) + c \bigg\},
\end{split}
\end{equation*}
and observe that 
$\mathcal{N}(\calY, \calE)$ forms a $\abs{\calY}$-dimensional vector subspace.
\end{definition}

\begin{definition}[e-family of stochastic matrices {\citep{nagaoka2005exponential}}]
\label{definition:e-family}
    We say that the parametric family of irreducible stochastic matrices $$\calV_e = \set{P_\theta \colon \theta = (\theta^1, \dots, \theta^d) \in \bbR^d} \subset \calW( \calY, \calE),$$ 
forms an exponential family (e-family) of stochastic matrices with natural parameter $\theta$ and dimension $d$, when
there exist a function $K \in \calF(\calY, \calE)$ and $d$ linearly independent functions\footnote{Although $G_1, \dots, G_d$ are technically cosets, we treat them as coset representatives in $\calF(\calY, \calE)$.} $G_1, \dots, G_d \in \calG(\calY, \calE)$, 
such that
    \begin{equation*}
        P_\theta = \stoch \circ \exp \left(K + \sum_{i = 1}^{d} \theta^i G_i\right),
    \end{equation*}
    where $\mathcal{G}(\calY, \calE)$ is the quotient space
\begin{equation*}\label{nom:quotient-space}
    \calG(\calY, \calE) \eqdef \calF(\calY, \calE)/\calN(\calY, \calE),
\end{equation*}
with $\calN(\calY, \calE)$ introduced in Definition~\ref{definition:anti-shift-functions}
    and $\stoch$-normalization follows from  Definition~\ref{definition:s-normalization}.
\end{definition}

In other words, there exists a one-to-one correspondence between linear subspaces of $\calG(\calY, \calE)$ and e-families \citep[Theorem~2]{nagaoka2005exponential} through the diffeomorphism
\begin{equation*}
\begin{split}
    \stoch \circ \exp \colon \calG(\calY, \calE) &\to \calW(\calY, \calE).
    \end{split}
\end{equation*}

\begin{remark}
    Note that in Definition~\ref{definition:e-family}, the parameter space is always taken to be $\bbR^d$; however, one could in principle define exponential families on a general convex subset of $\bbR^d$.
\end{remark}

Similarly, a mixture family (m-family) of stochastic matrices is induced from the affine hull of a collection of irreducible edge measures \citep{nagaoka2005exponential} (refer also to \citet[Section~4.2]{ hayashi2014information}).
An m-family which also forms an e-family is called an em-family.
For instance, the set of all irreducible stochastic matrices $\calW(\calY, \calE)$ 
is known to form an em-family \citep{nagaoka2005exponential}.
\begin{example}[Birth-and-death-chains]
 Let $\calY = \set{1, 2, \dots, k}$ be the set of integers up to $k$ and 
$$\calE = \set{ (y, y') \in \calY^2 \colon |y - y'| \leq 1 }.$$ 
A Markov chain with a transition matrix  $P \in \calW(\calY, \calE)$ is referred to as a birth-and-death-chain, and $\calW(\calY, \calE)$ forms both an m-family and an e-family. In fact, $\calW(\calY, \calE)$ is an example of a reversible e-family \citep[Example~2]{wolfer2021information}.
\end{example}
What is more, the reversible subset is also an em-family \citep{wolfer2021information} while the subset of bistochastic matrices forms an m-family but does not form an e-family \citep{hayashi2014information}.
For a more comprehensive introduction to the information geometry of Markov chains, we refer the reader to the recent survey by \citet{wolfer2023information}.

\subsection{Foliation on the \texorpdfstring{$\kappa$-lumpable}{lumpable} family.}

Although the lumpable family $\calW_{\kappa}(\calY, \calE)$ was shown by \citet{wolfer2024geometric} to generally not form an m-family or an e-family of stochastic matrices, it is always possible to obtain a decomposition it in terms of simpler mathematical structures, called a foliation.\footnote{A foliation is a decomposition of a manifold into a union of connected but disjoint submanifolds, called leaves, all sharing the same dimension \citep[Chapter~19]{lee2013smooth}.}
This decomposition is facilitated by the notion of a Markov embedding \citep[Definition~4.3]{wolfer2024geometric}, which corresponds to a right inverse of the lumping operation and satisfies additional natural structural constraints. As such, Markov embeddings are the counterparts of the statistical morphisms axiomatized by \citet{cencov1983statistical} in the context of stochastic matrices.
In particular, any $P \in \calW_{\kappa}(\calY, \calE)$ uniquely induces a canonical embedding \citep[Lemma~4.8]{wolfer2024geometric} denoted $\Lambda_\star^{(P)} \colon \calW(\calX, \calD) \to \calW_{\kappa}(\calY, \calE)$ satisfying $P = \Lambda_\star^{(P)} \kappa_\star P$, where $\kappa_\star$ is the push-forward of the lumping map $\kappa$ (refer to \eqref{eq:push-forward-lumping}).
It was established that for any $P^\sharp_0 \in \calW(\calY, \calE)$, the embedding of the family $\calW(\calX, \calD)$ by $\Lambda_\star^{(P^\sharp_0)}$
\begin{equation*}
    \calJ(P^\sharp_{0}) \eqdef \set{ \Lambda_\star^{(P^\sharp_{0})}P \colon P \in \calW(\calX, \calD) },
\end{equation*}
forms an e-family of stochastic matrices.
Additionally, for any $P_0^\flat \in \calW(\calX, \calD)$, the set of stochastic matrices lumping into $P^{\flat}_{0}$,
\begin{equation*}
    \calL(P^{\flat}_{0}) \eqdef \set{ P \in \calW_\kappa(\calY, \calE) \colon \kappa_\star P = P^{\flat}_{0}},
\end{equation*}
form an m-family, and
the manifold $\calW_{\kappa}(\calY, \calE)$ can be endowed with the following e-foliation structure.

\begin{proposition}[Foliation on $\calW_{\kappa}(\calY, \calE)$  {\citep[Theorem~6.4]{wolfer2024geometric}}]
\label{proposition:foliation-of-lumpable-kernels}
For any fixed $P^\flat_0 \in \calW(\calX, \calD)$,
\begin{equation*}
    \calW_\kappa(\calY, \calE) = \biguplus_{P \in \calL(P^\flat_0)} \calJ(P),
\end{equation*}
\begin{equation*}
    \dim \calW_\kappa(\calY, \calE) = \abs{\calE} - \sum_{(x,x') \in \calD} \abs{\calS_x} + \abs{\calD} - \abs{\calX}.
\end{equation*}
\end{proposition}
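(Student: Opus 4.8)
The plan is to establish the disjoint‑union decomposition first, using the defining properties of the canonical embedding, and then to read off the dimension by a direct parameter count that is cross‑checked against the leaf‑plus‑base tally of the foliation.

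\textbf{Covering.} Fix $P \in \calW_\kappa(\calY,\calE)$ and put $P' \eqdef \Lambda_\star^{(P)} P^\flat_0$. Since a Markov embedding is a right inverse of the push‑forward, $\kappa_\star P' = \kappa_\star \Lambda_\star^{(P)} P^\flat_0 = P^\flat_0$, so $P' \in \calL(P^\flat_0)$. Unwinding the explicit form of the canonical embedding from \citep[Lemma~4.8]{wolfer2024geometric}, one checks that $\Lambda_\star^{(\Lambda_\star^{(P)} Q)} = \Lambda_\star^{(P)}$ for every $Q \in \calW(\calX,\calD)$; in particular $\Lambda_\star^{(P')} = \Lambda_\star^{(P)}$. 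Therefore $P = \Lambda_\star^{(P)}(\kappa_\star P) = \Lambda_\star^{(P')}(\kappa_\star P) \in \calJ(P')$, which gives $\calW_\kappa(\calY,\calE) \subset \bigcup_{P' \in \calL(P^\flat_0)} \calJ(P')$; the reverse inclusion is immediate since each $\calJ(P')$ is a subset of $\calW_\kappa(\calY,\calE)$.

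\textbf{Disjointness.} Suppose $R \in \calJ(P_1) \cap \calJ(P_2)$ with $P_1, P_2 \in \calL(P^\flat_0)$, say $R = \Lambda_\star^{(P_1)} Q_1 = \Lambda_\star^{(P_2)} Q_2$. Applying $\kappa_\star$ and using again that each $\Lambda_\star^{(P_i)}$ is a right inverse of $\kappa_\star$ yields $Q_1 = \kappa_\star R = Q_2$, and then the identity above gives $\Lambda_\star^{(P_1)} = \Lambda_\star^{(R)} = \Lambda_\star^{(P_2)}$. Since $\kappa_\star P_i = P^\flat_0$ and $P_i = \Lambda_\star^{(P_i)} \kappa_\star P_i$, we conclude $P_1 = \Lambda_\star^{(P_1)} P^\flat_0 = \Lambda_\star^{(P_2)} P^\flat_0 = P_2$. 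Hence distinct base points produce disjoint sets, and the union above is in fact the disjoint union $\biguplus_{P \in \calL(P^\flat_0)} \calJ(P)$.

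\textbf{Dimension.} Within $\calF(\calY,\calE)$, the set $\calW_\kappa(\calY,\calE)$ is the intersection of the open cone $\calF^+(\calY,\calE)$ with the affine subspace cut out by the row‑normalization equations together with the Kemeny--Snell equations \eqref{eq:kemeny-snell-condition-matrix-form}, and $\calL(P^\flat_0)$ is obtained by further imposing $\kappa_\star P = P^\flat_0$; so each of these dimensions equals that of the corresponding affine subspace. For fixed $y \in \calS_x$ and fixed $x'$ with $(x,x') \in \calD$, the entries $\set{ P(y,y') \colon y' \in \calS_{x'},\ (y,y') \in \calE }$ are unconstrained except for summing to the prescribed value $P^\flat_0(x,x')$, contributing $e(y,x') - 1$ free parameters, where $e(y,x') \eqdef \abs{\set{ y' \in \calS_{x'} \colon (y,y') \in \calE }}$ and $e(y,x') \geq 1$ whenever $\calW_\kappa(\calY,\calE) \neq \emptyset$ (the statement being vacuous otherwise). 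Summing over $y \in \calY$ and over admissible $x'$ gives $\dim \calL(P^\flat_0) = \abs{\calE} - \sum_{(x,x') \in \calD} \abs{\calS_x}$. On the other hand $\kappa_\star$ restricts to a bijection $\calJ(P) \to \calW(\calX,\calD)$ with inverse $\Lambda_\star^{(P)}$, so $\dim \calJ(P) = \dim \calW(\calX,\calD) = \abs{\calD} - \abs{\calX}$ by the standard count for irreducible stochastic matrices over a fixed connection graph. Finally, $\kappa_\star$ is the restriction of a surjective linear map from the ambient affine space of $\calW_\kappa(\calY,\calE)$ onto that of $\calW(\calX,\calD)$, with fibers the affine spaces containing the leaves $\calL(\cdot)$; hence $\dim \calW_\kappa(\calY,\calE) = \dim \calL(P^\flat_0) + \dim \calW(\calX,\calD) = \abs{\calE} - \sum_{(x,x') \in \calD} \abs{\calS_x} + \abs{\calD} - \abs{\calX}$, the claimed formula, which also matches the leaf‑plus‑base count of the foliation.

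\textbf{Main obstacle.} The crux is the identity $\Lambda_\star^{(\Lambda_\star^{(P)} Q)} = \Lambda_\star^{(P)}$, which drives both the covering and the disjointness arguments: it requires making the canonical normalization of \citep[Lemma~4.8]{wolfer2024geometric} explicit and verifying that embedding a kernel and then extracting its induced canonical embedding returns the embedding one started with. A lesser, purely combinatorial point is confirming that nonemptiness of $\calW_\kappa(\calY,\calE)$ forces every $y \in \calS_x$ to have an out‑edge into each block $\calS_{x'}$ with $(x,x') \in \calD$, so that the per‑block tally $e(y,x') - 1$ is legitimate.
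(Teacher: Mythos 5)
This proposition is not proved in the paper at all: it is imported verbatim from \citet[Theorem~6.4]{wolfer2024geometric}, so there is no internal proof to compare against. Your proposal supplies a self-contained argument, and its overall structure is sound. The covering and disjointness steps both reduce correctly to the single identity $\Lambda_\star^{(\Lambda_\star^{(P)}Q)} = \Lambda_\star^{(P)}$, which does hold for the canonical embedding (it only remembers the within-block conditional splitting of $P$, and that splitting is unchanged when the lumped kernel is replaced by $Q$); you are right to flag that this is the one step that genuinely requires unwinding the explicit formula of \citet[Lemma~4.8]{wolfer2024geometric} rather than just the right-inverse property. The dimension count is also correct: nonemptiness of $\calW_\kappa(\calY,\calE)$ does force $e(y,x')\geq 1$ for every $y\in\calS_x$ and $(x,x')\in\calD$ (otherwise $P(y,\calS_{x'})=0\neq P^\flat(x,x')$ for some lumpable $P$), so the per-row tally $\sum_{y,x'}(e(y,x')-1)=\abs{\calE}-\sum_{(x,x')\in\calD}\abs{\calS_x}$ for the fiber $\calL(P_0^\flat)$ is legitimate, and adding $\dim\calW(\calX,\calD)=\abs{\calD}-\abs{\calX}$ via the affine submersion $\kappa_\star$ (surjective because every $P^\flat$ admits at least one lumpable preimage) gives the stated formula. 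In short: the proof is essentially complete modulo the deferred verification of the key identity, which is exactly the content one would extract from the cited lemma.
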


Mutually dual foliations and mixed coordinate systems play a significant role in information geometry \citep[Section~3.7]{amari2007methods}.
In particular, similar to the probability distribution setting,
the foliated structure of $\calW_\kappa(\calY, \calE)$ can be interpreted
in the context of maximum likelihood estimation of embedded models of Markov chains.
We refer the reader to \cite[Section~6.2]{wolfer2024geometric} for a discussion and additional applications.

\paragraph{Problem statement---}
Our goal is to obtain a full characterization of exponential families of lumpable stochastic matrices in terms of 
properties of the connection graph $(\calY, \calE)$ and the lumping map $\kappa$. Ideally, we wish to develop necessary and sufficient conditions which are all verifiable in polynomial time.

\paragraph{Motivation---}

Most previous studies of the geometric structure of well-known families  of stochastic matrices (for instance reversible, bistochastic or memoryless) have established geometric structures that are valid for general edge sets.
This is in stark contrast to lumpable stochastic matrices, where---perhaps surprisingly---the nature of the family depends intricately on the structure of the edge set and its interplay with the lumping map.
Another exception is found in
\citep{takeuchi2007exponential, takeuchi2017information},
where the e-family nature of the context tree
depends on some additional structural properties of the tree, which partially motivated our question.

In addition, exponential families of stochastic matrices enjoy distinctive properties that may offer analytical power to the practitioner.
For instance, the asymptotic variance of a function $G \in \bbR^\calE$ with respect to some irreducible stochastic matrix $K$ can be expressed using the second derivative of the potential function of the one-parameter exponential family (Definition~\ref{definition:e-family}) anchored at $K$ and tilted by $G$ \citep{hayashi2014information}.
Furthermore, when $\calW_{\kappa}(\calY, \calE)$ forms an e-family, Bregman geometry yields a Pythagorean identity \citep{hayashi2014information}.
Specifically, for any $P \in \calW(\calY, \calE)$ and  $\overline{P} \in \calW_\kappa(\calY, \calE)$,
\begin{equation*}
    \kl{P}{\overline{P}} = \kl{P}{P_m} + \kl{P_m}{\overline{P}}
\end{equation*}
where 
\begin{equation*}
    P_m \eqdef \argmin_{\widetilde{P} \in \calW_{\kappa}(\calY, \calE)} \kl{P}{\widetilde{P}}
\end{equation*}
is the unique m-projection (reverse information projection) of $P$ onto $\calW_{\kappa}(\calY, \calE)$. Here $D$ is the Kullback--Leibler divergence rate defined for $P, P' \in \calW(\calY, \calE)$ as
\begin{equation*}
    \kl{P}{P'} \eqdef \sum_{(y,y') \in \calE} \pi(y) P(y,y') \log \frac{P(y,y')}{P'(y,y')},
\end{equation*}
where $\pi$ is the unique stationary distribution of $P$.

\paragraph{First approach---} As Markov embeddings are known to be e-geodesic affine \citep[Theorem~10]{wolfer2024geometric}, an immediate sufficient condition for a family $\calW_\kappa(\calY, \calE)$ to be an e-family consists in proving the existence of an embedding $\Lambda_\star$ satisfying $\calW_\kappa(\calY, \calE) = \Lambda_\star \calW(\calX, \calD)$. This corresponds to restricting the foliation of Proposition~\ref{proposition:foliation-of-lumpable-kernels} to a single e-leaf $\calJ$. As we will see in this paper, this condition is quite restrictive; there exist many more exponential families.

\section{The lumpable cone}
\label{section:lumpable-cone}
Although the purpose of this paper is to provide methods for determining whether $\calW_\kappa(\calY, \calE)$ forms an e-family, we first consider a related problem---the original problem without $\stoch$-normalization---due to its simpler structure.
Let $(\calY, \calE)$ be strongly connected and $\kappa \colon \calY \to \calX$ be a surjective lumping function.
Similar to \citet{wolfer2024geometric}, we define the set of lumpable functions as follows,
\begin{equation*}\label{nom:lumpable-functions}
    \calF_\kappa(\calY, \calE) \eqdef \set{ F \in \calF(\calY, \calE) \colon \forall (x,x') \in \calD, \forall y_1, y_2 \in \calS_{x}, F(y_1,\calS_{x'}) =  F(y_2, \calS_{x'}) }.
\end{equation*}
The positive subset \label{nom:positive-lumpable-functions}$\calF^{+}(\calY, \calE)$ of $\calF(\calY, \calE)$, forms a blunt\footnote{A convex cone is called blunt if it does not contain the null vector.} convex cone, while $\calF^{+}_\kappa(\calY, \calE)$ is a subcone of $\calF^{+}(\calY, \calE)$, as depicted on Figure~\ref{figure:cone-section}.

\begin{proposition}[Commutativity]
\label{proposition:stochastic-rescaling-and-lumping-commute-and-partition-constant-eigenvector}
The operation of $\stoch$-normalization satisfies the following properties.
\begin{enumerate}
    \item[\enumi] $\stoch$-normalization preserves $\kappa$-lumpability and 
the following diagram commutes
\[ \begin{tikzcd}
  \calF_\kappa^+(\calY, \calE) \arrow{r}{\kappa_\star} \arrow[swap]{d}{\stoch} & \calF^+(\calX, \calD)  \arrow{d}{\stoch} \\%
\calW_\kappa(\calY, \calE) \arrow{r}{\kappa_\star}&  \calW(\calX, \calD).
\end{tikzcd}
\]
\item[\enumii] For any $F \in \calF^+_\kappa(\calY, \calE)$, the right Perron--Frobenius eigenvector $v$ of $F$ satisfies that for any $x \in \calX$ and any $y_1,y_2 \in \calS_x$, $v(y_1) = v(y_2)$---we say that $v$ is constant on the partition induced from $\kappa$.
\end{enumerate}
\end{proposition}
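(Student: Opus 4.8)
The plan is to first recast $\kappa$-lumpability of a \emph{function} in matrix form, in analogy with \eqref{eq:kemeny-snell-condition-matrix-form}, so that its interaction with the right Perron--Frobenius eigenvector becomes transparent. For $F \in \calF^+(\calY, \calE)$ one computes directly $(FK)(y, x') = F(y, \calS_{x'})$, whence $F \in \calF^+_\kappa(\calY, \calE)$ if and only if $FK = KM$, where $M$ is the $\abs{\calX}\times\abs{\calX}$ matrix with $M(x,x') = F(y, \calS_{x'})$ for (any) $y \in \calS_x$; by lumpability this $M$ is well defined, and it is exactly the push-forward $\kappa_\star F = \overline{K}\trn F K$ that appears in the diagram. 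I would then record that $M \in \calF^+(\calX, \calD)$: its support digraph is precisely $(\calX, \calD)$, which is strongly connected, so $M$ is irreducible and admits a right PF eigen-pair $(\rho_M, v_M)$ with $v_M > 0$.

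For \enumii, the idea is to lift $v_M$ back up to $\calY$. From $FK = KM$ and $M v_M = \rho_M v_M$ one gets $F(K v_M) = (FK)v_M = K(M v_M) = \rho_M (K v_M)$, and $K v_M > 0$ since $v_M > 0$ and each row of $K$ contains a single $1$. Because an irreducible nonnegative matrix has---up to positive scaling---a unique nonnegative eigenvector, necessarily its Perron vector, this forces $\rho_F = \rho_M$ and $v_F \propto K v_M$; and $(K v_M)(y) = v_M(\kappa(y))$ is constant on each block $\calS_x$ by construction, which is the claim. I expect this lifting step, combined with the uniqueness of the positive Perron eigenvector, to be the only substantive point of the proposition; everything else is bookkeeping.

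For \enumi, I would fix the scaling so that $v_F = K v_M$, hence $v_F(y) = v_M(\kappa(y))$ and $\rho_F = \rho_M$ by \enumii, and then simply unfold the definition of $\stoch$. For $(x,x') \in \calD$ and any $y \in \calS_x$, the computation to carry out is
\begin{equation*}
    \stoch(F)(y, \calS_{x'}) = \frac{v_M(x')}{\rho_M\, v_M(x)} \sum_{y' \in \calS_{x'}} F(y,y') = \frac{v_M(x')\, M(x,x')}{\rho_M\, v_M(x)},
\end{equation*}
whose right-hand side does not depend on the representative $y \in \calS_x$; this verifies the Kemeny--Snell condition \eqref{eq:kemeny-snell-condition}, i.e.\ $\stoch(F) \in \calW_\kappa(\calY, \calE)$, so $\stoch$-normalization preserves $\kappa$-lumpability. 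Reading the same identity the other way, the right-hand side equals $\stoch(M)(x,x') = \stoch(\kappa_\star F)(x,x')$ (the scaling ambiguity in $v_M$ cancels in the ratio), while the left-hand side equals $\kappa_\star(\stoch F)(x,x')$ by the explicit form of the push-forward recalled below \eqref{eq:kemeny-snell-condition}; this is exactly the commutativity of the square, completing the plan.
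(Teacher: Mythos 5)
Your proposal is correct and follows essentially the same route as the paper: the identity $F(Kv_M) = K(Mv_M) = \rho_M(Kv_M)$ is exactly the paper's elementwise computation (attributed to Barr--Thomas) showing that $v^\flat \circ \kappa$ is a positive eigenvector of $F$, after which both arguments invoke uniqueness of the Perron pair and then unfold $\stoch$ blockwise to get lumpability and commutativity. The only difference is cosmetic---you package the Kemeny--Snell condition as $FK = KM$ rather than writing the sums out---so no further comment is needed.
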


\begin{proof}

Let $F \in \calF_\kappa^{+}(\calY, \calE)$. Since $(\calY, \calE)$ is strongly connected, $\stoch$-normalization is well-defined for $F \in \calF_\kappa^{+}(\calY, \calE)$ and we first show that $\stoch(F) \in \calW_\kappa(\calY, \calE)$.
By construction, $\stoch(F)$ is irreducible and shares the same support as $F$. It remains to verify lumpability.
We let $F^\flat = \kappa_\star F$ denote the lumping of $F$, and $(\rho^\flat, v^\flat)$ be the right PF eigen-pair of $F^\flat$.
A classical argument of \citet{barr1977eigenvector} (see also \citet[Lemma~12.9]{levin2009markov}) implies that $(\rho^\flat, v^\flat \circ \kappa)$ is the right PF eigen-pair of $F$. Indeed, for any $y \in \calY$, we can verify that
\begin{equation*}
    \begin{split}
        \sum_{y' \in \calY} F(y,y') v^\flat(\kappa(y')) &= \sum_{x' \in \calX} \sum_{y' \in \calS_{x'}} F(y,y') v^{\flat}(\kappa(y')) = \sum_{x' \in \calX} \left(\sum_{y' \in \calS_{x'}} F(y,y') \right) v^{\flat}(x') \\
        &= \sum_{x' \in \calX}  F^\flat(\kappa(y),x') v^{\flat}(x') = \rho^\flat v^{\flat}(\kappa(y)), \\
    \end{split}
\end{equation*}
and the claim holds by unicity of the PF root and by unicity---up to rescaling---of the associated PF right eigenvector.
As a consequence, for any $(y,y') \in \calY^2$,
\begin{equation*}
    \stoch(F)(y,y') = F(y,y') \frac{v^\flat(\kappa(y'))}{\rho^\flat v^\flat(\kappa(y))}.
\end{equation*}
For any $x \in \calX$ and any $y \in \calS_x$, it holds that
\begin{equation}
\begin{split}
\label{eq:lumpability-of-rescaling}
    \sum_{y' \in \calS_{x'}}\stoch(F)(y,y') &= \sum_{y' \in \calS_{x'}} F(y,y') \frac{v^\flat(\kappa(y'))}{\rho^\flat v^\flat(\kappa(y))}
    = \sum_{y' \in \calS_{x'}} F(y,y') \frac{v^\flat(x')}{\rho^\flat v^\flat(x)} = F^\flat(x,x') \frac{v^\flat(x')}{\rho^\flat v^\flat(x)},
\end{split}
\end{equation}
which does not depend on $y \in \calS_x$. As a result, $\stoch(F)$ is $\kappa$-lumpable, and \eqref{eq:lumpability-of-rescaling} also means that $\kappa_\star \stoch(F) = \stoch(F^\flat)$.  
\end{proof}

\begin{figure}
\begin{center}

\tikzset{every picture/.style={line width=0.75pt}} %

\begin{tikzpicture}[x=0.52pt,y=0.52pt,yscale=-1,xscale=1]

\draw  [color={rgb, 255:red, 208; green, 2; blue, 27 }  ,draw opacity=1 ] (198,77) .. controls (198,59.33) and (248.37,45) .. (310.5,45) .. controls (372.63,45) and (423,59.33) .. (423,77) .. controls (423,94.67) and (372.63,109) .. (310.5,109) .. controls (248.37,109) and (198,94.67) .. (198,77) -- cycle ;
\draw [color={rgb, 255:red, 208; green, 2; blue, 27 }  ,draw opacity=1 ]   (198,79) -- (218.2,113.79) -- (310,281) ;
\draw [color={rgb, 255:red, 208; green, 2; blue, 27 }  ,draw opacity=1 ]   (310,281) -- (422,81) ;
\draw  [color={rgb, 255:red, 74; green, 144; blue, 226 }  ,draw opacity=1 ] (269,204) .. controls (269,194.06) and (287.58,186) .. (310.5,186) .. controls (333.42,186) and (352,194.06) .. (352,204) .. controls (352,213.94) and (333.42,222) .. (310.5,222) .. controls (287.58,222) and (269,213.94) .. (269,204) -- cycle ;
\draw [color={rgb, 255:red, 208; green, 2; blue, 27 }  ,draw opacity=1 ] [dash pattern={on 0.84pt off 2.51pt}]  (160,9) -- (208.1,96.4) ;
\draw [color={rgb, 255:red, 208; green, 2; blue, 27 }  ,draw opacity=1 ] [dash pattern={on 0.84pt off 2.51pt}]  (422,81) -- (461,11) ;
\draw  [color={rgb, 255:red, 74; green, 144; blue, 226 }  ,draw opacity=1 ][dash pattern={on 4.5pt off 4.5pt}] (212.3,162.5) -- (556,162.5) -- (408.7,245.5) -- (65,245.5) -- cycle ;
\draw  [dash pattern={on 0.84pt off 2.51pt}]  (250,9) -- (310,281) ;
\draw  [fill={rgb, 255:red, 0; green, 0; blue, 0 }  ,fill opacity=1 ] (267,71) .. controls (267,69.34) and (265.66,68) .. (264,68) .. controls (262.34,68) and (261,69.34) .. (261,71) .. controls (261,72.66) and (262.34,74) .. (264,74) .. controls (265.66,74) and (267,72.66) .. (267,71) -- cycle ;
\draw  [fill={rgb, 255:red, 0; green, 0; blue, 0 }  ,fill opacity=1 ] (296,204) .. controls (296,202.34) and (294.66,201) .. (293,201) .. controls (291.34,201) and (290,202.34) .. (290,204) .. controls (290,205.66) and (291.34,207) .. (293,207) .. controls (294.66,207) and (296,205.66) .. (296,204) -- cycle ;
\draw  [fill={rgb, 255:red, 0; green, 0; blue, 0 }  ,fill opacity=1 ] (285,153) .. controls (285,151.34) and (283.66,150) .. (282,150) .. controls (280.34,150) and (279,151.34) .. (279,153) .. controls (279,154.66) and (280.34,156) .. (282,156) .. controls (283.66,156) and (285,154.66) .. (285,153) -- cycle ;

\draw (418,110) node [anchor=north west][inner sep=0.75pt]  [color={rgb, 255:red, 208; green, 2; blue, 27 }  ,opacity=1 ] [align=left] {$\calF^+_\kappa(\calY, \calE)$};
\draw (540,170) node [anchor=north west][inner sep=0.75pt]  [color={rgb, 255:red, 74; green, 144; blue, 226 }  ,opacity=1 ] [align=left] {$\calW(\calY, \calE)$};
\draw (352,202) node [anchor=north west][inner sep=0.75pt]  [color={rgb, 255:red, 74; green, 144; blue, 226 }  ,opacity=1 ] [align=left] {$\calW_\kappa(\calY, \calE)$};
\draw (260,142) node [anchor=north west][inner sep=0.75pt]   [align=left] {$F$};
\draw (300,192) node [anchor=north west][inner sep=0.75pt]   [align=left] {$\stoch(F)$};
\draw (216,3) node [anchor=north west][inner sep=0.75pt]   [align=left] {$[F]$};
\draw (266,63) node [anchor=north west][inner sep=0.75pt]   [align=left] {$G: \stoch(G) = \stoch(F)$};

\end{tikzpicture}
\end{center}
\caption{We can regard $\calW_\kappa(\calY, \calE)$ as a section of the cone $\calF^+_\kappa(\calY, \calE)$.}
\label{figure:cone-section}
\end{figure}

For $F \in \calF^+(\calY, \calE)$ the equivalence class (depicted on Figure~\ref{figure:cone-section})
\begin{equation*}
    [F] \eqdef \set{ G \in \calF^{+}(\calY, \calE) \colon \stoch(G) = \stoch(F) },
\end{equation*}
can be parametrized by the $\abs{\calY}$-dimensional positive orthant,\footnote{Without loss of generality, we can fix the $\ell_1$ norm of the right eigenvector. As a result, we obtain $\abs{\calY} - 1$ positive degrees of freedom from $v$ and an extra positive degree of freedom from the scalar $\rho$.}
\begin{equation*}
    [F] = \set{  \rho \diag(v) F \diag(v)^{-1} \colon \rho \in \bbR_+, v \in \bbR_+^{\calY}},
\end{equation*}
where $\diag(v)$ denotes the diagonal matrix with the entries of $v$ on its diagonal,
and the collection of all such rays generates the entire irreducible cone
\begin{equation*}
    \calF^+(\calY, \calE) = \biguplus_{P \in \calW(\calY, \calE)} [P].
\end{equation*}
While projecting onto stochastic matrices through $\stoch$-normalization preserves $\kappa$-lumpability, conjugation by an arbitrary diagonal matrix can disrupt the property. In fact, the following proposition holds.
\begin{proposition}[Closure under similarity transform]
\label{proposition:eigenvector-rescaling-constant-on-each-lumped-point}
    Let $v \in \bbR_+^{\calY}$. The following two statements are equivalent.
\begin{enumerate}[$(i)$]
    \item[\enumi] For any  irreducible lumpable matrix $F \in \calF_{ \kappa}^+(\calY, \calE)$, $v$ satisfies \begin{equation*}
\diag(v) F \diag(v)^{-1} \in \calF_{\kappa}^{+}(\calY, \calE).
    \end{equation*}
    \item[\enumii] 
    For any $x \in \calX$, $v$ satisfies  $v(y_1) = v(y_2)$ for any $y_1, y_2 \in \calS_{x}$.
\end{enumerate}
\end{proposition}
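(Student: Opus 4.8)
The plan is to establish the two implications separately. The direction \enumii $\Rightarrow$ \enumi is a short direct verification from the definition of $\calF_\kappa^+(\calY, \calE)$, whereas \enumi $\Rightarrow$ \enumii is proved cleanly by invoking the Perron--Frobenius characterization of Proposition~\ref{proposition:stochastic-rescaling-and-lumping-commute-and-partition-constant-eigenvector}\,\enumii, rather than by exhibiting an explicit lumpable matrix whose similarity transform fails to be lumpable.

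\textbf{\enumii $\Rightarrow$ \enumi.} Assuming $v$ is constant on every fibre, write $v \equiv w(x)$ on $\calS_x$ for some $w \colon \calX \to \bbR_+$. Fix $F \in \calF_\kappa^+(\calY, \calE)$ and set $G \eqdef \diag(v) F \diag(v)^{-1}$, that is $G(y,y') = v(y) F(y,y') v(y')^{-1}$. Since $v$ is strictly positive, $G$ has the same support as $F$, so $G \in \calF^+(\calY, \calE)$ and is irreducible. For $(x,x') \in \calD$ and $y \in \calS_x$ one computes
\begin{equation*}
    G(y, \calS_{x'}) = \sum_{y' \in \calS_{x'}} v(y)\, F(y,y')\, v(y')^{-1} = \frac{w(x)}{w(x')}\sum_{y' \in \calS_{x'}} F(y,y') = \frac{w(x)}{w(x')}\, F(y, \calS_{x'}),
\end{equation*}
which does not depend on $y \in \calS_x$ because $F$ is $\kappa$-lumpable; hence $G \in \calF_\kappa^+(\calY, \calE)$, which is \enumi.

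\textbf{\enumi $\Rightarrow$ \enumii.} Pick any $F \in \calF_\kappa^+(\calY, \calE)$ (this set is nonempty, as it contains $\calW_\kappa(\calY, \calE)$), and let $(\rho_F, v_F)$ be its right PF eigen-pair. By \enumi, $G \eqdef \diag(v) F \diag(v)^{-1}$ lies in $\calF_\kappa^+(\calY, \calE)$; let $(\rho_G, v_G)$ be its right PF eigen-pair. The identity $G\, \diag(v) v_F = \diag(v) F v_F = \rho_F\, \diag(v) v_F$, together with the strict positivity of $\diag(v) v_F$ and the uniqueness part of the Perron--Frobenius theorem, forces $\rho_G = \rho_F$ and $v_G = \lambda\, \diag(v) v_F$ for some $\lambda > 0$. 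Applying Proposition~\ref{proposition:stochastic-rescaling-and-lumping-commute-and-partition-constant-eigenvector}\,\enumii to $F$ and then to $G$, both $v_F$ and $v_G$ are constant on every $\calS_x$; hence for $x \in \calX$ and $y_1, y_2 \in \calS_x$ we get $\lambda v(y_1) v_F(y_1) = v_G(y_1) = v_G(y_2) = \lambda v(y_2) v_F(y_2)$, and dividing through by $\lambda v_F(y_1) = \lambda v_F(y_2) > 0$ gives $v(y_1) = v(y_2)$, which is \enumii.

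\textbf{Main obstacle.} The delicate direction is \enumi $\Rightarrow$ \enumii. The naive approach---assuming $v$ is not fibre-constant and constructing a lumpable $F$ whose conjugate $\diag(v) F \diag(v)^{-1}$ violates the Kemeny--Snell condition---runs into a fiddly combinatorial construction, since the witness must simultaneously have prescribed support $\calE$ and satisfy all row-block-sum constraints; moreover the candidate witnesses built from a single out-edge can accidentally satisfy lumpability. Routing instead through the transformation rule for Perron--Frobenius eigenvectors, $v_G \propto \diag(v) v_F$, together with the fibre-constancy of both eigenvectors, sidesteps this entirely. One minor point to flag is that the argument for \enumi $\Rightarrow$ \enumii uses nonemptiness of $\calF_\kappa^+(\calY, \calE)$, which is part of the standing setup.
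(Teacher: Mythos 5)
Your proof is correct and follows essentially the same route as the paper's: the forward direction is the same direct block-sum computation, and for \enumi $\Rightarrow$ \enumii both arguments identify the Perron--Frobenius eigenvector of $\diag(v)F\diag(v)^{-1}$ as $v \odot v_F$ and then apply Proposition~\ref{proposition:stochastic-rescaling-and-lumping-commute-and-partition-constant-eigenvector}--\enumii to both $F$ and its conjugate (the paper phrases this via the equality $\stoch(F) = \stoch(\diag(v)F\diag(v)^{-1})$, you verify the eigenvector identity directly, which is a purely cosmetic difference). Your remark about nonemptiness of $\calF_\kappa^+(\calY,\calE)$ is a fair observation that applies equally to the paper's own proof.
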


\begin{proof}
We first prove that \enumii implies \enumi. Let $F \in \calF_\kappa^+(\calY, \calE)$, write $F^\flat = \kappa_\star F $, let $(x,x') \in \calD = \kappa_2(\calE)$, and assume that $v$ takes constant values $v_x$ on $\calS_x$ and $v_{x'}$ on $\calS_{x'}$. Then for any $y \in \calS_x$,
    \begin{equation*}
    \begin{split}
        \sum_{y' \in \calS_{x'}} (\diag(v) F \diag(v)^{-1})(y, y') &= \frac{v_x}{v_{x'}} \sum_{y' \in \calS_{x'}} F(y, y') = \frac{v_x}{v_{x'}} F^\flat(x,x'),
    \end{split}
    \end{equation*}
    which does not depend on $y$,
    thus $\diag(v) F \diag(v)^{-1} \in \calF_\kappa^{+}(\calY, \calE)$. 
    Conversely, let us assume that \enumi holds, and let $F \in \calF_\kappa^+(\calY, \calE)$. We thus have $\diag(v) F \diag(v)^{-1} \in \calF_\kappa^{+}(\calY, \calE)$.
    Let $\rho \in \bbR, w \in \bbR_+^{\calY}$ be the unique right PF eigen-pair of $F$. 
    From  Proposition~\ref{proposition:stochastic-rescaling-and-lumping-commute-and-partition-constant-eigenvector}--\enumii, $w$ must be constant on each $\calS_x$,
    and it holds that,
    \begin{equation*}
        \begin{split}
            \stoch(F) &= \frac{1}{\rho} \diag(w)^{-1} F \diag(w) \\
            &= \frac{1}{\rho} \diag(v \odot w)^{-1} \diag(v) F \diag(v)^{-1} \diag(v \odot w) \\
            &= \stoch( \diag(v) F \diag(v)^{-1} ).
        \end{split}
    \end{equation*}
    It follows 
    from Proposition~\ref{proposition:stochastic-rescaling-and-lumping-commute-and-partition-constant-eigenvector}--\enumii
    that $v \hadamard w$ must be constant over each $\calS_x$, and so must $v$.
\end{proof}

In other words, the cone $\calF_\kappa^{+}(\calY, \calE)$ is closed with respect to similarity transform with positive vectors which are constant on the partition induced from $\kappa$.

\begin{corollary}[to Proposition~\ref{proposition:stochastic-rescaling-and-lumping-commute-and-partition-constant-eigenvector}]
\label{corollary:geometric-intersection-interpretation}
    Let $F \in \calF^+(\calY, \calE)$. Then $\stoch(F) \in \calW_\kappa(\calY, \calE)$ if and only if $$[F] \cap \calF_\kappa^{+}(\calY, \calE) \neq \emptyset.$$
\end{corollary}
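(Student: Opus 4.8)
The plan is to prove both implications by directly unwinding the definitions of the ray $[F]$ and of the cone $\calF_\kappa^+(\calY,\calE)$, using only the commutativity already recorded in Proposition~\ref{proposition:stochastic-rescaling-and-lumping-commute-and-partition-constant-eigenvector}--\enumi together with two elementary observations. First, $\stoch$ acts as the identity on $\calW(\calY,\calE)$: a stochastic matrix has Perron--Frobenius root $1$ with constant right eigenvector, so $\stoch(P)=P$, and in particular $\stoch(\stoch(F)) = \stoch(F)$. Second, a stochastic matrix $P$, viewed as an element of $\calF^+(\calY,\calE)$, lies in $\calF_\kappa^+(\calY,\calE)$ if and only if $P \in \calW_\kappa(\calY,\calE)$, since the defining condition $F(y_1,\calS_{x'})=F(y_2,\calS_{x'})$ of $\calF_\kappa$ is literally the Kemeny--Snell condition \eqref{eq:kemeny-snell-condition}. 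Together these give the identification $\calW_\kappa(\calY,\calE) = \calW(\calY,\calE)\cap \calF_\kappa^+(\calY,\calE)$, which is really the heart of the statement.

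For the forward implication, I would assume $\stoch(F)\in\calW_\kappa(\calY,\calE)$. Then $\stoch(\stoch(F)) = \stoch(F)$ shows $\stoch(F)\in[F]$, and $\stoch(F)\in\calW_\kappa(\calY,\calE)\subset\calF_\kappa^+(\calY,\calE)$ shows that $\stoch(F)$ itself is a point of $[F]\cap\calF_\kappa^+(\calY,\calE)$, so the intersection is nonempty. For the converse, I would take any $G\in[F]\cap\calF_\kappa^+(\calY,\calE)$: the membership $G\in[F]$ gives $\stoch(G)=\stoch(F)$, while $G\in\calF_\kappa^+(\calY,\calE)$ together with Proposition~\ref{proposition:stochastic-rescaling-and-lumping-commute-and-partition-constant-eigenvector}--\enumi gives $\stoch(G)\in\calW_\kappa(\calY,\calE)$; hence $\stoch(F)=\stoch(G)\in\calW_\kappa(\calY,\calE)$.

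There is no real obstacle here — the statement is a genuine corollary of Proposition~\ref{proposition:stochastic-rescaling-and-lumping-commute-and-partition-constant-eigenvector}. The only point that warrants a sentence of care is the identification $\calW_\kappa(\calY,\calE)=\calW(\calY,\calE)\cap\calF_\kappa^+(\calY,\calE)$ used in the forward direction, i.e.\ checking that lumpability of a stochastic matrix coincides with lumpability as a function; once that is noted, the rest is a two-line chase through the definitions.
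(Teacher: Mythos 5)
Your proof is correct and follows essentially the same route as the paper: in the forward direction you exhibit $\stoch(F)$ itself as an element of $[F]\cap\calF_\kappa^{+}(\calY,\calE)$, and in the converse you apply Proposition~\ref{proposition:stochastic-rescaling-and-lumping-commute-and-partition-constant-eigenvector}--\enumi to a witness $G\in[F]\cap\calF_\kappa^{+}(\calY,\calE)$ and use $\stoch(G)=\stoch(F)$. Your explicit remarks that $\stoch$ is idempotent on stochastic matrices (so $\stoch(F)\in[F]$) and that $\calW_\kappa(\calY,\calE)=\calW(\calY,\calE)\cap\calF_\kappa^{+}(\calY,\calE)$ make precise two points the paper leaves implicit, but they do not change the argument.
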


\begin{proof}
    Suppose first that $\stoch(F) \in \calW_\kappa(\calY, \calE)$. Clearly, $F \in [F]$, and since $\calW_\kappa(\calY, \calE) \subset \calF_\kappa^{+}(\calY, \calE)$ it is immediate that $\stoch(F) \in [F] \cap \calF_\kappa^{+}(\calY, \calE) \neq \emptyset$.
    Conversely, suppose that there exists $G \in [F]$ such that $G \in \calF_\kappa^+(\calY, \calE)$. Then, by Proposition~\ref{proposition:stochastic-rescaling-and-lumping-commute-and-partition-constant-eigenvector}--\enumi, $\stoch(G) \in \calW_\kappa(\calY, \calE)$, but $\stoch(G) = \stoch(F)$ since both are in $[F]$, thus also $\stoch(F) \in \calW_\kappa(\calY, \calE)$.
\end{proof}

The notion of a merging block, which we now introduce, will be instrumental to our analysis.

\begin{definition}[Merging block]
\label{definition:merging-block}
Let $(x, x') \in \calD = \kappa_2(\calE)$ be such that for some $y \in \calS_x$ it holds that
\begin{equation}
\label{eq:merging-block-condition}
    \abs{\set{(y,y') \in \calE \colon y' \in \calS_{x'}}} > 1.
\end{equation}
Then we say that $(x,x')$ is a merging block of $(\calY, \calE)$ with respect to $\kappa$.
Furthermore, when 
$\abs{\calS_x} \geq 2$, we say that the merging block is multi-row (note that \eqref{eq:merging-block-condition} need not hold for every $y \in \calS_x$).
\end{definition}

It will be convenient to define
\begin{equation*}
    \calG_\kappa(\calY, \calE) \eqdef \set{ \log F \colon F \in \calF^+_\kappa(\calY, \calE) },
\end{equation*}
where we recall that the logarithm is understood to be entry-wise.

\begin{theorem}[Log-affinity of $\calF_\kappa^{+}(\calY, \calE)$]
\label{theorem:characterization-log-linearity}
The following two statements are equivalent.

\begin{enumerate}[$(i)$]
    \item[\enumi]
$(\calY, \calE)$ has no multi-row merging block with respect to $\kappa$ (Definition~\ref{definition:merging-block}). 
\item[\enumii] $\calG_\kappa(\calY, \calE)$ is affine.
\end{enumerate}
\end{theorem}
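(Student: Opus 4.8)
The plan is to exploit that $\calF^+_\kappa(\calY,\calE)$ is the intersection of the positive orthant $\calF^+(\calY,\calE)$ with the solution set of the linear Kemeny--Snell system $F(y_1,\calS_{x'}) = F(y_2,\calS_{x'})$ indexed by $(x,x')\in\calD$ and $y_1,y_2\in\calS_x$, and then to ask which of these equations survive as \emph{linear} equations after applying the entrywise logarithm. I will use throughout that $\exp$ and $\log$ are mutually inverse bijections between $\calF(\calY,\calE)$ and $\calF^+(\calY,\calE)$, so that $g\in\calG_\kappa(\calY,\calE)$ if and only if $\exp(g)\in\calF^+_\kappa(\calY,\calE)$, together with the standing assumption $\calF^+_\kappa(\calY,\calE)\ne\emptyset$ (equivalently $\calW_\kappa(\calY,\calE)\ne\emptyset$).

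First I would treat \enumi $\Rightarrow$ \enumii. If $(\calY,\calE)$ has no multi-row merging block, then every merging block $(x,x')$ has $\abs{\calS_x}=1$ and contributes no constraint, whereas for a non-merging block $(x,x')\in\calD$ every $y\in\calS_x$ has at most one out-edge into $\calS_{x'}$; combined with $\calF^+_\kappa(\calY,\calE)\ne\emptyset$ this forces each $y\in\calS_x$ to have \emph{exactly} one such out-edge $e_y$ (if some $y^*\in\calS_x$ had none while $(x,x')\in\calD$, then for a vertex $y_0\in\calS_x$ that does have an out-edge the equation $F(y_0,\calS_{x'})=F(y^*,\calS_{x'})$ would read $F(e_{y_0})=0$, impossible for $F\in\calF^+_\kappa(\calY,\calE)$). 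Hence every nontrivial equation of the system is a coordinate equality $F(e_{y_1})=F(e_{y_2})$, so $\calF^+_\kappa(\calY,\calE)=\exp(L)$ with $L\eqdef\set{g\in\calF(\calY,\calE)\colon g(e_{y_1})=g(e_{y_2}) \text{ for all such pairs}}$ a linear subspace; therefore $\calG_\kappa(\calY,\calE)=\log(\exp(L))=L$ is affine.

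Next I would prove the contrapositive of \enumii $\Rightarrow$ \enumi. Assuming a multi-row merging block $(x,x')$, so $\abs{\calS_x}\ge 2$ and some $y_1\in\calS_x$ carries two distinct out-edges $e_1,e_2$ into $\calS_{x'}$, I pick $y_2\in\calS_x\setminus\set{y_1}$, fix any $F\in\calF^+_\kappa(\calY,\calE)$, and set $s\eqdef F(e_1)+F(e_2)$. For $t\in(0,s)$ let $F_t$ agree with $F$ off $\set{e_1,e_2}$ and satisfy $F_t(e_1)=t$, $F_t(e_2)=s-t$; since $e_1,e_2$ occur only as out-edges of $y_1$ and $F_t(y_1,\calS_{x'})=F(y_1,\calS_{x'})$ is unchanged, every Kemeny--Snell equation still holds, so $F_t\in\calF^+_\kappa(\calY,\calE)$. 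Now fix $a\ne b$ in $(0,s)$ and let $H$ be the entrywise geometric mean $H(e)=\sqrt{F_a(e)F_b(e)}$, so that $\log H=\tfrac12(\log F_a+\log F_b)$ while $\log F_a,\log F_b\in\calG_\kappa(\calY,\calE)$. Since $H$ agrees with $F$ off $\set{e_1,e_2}$, with $H(e_1)=\sqrt{ab}$ and $H(e_2)=\sqrt{(s-a)(s-b)}$, Cauchy--Schwarz gives $\sqrt{ab}+\sqrt{(s-a)(s-b)}<s$ (strictly, as $a\ne b$), hence $H(y_1,\calS_{x'})<F(y_1,\calS_{x'})=F(y_2,\calS_{x'})=H(y_2,\calS_{x'})$; thus $H\notin\calF^+_\kappa(\calY,\calE)$, i.e.\ $\tfrac12(\log F_a+\log F_b)\notin\calG_\kappa(\calY,\calE)$. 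So $\calG_\kappa(\calY,\calE)$ fails to be midpoint-convex, hence is not affine.

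The one delicate step is this last construction: the art is to localize the perturbation inside a single multi-row merging block while \emph{keeping the row-sum $F(y_1,\calS_{x'})$ fixed}, which is precisely what makes all Kemeny--Snell equations hold for each $F_t$, while forcing exactly the one cross-row equation between $y_1$ and $y_2$ to fail for the geometric mean $H$; once that is set up, non-affineness is nothing more than the strict concavity of the geometric mean (the equality case of Cauchy--Schwarz). The other point requiring explicit justification is the ``exactly one out-edge'' claim in the forward direction, since it is what collapses the surviving equations to mere coordinate identifications and hence makes $\calG_\kappa(\calY,\calE)$ literally a linear subspace rather than merely an affine one.
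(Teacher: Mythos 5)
Your proof is correct and rests on essentially the same two ideas as the paper's: in the forward direction, the absence of multi-row merging blocks collapses each Kemeny--Snell row-sum constraint to a single-coordinate identity (you phrase this as $\calF^+_\kappa(\calY,\calE)=\exp(L)$ for a linear subspace $L$, while the paper directly verifies closure under entrywise geometric combination---the same observation), and in the reverse direction, strict AM--GM/Cauchy--Schwarz applied to two entries in a merging row. The one substantive difference is the choice of witness in the reverse direction: you perturb an arbitrary $F\in\calF^+_\kappa(\calY,\calE)$ locally inside the merging block with the offending row sum held fixed, whereas the paper constructs two explicit stochastic matrices $P_{a,b},P_{b,a}\in\calW_\kappa(\calY,\calE)$ that are uniform outside the block. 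Your version is lighter and fully suffices for the theorem as stated; the paper's heavier construction establishes the ``somewhat stronger'' claim that the witnesses can be taken to be stochastic matrices whose lumped rows are uniform off the merging block, and it is precisely this extra structure that gets reused verbatim in the proof of the redundant-merging-block criterion (Theorem~\ref{theorem:redundant-merging-block-criterion}). A minor plus: your explicit argument that every row of a non-merging block must carry exactly one out-edge (via nonvacuousness of $\calF^+_\kappa(\calY,\calE)$) makes precise a step the paper leaves implicit when it invokes ``the unique element'' $u'_{x'}(y)$.
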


\begin{proof}
Let us first show that \enumi implies \enumii.
    Let $F_0, F_1 \in \calF_\kappa^{+}(\calY, \calE)$, let $t \in \bbR$, and write $F_0^\flat = \kappa_\star F_0, F_1^\flat = \kappa_\star F_1$. For $(y,y') \in \calY^2$, we define $F_t(y,y') = F_0(y,y')^{1 - t} F_1(y,y')^t$.
Let $(x,x') \in \calD$. We need to show that for any $y_1, y_2 \in \calS_{x}$, we have $F_t(y_1, \calS_{x'}) = F_t(y_2, \calS_{x'})$. The case $\abs{\calS_x} = 1$ is trivial and it remains to inspect the case $\abs{\calS_x} \geq 2$. Since $(x,x')$ is non-merging, for any $y \in \calS_{x}$ we denote $u'_{x'}(y)$ the unique element in $\calS_{x'}$ such that $(y,u'_{x'}(y)) \in \calE$. For any $y \in \calS_x$, it then holds that
\begin{equation*}
\begin{split}
    \sum_{y' \in \calS_{x'}} F_t(y,y') &= \sum_{y' \in \calS_{x'}} F_0(y,y')^{1 -t} F_1(y,y')^{t} \\ &= F_0(y,u'_{x'}(y))^{1-t} F_1(y,u'_{x'}(y))^{t} \\
    &= \left(\sum_{y'\in \calS_{x'}} F_0(y,y') \right)^{1-t} \left( \sum_{y' \in \calS_{x'}} F_1(y,y') \right)^{t}
    = F_0^\flat(x,x') ^{1-t}   F_1^\flat(x,x')^{t},
\end{split}
\end{equation*}
which does not depend on $y$, thus $F_t \in \calF_\kappa^+(\calY, \calE)$.
Conversely, to show that \enumii implies \enumi, suppose that $(\calY, \calE)$ has a multi-row merging block with respect to $\kappa$. We show the somewhat stronger claim that there exist two stochastic matrices $P_0, P_1 \in \calW_\kappa(\calY, \calE)$ and $t \in \bbR$ such that $P_0^{\hadamard (1 - t)} \hadamard P_1^{\hadamard t} \not \in \calF_{\kappa}^{+}(\calY, \calE)$.
    We let $(x_0,x_0') \in \calD$ be  a merging block of $(\calY, \calE)$, and  denote $y_0, y_\star \in \calS_{x_0}, y'_a, y'_b \in \calS_{x_0'}$ such that $y_0 \neq y_\star$, $y'_a \neq y'_b$ and $(y_\star, y'_a), (y_\star, y'_b) \in \calE$.
    Let $\eta_a, \eta_b \in \bbR_+$ with $\eta_a < \eta_b < 1$ and such that 
    $$\eta_a + \eta_b = 2\abs{ \set{\overline{x} \in \calX \colon (x_0, \overline{x}) \in \calD }}^{-1} \abs{ \set{ \overline{y} \in \calS_{x_0'} \colon (y_\star,\overline{y}) \in \calE }}^{-1}.$$
    We construct,
    \begin{equation*}
        P_{a,b}(y,y') = \begin{cases}     
            0 &\text{when } (y, y') \not \in \calE \\
            \eta_a &\text{when } (y,y') = (y_\star, y'_a) \\
            \eta_b &\text{when } (y,y') = (y_\star, y'_b) \\
            \cfrac{1}{\abs{ \set{\overline{x} \in \calX \colon (\kappa(y), \overline{x}) \in \calD }} \abs{ \set{ \overline{y} \in \calS_{\kappa(y')} \colon (y,\overline{y}) \in \calE }}} &\text{otherwise},
        \end{cases}
    \end{equation*}
    so that $\kappa_\star P_{a,b}(x,\cdot)$ is the uniform distribution on $\{\overline{x} \in \calX: (x,\overline{x}) \in \calD \}$.
    We define $P_0, P_1 \in \calW_\kappa(\calY, \calE)$ as $P_0 = P_{a,b}$ and $P_1 = P_{b,a}$,
    and construct the combination $\widetilde{P} = P_0^{\hadamard 1/2} \hadamard P_1^{\hadamard 1/2}$.
    By the AM-GM inequality $2 \sqrt{\eta_a \eta_b} < \eta_a + \eta_b$, and it follows that
    \begin{equation*}
        \sum_{\overline{y} \in \calS_{x_0'}} \widetilde{P}(y_\star, \overline{y}) < \sum_{\overline{y} \in \calS_{x_0'}} \widetilde{P}(y_0, \overline{y})
    \end{equation*}
    hence $\widetilde{P}_{1/2}$ is not $\kappa$-lumpable.
\end{proof}

In words, the log-affinity of the cone of positive lumpable functions is characterized by the absence of multi-row merging blocks, providing a solution to our simplified problem. In the next section, we will see that the answer to the original problem, with $\stoch$-normalization, is more involved.

\section{Characterizing of e-families of lumpable stochastic matrices}
\label{section:towards-characterization}
In this section, we build upon the results of Section~\ref{section:lumpable-cone} to analyze the original problem of characterizing lumpable e-families. Let us assume that $1 < \abs{\calX} < \abs{\calY}$ and that $\calW_\kappa(\calY, \calE) \neq \emptyset$.

\subsection{Sufficient combinatorial conditions}
\label{section:sufficient-condition}

\subsubsection{No multi-row merging block criterion}

The first criterion is a natural consequence of Theorem~\ref{theorem:characterization-log-linearity}.

\begin{corollary}[No multi-row merging block criterion]
\label{corollary:no-multi-row-merging-block-is-sufficient}
If $(\calY, \calE)$ has no multi-row merging block with respect to $\kappa$, then $\calW_\kappa(\calY, \calE)$ forms an e-family.
\end{corollary}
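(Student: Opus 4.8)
The plan is to reduce the claim directly to Theorem~\ref{theorem:characterization-log-linearity} together with the characterization of e-families via linear subspaces of $\calG(\calY, \calE)$ recorded after Definition~\ref{definition:e-family}. The starting observation is that if $(\calY, \calE)$ has no multi-row merging block with respect to $\kappa$, then by Theorem~\ref{theorem:characterization-log-linearity} the set $\calG_\kappa(\calY, \calE) = \set{ \log F \colon F \in \calF^+_\kappa(\calY, \calE) }$ is an affine subset of $\calF(\calY, \calE)$. Write $\calA = \calG_\kappa(\calY, \calE)$, pick a base point $K_0 \in \calA$, and let $V \eqdef \calA - K_0$ be the associated linear subspace of $\calF(\calY, \calE)$, so that $\calA = K_0 + V$; then $\calF^+_\kappa(\calY, \calE) = \exp(K_0 + V) = \set{ \exp(K_0 + H) \colon H \in V }$.

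The second step is to descend from the cone $\calF^+_\kappa(\calY, \calE)$ to the stochastic slice $\calW_\kappa(\calY, \calE)$ via $\stoch$-normalization. By Proposition~\ref{proposition:stochastic-rescaling-and-lumping-commute-and-partition-constant-eigenvector}, $\stoch$ maps $\calF^+_\kappa(\calY, \calE)$ onto $\calW_\kappa(\calY, \calE)$ (surjectivity because $\calW_\kappa(\calY, \calE) \subset \calF^+_\kappa(\calY, \calE)$ and $\stoch$ fixes stochastic matrices). Hence
\begin{equation*}
    \calW_\kappa(\calY, \calE) = \stoch\bigl(\calF^+_\kappa(\calY, \calE)\bigr) = \set{ \stoch \circ \exp(K_0 + H) \colon H \in V }.
\end{equation*}
Now let $V'$ be the image of $V$ under the quotient map $\calF(\calY, \calE) \to \calG(\calY, \calE) = \calF(\calY, \calE)/\calN(\calY, \calE)$, and choose a basis $G_1, \dots, G_d$ of $V'$ (here $d = \dim V'$). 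Since $\stoch \circ \exp$ factors through $\calG(\calY, \calE)$ — indeed $\stoch \circ \exp(N + H) = \stoch \circ \exp(H)$ for any $N \in \calN(\calY, \calE)$, which is precisely why $\calN(\calY, \calE)$ is quotiented out in Definition~\ref{definition:e-family} — every element $\stoch \circ \exp(K_0 + H)$ with $H \in V$ depends only on the class of $H$ in $V'$, and conversely every point of $K_0 + V'$ (as a coset-valued expression) is attained. Therefore
\begin{equation*}
    \calW_\kappa(\calY, \calE) = \set{ \stoch \circ \exp\Bigl( K_0 + \textstyle\sum_{i=1}^{d} \theta^i G_i \Bigr) \colon \theta \in \bbR^d },
\end{equation*}
which is exactly the form required by Definition~\ref{definition:e-family}, with $K = K_0$ and natural parameters $\theta^1, \dots, \theta^d$. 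Hence $\calW_\kappa(\calY, \calE)$ is an e-family.

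The step I expect to require the most care is the reduction modulo $\calN(\calY, \calE)$: one must check that passing from the linear space $V \subset \calF(\calY, \calE)$ to its quotient image $V'$ neither loses any element of $\calW_\kappa(\calY, \calE)$ nor introduces a spurious dependence — i.e. that the parametrization of $\calW_\kappa(\calY, \calE)$ by $\theta \in \bbR^d$ is genuinely onto the lumpable family and that the $G_i$ can be taken linearly independent in $\calG(\calY, \calE)$. This is where one invokes that $\stoch \circ \exp \colon \calG(\calY, \calE) \to \calW(\calY, \calE)$ is a diffeomorphism \citep[Theorem~2]{nagaoka2005exponential}, so a subset of $\calW(\calY, \calE)$ is an e-family precisely when its preimage under this diffeomorphism is an affine subspace of $\calG(\calY, \calE)$; the affinity of $\calG_\kappa(\calY, \calE)$ from Theorem~\ref{theorem:characterization-log-linearity} delivers exactly that, since the preimage of $\calW_\kappa(\calY, \calE)$ is the image of the affine set $\calG_\kappa(\calY, \calE)$ in the quotient $\calG(\calY, \calE)$, which remains affine. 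Everything else is bookkeeping.
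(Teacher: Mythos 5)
Your proof is correct and rests on exactly the same two ingredients as the paper's: Theorem~\ref{theorem:characterization-log-linearity} for the affinity of $\calG_\kappa(\calY,\calE)$ and Proposition~\ref{proposition:stochastic-rescaling-and-lumping-commute-and-partition-constant-eigenvector}--\enumi to descend through $\stoch$-normalization. The only difference is cosmetic: the paper verifies closure under e-geodesics pointwise and cites the geodesic characterization \citep[Corollary~3]{nagaoka2005exponential}, whereas you build the explicit global parametrization and cite the affine-subspace characterization \citep[Theorem~2]{nagaoka2005exponential}; these are equivalent routes to the same conclusion.
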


\begin{proof}
    Let $P_0, P_1 \in \calW_\kappa(\calY, \calE)$ and $t \in \bbR$. Defining $\widetilde{P}_t = P_0^{\hadamard (1 - t)} \hadamard P_1^{\hadamard t}$, it follows from Theorem~\ref{theorem:characterization-log-linearity} that $\widetilde{P}_t \in \calF_{\kappa}^{+}(\calY, \calE)$.
    It is then a consequence of Proposition~\ref{proposition:stochastic-rescaling-and-lumping-commute-and-partition-constant-eigenvector}--\enumi that $\stoch(\widetilde{P}_t) \in \calW_\kappa(\calY, \calE)$. Since $P_0, P_1$ and $t$ were arbitrary, \citet[Corollary~3]{nagaoka2005exponential} implies that $\calW_\kappa(\calY, \calE)$ forms an e-family.
\end{proof}

\begin{example}[Hudson expansion {\citep{kemeny1983finite}}]
Let $(\calX, \calD)$ be a strongly connected graph. 
For a Markov chain $X_1, X_2, \dots$ sampled according to a transition matrix $P \in \calW(\calX, \calD)$, recall that the sliding window chain
$$(X_1, X_2), (X_2, X_3), \dots, (X_t, X_{t+1}), \dots$$ is also a Markov chain with transition matrix $P^\sharp \in \calW_h(\calY, \calE)$, with state space $\calY = \calD$, edge set
\begin{equation*}
    \calE = \set{ (e = (x_1, x_2),e' = (x_1', x_2')) \in \calD^2 \colon x_2 = x_1' },
\end{equation*} 
and lumping function $h \colon \calY \to \calX, (x_1, x_2) \mapsto x_2$.
One can verify that $(\calY, \calE)$ has no multi-row merging block with respect to $h$.
As a result of Corollary~\ref{corollary:no-multi-row-merging-block-is-sufficient}, $\calW_{h}(\calY, \calE)$ thus forms an e-family of lumpable Markov chains.
We therefore recover the known fact that the Hudson expansion of the first-order Markov chains forms an e-subfamily of second-order Markov chains.
Note that, since the Hudson expansion is known to be a particular case of a Markov embedding \citep{wolfer2024geometric}, and Markov embeddings are known to preserve e-families of stochastic matrices, the claim also follows from an embedding argument.
\end{example}

\subsubsection{Lazy-cycle criterion}

 Whilst the condition in Corollary~\ref{corollary:no-multi-row-merging-block-is-sufficient} is also necessary for state spaces of size at most three (refer to Section~\ref{section:classification}), it is no longer necessary for larger state spaces, as demonstrated in this section. This is a consequence of the fact that constructing an e-geodesic further involves $\stoch$-normalization which can return curves to the lumpable set. The following proposition states that when all diagonal blocks are diagonal and there is exactly one non-zero off-diagonal block per block-line, the lumpable family is exponential---regardless of the off-diagonal blocks being multi-row merging.

\begin{proposition}[Lazy cycle criterion]
    \label{proposition:lazy-cycle-criterion}
    If any of the following two equivalent conditions are satisfied, then $\calW_{\kappa}(\calY, \calE)$ forms an e-family.
    \begin{enumerate}
        \item[\enumi] For any $P \in \calW_{\kappa}(\calY, \calE)$, there exist a pair of non-negative matrices $D$ and $\Pi$ such that
    \begin{equation*}
        P = D + \Pi,
    \end{equation*}
    where $\kappa_\star \Pi$ is a permutation matrix over $\calX$ and $D$ is diagonal.
    \item[\enumii] For any $(y,y') \in \calE, \kappa(y) = \kappa(y')$ implies that $ y = y'$ and the graph $$\left( \calX,   \set{ (\kappa(y),\kappa(y')) \colon (y,y') \in \calE, \kappa(y) \neq \kappa(y')}  \right)$$ is a cycle.
    \end{enumerate}
\end{proposition}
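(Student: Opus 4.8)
The plan is to prove the equivalence of $\enumi$ and $\enumii$ by unpacking the two descriptions directly, and then to show that $\enumii$ forces $\calW_\kappa(\calY,\calE)$ to be an e-family by invoking the e-geodesic closure criterion of \citet[Corollary~3]{nagaoka2005exponential}, in the same manner as the proof of Corollary~\ref{corollary:no-multi-row-merging-block-is-sufficient}. The essential difference from that corollary is that here the interpolant $F_t=P_0^{\hadamard(1-t)}\hadamard P_1^{\hadamard t}$ need \emph{not} lie in $\calF_\kappa^+(\calY,\calE)$ --- condition $\enumii$ explicitly permits multi-row merging blocks, so Theorem~\ref{theorem:characterization-log-linearity} is unavailable --- and the crux is that the subsequent $\stoch$-normalization returns $F_t$ to the lumpable set.

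For the equivalence, I would first restate $\enumii$ structurally: ``$\kappa(y)=\kappa(y')\Rightarrow y=y'$'' says the only intra-block edges of $\calE$ are self-loops, so for any $Q\in\calW(\calY,\calE)$ and $y\in\calS_x$ one has $Q(y,\calS_x)=Q(y,y)$; and the cycle hypothesis produces a single fixed-point-free $\abs{\calX}$-cycle $\sigma\colon\calX\to\calX$ such that every inter-block edge runs from some $\calS_x$ into $\calS_{\sigma(x)}$, so that from $\calS_x$ no block other than $\calS_x$ and $\calS_{\sigma(x)}$ is reachable. Given $P\in\calW_\kappa(\calY,\calE)$, letting $D$ be the diagonal part of $P$ and $\Pi=P-D$ yields non-negative matrices with $D$ diagonal, $\Pi$ lumpable (a one-line Kemeny--Snell check), and $\kappa_\star\Pi(x,x')$ equal to $1-P(y,y)$ when $x'=\sigma(x)$ and to $0$ otherwise; thus $\kappa_\star\Pi$ is a positive diagonal rescaling of the permutation matrix of $\sigma$, giving $\enumii\Rightarrow\enumi$ (reading ``permutation matrix'' as having permutation support; the literal $0/1$ version is a purely cosmetic bookkeeping point). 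Conversely, from a decomposition $P=D+\Pi$ as in $\enumi$, the permutation structure of $\kappa_\star\Pi$ forbids any nonzero diagonal entry --- otherwise every $P$-row indexed by $\calS_x$ would be supported within $\calS_x$, making $\calS_x$ a closed class and contradicting irreducibility --- so $\Pi(y,\calS_x)=0$ for $y\in\calS_x$, whence $\calE$ has no intra-block off-diagonal edge; and the single nonzero per row of $\kappa_\star\Pi$ together with strong connectivity of $(\calX,\calD)$ forces the inter-block graph to be a single cycle, recovering $\enumii$.

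The heart of the argument is a characterization of $\calW_\kappa(\calY,\calE)$ under $\enumii$ that I would isolate as a lemma: a matrix $Q\in\calW(\calY,\calE)$ lies in $\calW_\kappa(\calY,\calE)$ if and only if $y\mapsto Q(y,y)$ is constant on each $\calS_x$. The forward direction is the Kemeny--Snell condition \eqref{eq:kemeny-snell-condition} with $x'=x$, using $Q(y,\calS_x)=Q(y,y)$; for the converse, fixing $x$ and $y_1,y_2\in\calS_x$, the only $x'$ with $(x,x')\in\calD$ are $x'=x$ (where $Q(y_i,\calS_x)=Q(y_i,y_i)$) and $x'=\sigma(x)$ (where $Q(y_i,\calS_{\sigma(x)})=1-Q(y_i,y_i)$, since those are the only two reachable blocks), so both sides are block-constant by hypothesis. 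Granting the lemma, let $P_0,P_1\in\calW_\kappa(\calY,\calE)$, $t\in\bbR$, and $F=P_0^{\hadamard(1-t)}\hadamard P_1^{\hadamard t}\in\calF^+(\calY,\calE)$ with right PF eigen-pair $(\rho,v)$. Since $\stoch$-normalization leaves diagonal entries fixed up to the scalar $\rho$ --- for a self-loop $(y,y)$, $\stoch(F)(y,y)=F(y,y)\,v(y)/(\rho\,v(y))=F(y,y)/\rho=P_0(y,y)^{1-t}P_1(y,y)^{t}/\rho$ --- and since by \eqref{eq:kemeny-snell-condition} applied to $P_0$ and $P_1$ the quantities $P_0(y,y)$ and $P_1(y,y)$ depend on $y$ only through $\kappa(y)$, the diagonal of $\stoch(F)$ is constant on each $\calS_x$. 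By the lemma, $\stoch(F)\in\calW_\kappa(\calY,\calE)$, and as $P_0,P_1,t$ were arbitrary, \citet[Corollary~3]{nagaoka2005exponential} yields that $\calW_\kappa(\calY,\calE)$ is an e-family.

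The step I expect to be the main obstacle is precisely that one cannot reduce to Theorem~\ref{theorem:characterization-log-linearity}: the geometric interpolant generically leaves $\calF_\kappa^+(\calY,\calE)$, so the $\stoch$-normalization step --- a harmless final clean-up in the proof of Corollary~\ref{corollary:no-multi-row-merging-block-is-sufficient} --- must here genuinely restore lumpability. The real content is to identify the invariant that survives normalization, namely block-constancy of the self-loop entries, which is exactly what the diagonal-fixing behaviour of $\stoch$ preserves; once the characterization lemma is in place, the rest is routine.
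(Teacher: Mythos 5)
Your proof is correct, but it takes a genuinely different route from the paper's. The paper first reduces to the special case where all diagonal blocks vanish and the off-diagonal blocks have full support; in that case every row of the normalized interpolant is supported in a single block $\calS_{\sigma(x)}$, so the block sum equals the full row sum, which is $1$, and lumpability is automatic from row-stochasticity. The general case is then recovered by invoking monotonicity (Theorem~\ref{theorem:monotonicity}) and stability under diagonal modification (Proposition~\ref{proposition:stability-diagonal-modification}) --- both of which are forward references to Section~\ref{section:monotonicity}. Your argument instead handles the general case directly through the characterization lemma: under \enumii, a matrix $Q\in\calW(\calY,\calE)$ is $\kappa$-lumpable if and only if $y\mapsto Q(y,y)$ is constant on each $\calS_x$, because the only two reachable blocks from $\calS_x$ are $\calS_x$ itself (via self-loops) and $\calS_{\sigma(x)}$ (carrying mass $1-Q(y,y)$). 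Combined with the observation that $\stoch$-normalization acts on diagonal entries only through division by $\rho_F$ (the eigenvector factors cancel), this isolates exactly the invariant that survives normalization and yields a self-contained proof with no dependence on the later sections; it also supplies the \enumi $\Leftrightarrow$ \enumii equivalence, which the paper's proof does not spell out. What the paper's route buys is brevity of the core step; what yours buys is logical self-containment and an explicit structural reason why normalization restores lumpability. Your remark that ``$\kappa_\star\Pi$ is a permutation matrix'' in \enumi should be read as ``has permutation support'' is also well taken: with nonzero self-loops the entry at $(x,\sigma(x))$ is $1-P(y,y)<1$, so the literal $0/1$ reading would contradict the equivalence with \enumii (and with Example~\ref{example:e-family-two-merging-block}); this is an imprecision in the statement, not in your argument.
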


\begin{proof}
We first prove the statement in the special case where all diagonal blocks vanish, that is $\{ (x,x) \colon x \in \calX \} \cap \calD = \emptyset$, and when the off-diagonal blocks have full support, that is for any $(x,x') \in \calD$ with $x \neq x'$, $\calS_{x} \times \calS_{x'} \subset \calE$. An application of monotonicity (refer to Theorem~\ref{theorem:monotonicity}) generalizes the result beyond full-sport off-diagonal blocks, and stability by diagonal modifications (refer to Proposition~\ref{proposition:lazy-cycle-criterion}) yield the more general case where there are non vanishing diagonal blocks on the diagonal.
We further reduce the problem by observing that since the lumped matrix is irreducible, $\Pi^\flat = \kappa_\star \Pi$ defines a cycle.
Finally, it will be convenient to order states $\calX = \set{1, \dots, \abs{\calX}}$ and $\calY = \set{1, \dots, \abs{\calY}}$.
As a result, upon relabeling, we henceforth assume that the family can be represented by
\begin{equation*}
\left(\begin{tblr}{c|[dashed]c|[dashed]c|[dashed]c|[dashed]c}
  0 & \boxplus_{\calS_{1} \times \calS_{2}} & 0 & \hdots & 0\\\hline[dashed]
  0 & 0 & \boxplus_{\calS_{2} \times \calS_{3}} & \ddots & \vdots \\\hline[dashed]
  \vdots & & \ddots & \ddots & 0 \\\hline[dashed]
  0 & & & 0 & \boxplus_{\calS_{\abs{\calX} - 1} \times \calS_{\abs{\calX}}} \\\hline[dashed]
  \boxplus_{\calS_{\abs{\calX}} \times \calS_{1}} & 0 & \hdots & 0 & 0 
\end{tblr}\right)
\end{equation*}
where for $\calS, \calS' \subset \calY$, $\boxplus_{\calS \times \calS'}$ is $\abs{\calS} \times \abs{\calS'}$ matrix defined by
\begin{equation*}
   \textstyle{\boxplus_{\calS \times \calS'}} = \begin{pmatrix}
        \plus & \plus & \hdots & \plus \\
        \plus & \plus & \hdots & \plus \\
        \vdots & \vdots & \vdots & \vdots \\
        \plus & \plus & \hdots & \plus \\
    \end{pmatrix}.
\end{equation*}
We let $P_0, P_1 \in \calW_{\kappa}(\calY, \calE)$, and for $t\in \bbR$, we denote $\widetilde{P}_t = P_0^{\hadamard (1- t)} \hadamard P_1^{\hadamard t} \in \calF^{+}(\calY, \calE)$ their log-affine combination. We define $(\rho_t, v_t)$ the right PF pair of $\widetilde{P}_t$, whose existence follows by strong connectivity of $(\calY, \calE)$. By $\stoch$-normalization $P_t = \frac{1}{\rho_t}\diag(v)^{-1} \widetilde{P}_t \diag(v)$ is row-stochastic, 
and as a result, for any $(x,x') \in \calD$, it holds that for any $y \in \calS_{x}$,
\begin{equation*}
    \sum_{y' \in \calS_{x'}} P_t(y,y') = \sum_{y' \in \calY} P_t(y,y') = 1,
\end{equation*}
thus $P_t$ is $\kappa$-lumpable and the claim holds.
\end{proof}

\begin{example}[Lumpable e-family with two multi-row merging blocks]
\label{example:e-family-two-merging-block}

Let $\calY = \set{0,1,2,3}, \calX= \set{a,b}$, the lumping map $\kappa$ defined by the partition $\calS_{a} = \set{0,1}, \calS_{b} = \set{2,3}$, and 
consider the lumpable family
\begin{equation*}
\calW_\kappa(\calY,\calE) \sim 
    \left(\begin{tblr}{cc|[dashed]cc}
  \zero & \zero &  \zero & \plus  \\
  \zero & \zero & \plus & \plus\\\hline[dashed]
  \zero & \plus & \plus & \zero \\
  \plus & \plus & \zero & \plus \\
\end{tblr}\right).
\end{equation*}
Observe that the blocks $(a,b)$ and $(b,a)$ are multi-row merging, thus we cannot apply Corollary~\ref{corollary:no-multi-row-merging-block-is-sufficient}. However, the lumped family---ignoring the diagonal---forms a cycle. As a result,  Proposition~\ref{proposition:lazy-cycle-criterion} is applicable, and 
$\calW_\kappa(\calY,\calE)$ forms in fact an e-family.
\end{example}

\begin{corollary}
    There exist e-families of lumpable stochastic matrices with an arbitrary number of multi-row merging blocks.
\end{corollary}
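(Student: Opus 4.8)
The plan is to extend the two-block construction of Example~\ref{example:e-family-two-merging-block} to an arbitrarily long cycle. First I would fix an integer $n \geq 2$, set $\calX = \set{1, \dots, n}$ with lumped edge set $\calD = \set{(1,2), (2,3), \dots, (n-1,n), (n,1)}$ the directed $n$-cycle, which is strongly connected and contains no self-loop. Then I would take each fiber $\calS_i$ to have cardinality $2$, let $\calY = \biguplus_{i \in \calX} \calS_i$ (so that $\abs{\calY} = 2n > n = \abs{\calX} > 1$), and declare every off-diagonal block to be full, i.e. $\calE = \biguplus_{i \in \calX} \calS_i \times \calS_{i \bmod n + 1}$, with $\kappa$ the associated lumping map. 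It is then routine to check that $(\calY, \calE)$ is strongly connected and that $\calW_\kappa(\calY, \calE) \neq \emptyset$ (for instance, the matrix assigning $1/2$ to each edge is $\kappa$-lumpable).

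Next I would count the multi-row merging blocks. For each edge $(i, i') \in \calD$ of the $n$-cycle and any $y \in \calS_i$ we have $\abs{\set{(y,y') \in \calE \colon y' \in \calS_{i'}}} = \abs{\calS_{i'}} = 2 > 1$, so condition \eqref{eq:merging-block-condition} is met, and since $\abs{\calS_i} = 2 \geq 2$ the block $(i,i')$ is multi-row in the sense of Definition~\ref{definition:merging-block}. As $\calD$ is precisely the $n$-cycle, these account for exactly $n$ multi-row merging blocks (in particular Corollary~\ref{corollary:no-multi-row-merging-block-is-sufficient} is inapplicable once $n \geq 2$).

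Finally I would invoke the lazy-cycle criterion through its combinatorial form, Proposition~\ref{proposition:lazy-cycle-criterion}--\enumii. Since $\calD$ has no self-loop, no edge $(y,y') \in \calE$ satisfies $\kappa(y) = \kappa(y')$, so the implication ``$\kappa(y) = \kappa(y') \Rightarrow y = y'$'' holds vacuously; and the graph $\left(\calX, \set{(\kappa(y), \kappa(y')) \colon (y,y') \in \calE,\ \kappa(y) \neq \kappa(y')}\right)$ coincides with $(\calX, \calD)$, which is a cycle by construction. Hence $\calW_\kappa(\calY, \calE)$ forms an e-family while carrying $n$ multi-row merging blocks, and since $n \geq 2$ was arbitrary the corollary follows. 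I do not anticipate a genuine obstacle here: the statement is an immediate packaging of Proposition~\ref{proposition:lazy-cycle-criterion}, and the only care required is the bookkeeping of strong connectivity, of $\calW_\kappa(\calY, \calE) \neq \emptyset$, and of noting that the ``ignore the diagonal'' reduction in the lazy-cycle criterion is not needed here because $\calD$ was chosen diagonal-free from the outset.
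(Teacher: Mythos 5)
Your proposal is correct and matches the paper's intent: the corollary is stated without proof as an immediate consequence of the lazy-cycle criterion (Proposition~\ref{proposition:lazy-cycle-criterion}), and your $n$-cycle construction with two-element fibers and full off-diagonal blocks is exactly the natural generalization of Example~\ref{example:e-family-two-merging-block} that the paper has in mind. The bookkeeping of strong connectivity, non-vacuousness, and the count of $n$ multi-row merging blocks is all sound.
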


It is instructive to observe that neither criterion---Corollary~\ref{corollary:no-multi-row-merging-block-is-sufficient} nor Proposition~\ref{proposition:lazy-cycle-criterion}---implies the other.
What is more, there exist e-families that remain unexplained by the two above-mentioned criteria.

Deriving a set of necessary conditions for $\calW_{\kappa}(\calY, \calE)$ to form an e-family is the topic of our next section.

\subsection{Necessary combinatorial conditions}
\label{section:necessary-conditions}

As we have seen in the previous section, absence of log-affinity does not always preclude the lumpable family from being exponential. However, the former will be a critical ingredient to show that at least some families of matrices with so called ``redundant blocks'' cannot form e-families.

\subsubsection{Redundant merging block criterion}
\label{section:redundant-merging-block}

First, it will be convenient to consider the following operations on matrices.

\begin{definition}[Sub-matrix and block removal]
\label{definition:sub-matrix-block-removal}
Let $F \in \calF_\kappa^{+}(\calY, \calE)$ and a non-degenerate surjective lumping function $\kappa \colon \calY \to \calX$.
\begin{description}
    \item[Sub-matrix.] 
    For $\calT \subset \calX$, we write
\begin{equation*}
\begin{split}
    \calY|_{\calT} \eqdef \bigcup_{x \in \calT} \calS_{x} , \qquad 
    \calE|_{\calT} \eqdef \calE \cap \left( \bigcup_{x,x' \in \calT} \calS_{x} \times \calS_{x'}\right),
\end{split}
\end{equation*}
the resulting sub-graph is  ($\calY|_{\calT}, \calE|_{\calT}$) and the sub-matrix $F|_{\calT} \in \calF_\kappa^{+}(\calY|_{\calT}, \calE|_{\calT})$ is defined such that for any $(y,y') \in \calE|_{\calT}$, \begin{equation*}
    F|_{\calT}(y,y') = F(y,y').
\end{equation*}
    \item[Block removal.]
    The matrix $F$ from which the block $(x_0, x_0') \in \kappa_2(\calE)$ has been removed is defined as 
$F^{\setminus (x_0, x_0')}\in \calF_\kappa^{+}(\calY, \calE \setminus (\calS_{x_0} \times \calS_{x'_0}))$ where for any $(y,y') \in \calE \setminus (\calS_{x_0} \times \calS_{x'_0})$,
\begin{equation*}
        F^{\setminus (x_0, x_0')}(y,y') = F(y,y').
    \end{equation*}
\end{description}
\end{definition}

Critically, while both operations in Definition~\ref{definition:sub-matrix-block-removal} preserve lumpability, any of the two can disrupt strong connectivity. When a block can be removed while in some sense preserving irreducibility, we say that it is redundant.

\begin{definition}[Redundant block]
\label{definition:redundant-block}
Let $(x_0, x_0') \in \kappa_2(\calE)$. If there exists $\calT \subset \calX$ such that the following condition holds:
    \begin{enumerate}[$(i)$]
        \item[\enumi] The block $(x_0, x_0')$ is included in $ \calT^2$.
        \item[\enumii] The sub-graph
$(\calY|_{\calT}, \calE|_{\calT} \setminus (\calS_{x_0} \times \calS_{x_0'}))$ 
      is strongly connected.
\end{enumerate}
Then we say that the block $(x_0, x_0')$ is redundant.
\end{definition}

\begin{theorem}[Redundant merging block criterion]
\label{theorem:redundant-merging-block-criterion}
    If $(\calY, \calE)$ has a multi-row merging block with respect to $\kappa$ (Definition~\ref{definition:merging-block}) which is redundant (Definition~\ref{definition:redundant-block}),
    then $\calW_{\kappa}(\calY, \calE)$ does not form an e-family.
\end{theorem}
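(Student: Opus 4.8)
The plan is to argue by contradiction using the geodesic method alluded to in the introduction. Suppose $(x_0, x_0')$ is a multi-row merging block which is redundant, witnessed by a set $\calT \subset \calX$, and suppose for contradiction that $\calW_\kappa(\calY, \calE)$ forms an e-family. The first step is a reduction to the sub-family: I would like to restrict attention to the strongly connected sub-graph $(\calY|_\calT, \calE|_\calT)$ and its lumping. Being an e-family should be inherited by this sub-structure — the $\stoch \circ \exp$ parametrization of Definition~\ref{definition:e-family} restricts to sub-matrices in a natural way, since $\stoch$-normalization on a sub-matrix only involves the corresponding sub-block of any tilting function, and $\calN$ restricts to $\calN$ on the sub-graph. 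So without loss of generality I may assume $\calT = \calX$, i.e. the whole graph $(\calY, \calE)$ is such that removing the block $(x_0,x_0')$ keeps it strongly connected.

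Next I would build the explicit e-geodesic witness and track where it leaves the lumpable set. The key idea: because $(x_0, x_0')$ is redundant, the matrix obtained by deleting that block is still irreducible, so one can perturb the entries of the block $(x_0,x_0')$ essentially freely while preserving irreducibility. Concretely, pick $P_0, P_1 \in \calW_\kappa(\calY, \calE)$ that agree on every edge outside $\calS_{x_0} \times \calS_{x_0'}$ but differ inside it, chosen exactly as in the second half of the proof of Theorem~\ref{theorem:characterization-log-linearity}: on a multi-row block there are two rows $y_0, y_\star \in \calS_{x_0}$, with $y_\star$ connecting to two distinct targets $y'_a, y'_b \in \calS_{x_0'}$, and I swap the values $(\eta_a, \eta_b)$ between $P_0$ and $P_1$. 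Then form $\widetilde P_{1/2} = P_0^{\hadamard 1/2} \hadamard P_1^{\hadamard 1/2}$; by AM--GM the block row-sums out of $y_\star$ strictly decrease relative to those out of $y_0$, so $\widetilde P_{1/2} \notin \calF_\kappa^+(\calY, \calE)$, i.e. it fails the Kemeny--Snell condition \eqref{eq:kemeny-snell-condition}. If $\calW_\kappa(\calY,\calE)$ were an e-family, then $P_{1/2} = \stoch(\widetilde P_{1/2})$ would have to lie in $\calW_\kappa(\calY, \calE) \subset \calF_\kappa^+(\calY, \calE)$, and by Corollary~\ref{corollary:geometric-intersection-interpretation} this forces $[\widetilde P_{1/2}] \cap \calF_\kappa^+(\calY,\calE) \neq \emptyset$; the task is to show the particular construction can be arranged so that this intersection is empty, contradicting the e-family assumption.

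The main obstacle — and where the redundancy hypothesis does real work — is exactly this last point: showing $[\widetilde P_{1/2}]$ avoids $\calF_\kappa^+(\calY,\calE)$. By Proposition~\ref{proposition:eigenvector-rescaling-constant-on-each-lumped-point}, conjugating $\widetilde P_{1/2}$ by $\diag(v)$ lands back in $\calF_\kappa^+$ only when $v$ is constant on each $\calS_x$; and by Proposition~\ref{proposition:stochastic-rescaling-and-lumping-commute-and-partition-constant-eigenvector}--\enumii, the PF eigenvector route is exactly the constant-on-blocks one. So I must show that $\widetilde P_{1/2}$ cannot be made lumpable by any conjugation that is constant on blocks. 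Conjugation by a block-constant vector multiplies the whole block $(x_0, x_0')$ by a single scalar $v_{x_0}/v_{x_0'}$, which cannot repair the violated AM--GM inequality inside that block — the failure of $\widetilde P_{1/2}(y_\star, \calS_{x_0'}) = \widetilde P_{1/2}(y_0, \calS_{x_0'})$ is scale-invariant within the block. Here is where redundancy is essential: it guarantees the rest of the graph stays strongly connected with the block deleted, so the degrees of freedom one would otherwise hope to use to "fix" lumpability by adjusting other blocks are genuinely decoupled — there is no indirect compensation, because the only constraint tying $v_{x_0}$ to $v_{x_0'}$ through the block $(x_0,x_0')$ is the very one we have broken, and strong connectivity of the complement shows $\widetilde P_{1/2}$ restricted elsewhere is a legitimate irreducible lumpable matrix whose normalization is unaffected. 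Thus no block-constant $v$ yields a lumpable representative, $[\widetilde P_{1/2}] \cap \calF_\kappa^+(\calY,\calE) = \emptyset$, so $\stoch(\widetilde P_{1/2}) \notin \calW_\kappa(\calY,\calE)$, contradicting the e-family property via \citet[Corollary~3]{nagaoka2005exponential}. I would present the reduction to $\calT$ and the decoupling argument carefully, as these are the substantive points; the AM--GM computation itself is imported essentially verbatim from Theorem~\ref{theorem:characterization-log-linearity}.
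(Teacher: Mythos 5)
Your overall strategy matches the paper's: build the AM--GM witness $\widetilde P_{1/2}$ from the proof of Theorem~\ref{theorem:characterization-log-linearity}, reduce via Corollary~\ref{corollary:geometric-intersection-interpretation} to showing $[\widetilde P_{1/2}] \cap \calF_\kappa^+(\calY,\calE) = \emptyset$, and use redundancy to force any conjugating vector $v$ to be constant on the blocks $\calS_{x_0}, \calS_{x_0'}$, after which the scale-invariant AM--GM violation gives the contradiction. However, two steps do not hold as written. The opening reduction to $\calT = \calX$ is unjustified: the claim that the e-family property passes to the sub-family on $(\calY|_\calT,\calE|_\calT)$ because ``$\stoch$-normalization on a sub-matrix only involves the corresponding sub-block'' is false, since $\stoch$ is built from the Perron--Frobenius eigenpair of the \emph{whole} matrix and the PF eigenvector of a sub-matrix is not the restriction of that of the full matrix. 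The paper never makes this reduction; it keeps the full matrices throughout and only invokes the sub-graph at the cone level, where restriction genuinely preserves lumpability because the Kemeny--Snell condition is row-local (Definition~\ref{definition:sub-matrix-block-removal}). Fortunately the reduction is also unnecessary.

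The more serious gap is the pivotal sentence ``By Proposition~\ref{proposition:eigenvector-rescaling-constant-on-each-lumped-point}, conjugating $\widetilde P_{1/2}$ by $\diag(v)$ lands back in $\calF_\kappa^+$ only when $v$ is constant on each $\calS_x$.'' That proposition concerns matrices that are already in $\calF_\kappa^+(\calY,\calE)$, whereas $\widetilde P_{1/2}$ is by design not lumpable; and if the claim were true for arbitrary non-lumpable matrices, redundancy would play no role and every multi-row merging block would destroy the e-family property, contradicting Proposition~\ref{proposition:lazy-cycle-criterion} and Example~\ref{example:e-family-two-merging-block}, where a non-block-constant similarity transform does repair lumpability. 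You correctly sense that redundancy is what must force $v$ to be block-constant, but the ``decoupling'' paragraph gestures at this without supplying the argument. The missing chain, which is the substance of the paper's proof, is: if $\diag(v)\widetilde P_{1/2}\diag(v)^{-1}$ were lumpable, so would be the conjugate of the block-removed restriction $\widetilde P_{1/2}|_\calT^{\setminus(x_0,x_0')}$; that restriction is itself lumpable (since $P_0$ and $P_1$ agree off the block $(x_0,x_0')$) and irreducible (exactly what redundancy buys); applying the one-matrix direction of Proposition~\ref{proposition:eigenvector-rescaling-constant-on-each-lumped-point} (which is what its proof of (i)$\Rightarrow$(ii) actually establishes) to that irreducible lumpable sub-matrix forces $v$ to be constant on each $\calS_x$ for $x\in\calT$, in particular on $\calS_{x_0}$ and $\calS_{x_0'}$, and only then does your AM--GM computation close the contradiction. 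With that chain made explicit, your proof coincides with the paper's.
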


\begin{proof}
    We suppose that $(\calY, \calE)$ has a multi-row merging block $(x_0, x_0') \in \kappa_2(\calE)$, which is redundant.
    Recall from the proof Theorem~\ref{theorem:characterization-log-linearity} that there exist stochastic matrices $P_0, P_1 \in \calW_{\kappa}(\calY, \calE)$ such that $\widetilde{P}_{1/2} = P_0^{\hadamard 1/2} \hadamard P_1^{\hadamard 1/2} \not \in \calF_{\kappa}^{+}(\calY, \calE)$.
    From Corollary~\ref{corollary:geometric-intersection-interpretation}, it is enough to show that   $[\widetilde{P}_{1/2}] \cap \calF_\kappa^{+}(\calY, \calE) = \emptyset$ in order to prove that $\stoch(\widetilde{P}_{1/2}) \not \in \calW_\kappa(\calE, \calY)$. In other words, it is sufficient to show that for any $v \in \bbR_+^{\calY}$, the rescaled matrix $\diag(v) \widetilde{P}_{1/2}\diag(v)^{-1}$ is not $\kappa$-lumpable.
    Let us suppose for contradiction that there exists $v \in \bbR_+^{\calY}$, such that $\diag(v) \widetilde{P}_{1/2}\diag(v)^{-1} \in \calF_{\kappa}^{+}(\calY, \calE)$.
    Since $(x_0,x_0')$ is redundant, there exists $\calT \subset \calX$ such that
    $(x_0, x_0') \in \calT^2$ and 
$(\calY|_{\calT}, \calE|_{\calT} \setminus (\calS_{x_0} \times \calS_{x_0'}))$ 
      is strongly connected.
    On one hand, by our assumption, it must hold that 
    \begin{equation}
    \label{eq:from-assumption}
        \diag(v) \left(\widetilde{P}_{1/2}|_{\calT}^{\setminus (x_0, x_0')}\right)\diag(v)^{-1} \in \calF_{\kappa}^{+}(\calY|_{\calT}, \calE|_{\calT} \setminus (\calS_{x_0} \times \calS_{x'_0})).
    \end{equation}
    However, observe that by our construction in the proof of Theorem~\ref{theorem:characterization-log-linearity}, 
    we obtain a matrix such that when lumped, it will be the uniform distribution on blocks other than $(x_0,x_0')$,
    and we also have
    \begin{equation}
    \label{eq:from-construction}
\widetilde{P}_{1/2}|_{\calT}^{\setminus (x_0, x_0')} \in \calF_{\kappa}^{+}(\calY|_{\calT}, \calE|_{\calT} \setminus (\calS_{x_0} \times \calS_{x'_0})).
    \end{equation} 
    It follows from irreducibility of  $(\calY|_{\calT}, \calE|_{\calT} \setminus (\calS_{x_0} \times \calS_{x'_0}))$ and an application of Proposition~\ref{proposition:eigenvector-rescaling-constant-on-each-lumped-point} that
    $v$ must be constant over each $\calS_{x}$ for $x \in \calT$ in order for both \eqref{eq:from-assumption} and \eqref{eq:from-construction} to hold, and in particular for $x \in \set{ x_0, x_0'}$.
    However, in this case, inspecting the multi-row merging block $(x_0, x_0')$,
    \begin{equation*}
        \begin{split}
\sum_{\overline{y} \in \calS_{x_0'}} (\diag(v) \widetilde{P}_{1/2} \diag(v)^{-1})(y_0, \overline{y}) &= \frac{v_{x_0}}{v_{x_0'}} \frac{1}{\abs{\overline{x} \in \calX \colon (x_0, \overline{x}) \in \calD}},
        \end{split}
    \end{equation*}
    while
    \begin{equation*}
    \begin{split}
        \sum_{\overline{y} \in \calS_{x_0'}} (\diag(v) \widetilde{P}_{1/2} \diag(v)^{-1})(y_\star, \overline{y}) &= \frac{v_{x_0}}{v_{x_0'}} \left( \frac{1}{\abs{\overline{x} \in \calX \colon (x_0, \overline{x}) \in \calD}} + 2 \sqrt{\eta_a \eta_b} - (\eta_a + \eta_b) \right),
    \end{split}
    \end{equation*}
which cannot be equal from the AM-GM inequality and the assumption that $\eta_a \neq \eta_b$.
\end{proof}

\begin{example}
Let $\calY = \set{0,1,2,3,4,5}, \calX= \set{a,b,c}$, the lumping map $\kappa$ defined by the partition $\calS_{a} = \set{0,1}, \calS_{b} = \set{2,3}, \calS_{c} = \set{4,5}$, and 
consider the lumpable family
     \begin{equation*}
     \calW_{\kappa}(\calY, \calE) \sim
\left(\begin{tblr}{cc|[dashed]cc|[dashed]cc}
  \plus & \zero & \plus & \plus & \plus & \zero \\
  \zero & \plus & \plus & \plus & \zero & \plus \\\hline[dashed]
  \plus & \zero & \plus & \zero & \plus & \plus \\
  \zero & \plus & \zero & \plus & \plus & \plus \\\hline[dashed]
  \plus & \plus & \plus & \zero & \plus & \zero \\
  \plus & \plus & \zero & \plus & \zero & \plus \\
\end{tblr}\right).
\end{equation*}
Here, $(b,c) \in \calD$ is a merging block. When we remove edges pertaining to the block $(b,c)$---that is we remove $(2, 4), (2, 5), (3, 4), (3, 5)$---there exists a closed path 
$$1 \rightarrow 2 \rightarrow 0 \rightarrow 4 \rightarrow 2 \rightarrow 0 \rightarrow 3 \rightarrow 1 \rightarrow 5 \rightarrow 3 \rightarrow 1,$$ 
going through all the states in $\calY$. As a result, the block $(b, c)$ is redundant with $\calT = \calY$ and from Theorem~\ref{theorem:redundant-merging-block-criterion},  $\calW_{\kappa}(\calY, \calE)$ does not form an e-family.
\end{example}

\begin{corollary}[Complete graph]
\label{corollary:complete-graph}
Unless $\kappa$ is degenerate, $\calW_{\kappa}(\calY, \calY^2)$ does not form an e-family.
\end{corollary}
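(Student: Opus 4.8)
The plan is to derive Corollary~\ref{corollary:complete-graph} as an immediate application of Theorem~\ref{theorem:redundant-merging-block-criterion}. It suffices to verify two things about the complete-graph connection set $\calE = \calY^2$ when $\kappa$ is non-degenerate (meaning $1 < |\calX| < |\calY|$): first, that $(\calY, \calY^2)$ possesses a multi-row merging block with respect to $\kappa$, and second, that this block is redundant in the sense of Definition~\ref{definition:redundant-block}.

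For the first point, since $\kappa$ is non-degenerate there is some $x \in \calX$ with $|\calS_x| \geq 2$, and since $|\calX| \geq 2$ there is some $x' \neq x$ in $\calX$ (one could also take $x' = x$ if $|\calS_x| \geq 2$); either way, $|\calS_{x'}| \geq 1$. Because every edge is present, for any $y \in \calS_x$ we have $\{(y,y') \in \calE : y' \in \calS_{x'}\} = \{y\} \times \calS_{x'}$. To make this a merging block we need $|\calS_{x'}| > 1$ for some choice, so more carefully: pick $x'$ with $|\calS_{x'}| \geq 2$ (possible since $|\calY| > |\calX|$ forces at least one preimage to have size $\geq 2$) and pick $x$ with $|\calS_x| \geq 2$ as well --- if the same block $x = x'$ already has both, use $(x,x)$; otherwise use any $(x, x')$ with $x \neq x'$, both of size $\geq 2$, or note that in the worst case exactly one block has size $\geq 2$, say $\calS_{x_0}$, and then $(x_0, x_0) \in \calD$ is itself a multi-row merging block since $|\calS_{x_0}| \geq 2$ and $|\{(y, y') \in \calE : y' \in \calS_{x_0}\}| = |\calS_{x_0}| > 1$. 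In all cases a multi-row merging block $(x_0, x_0')$ exists. For the second point, take $\calT = \calX$. Then condition \enumi of Definition~\ref{definition:redundant-block} is automatic since $(x_0,x_0') \in \calX^2$, and for \enumii we must check that $(\calY, \calY^2 \setminus (\calS_{x_0} \times \calS_{x_0'}))$ is strongly connected. This holds because deleting a single "block" of edges from the complete digraph on $|\calY| \geq 3$ vertices still leaves a digraph in which every ordered pair $(y, y')$ is connected by a path of length at most two: if the direct edge $(y,y')$ survives we are done, and if $(y,y') \in \calS_{x_0} \times \calS_{x_0'}$ was removed, pick any intermediate vertex $z \notin \calS_{x_0'}$ (which exists since $\calS_{x_0'} \neq \calY$ as $|\calX| \geq 2$) --- then $(y, z) \notin \calS_{x_0} \times \calS_{x_0'}$ and $(z, y') \notin \calS_{x_0} \times \calS_{x_0'}$, so both edges survive and $y \to z \to y'$ is a valid path. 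Hence the block is redundant.

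With both facts established, Theorem~\ref{theorem:redundant-merging-block-criterion} applies directly and yields that $\calW_\kappa(\calY, \calY^2)$ does not form an e-family, completing the proof.

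I do not anticipate a serious obstacle here; the only mild subtlety is bookkeeping in the case analysis for locating the multi-row merging block (ensuring one always exists in the edge cases where only one preimage class is non-singleton, or where $|\calS_{x_0'}| = 1$ forces us to use a self-block), and making sure the intermediate-vertex argument for strong connectivity genuinely has a vertex outside $\calS_{x_0'}$ to route through, which is guaranteed precisely by $|\calX| \geq 2$. Both are routine. One could alternatively phrase the strong-connectivity check by invoking that the complement of a complete bipartite-type edge set in a complete digraph on $\geq 3$ vertices remains strongly connected, but the explicit length-two path argument is cleaner and self-contained.
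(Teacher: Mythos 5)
Your overall strategy is exactly the paper's: exhibit a multi-row merging block of $(\calY,\calY^2)$, observe that it is redundant, and invoke Theorem~\ref{theorem:redundant-merging-block-criterion}. The paper's proof is a two-line sketch of precisely this, so the approaches coincide; the issue is with one branch of your detailed verification.

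The flaw is in the strong-connectivity check for an \emph{off-diagonal} block. You route $y \to z \to y'$ through a vertex $z \notin \calS_{x_0'}$ and claim $(z,y') \notin \calS_{x_0}\times\calS_{x_0'}$; but since $y' \in \calS_{x_0'}$, that membership fails only if $z \notin \calS_{x_0}$, which your choice of $z$ does not guarantee when $x_0 \neq x_0'$. You actually need $z \notin \calS_{x_0}\cup\calS_{x_0'}$, which requires $\abs{\calX}\geq 3$. Worse, when $\abs{\calX}=2$ the off-diagonal block $(x_0,x_0')$ with $x_0\neq x_0'$ is genuinely \emph{not} redundant: deleting all of $\calS_{x_0}\times\calS_{x_0'}$ from $\calY^2$ leaves $\calS_{x_0'}$ unreachable from $\calS_{x_0}$, and condition \enumi{} of Definition~\ref{definition:redundant-block} forces $\calT=\calX$, so no choice of $\calT$ rescues it. Hence the branch of your case analysis that selects $(x,x')$ with $x\neq x'$ can fail. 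The repair is already present in your own ``worst case'' branch: since $\abs{\calX}<\abs{\calY}$ there is some $x_0$ with $\abs{\calS_{x_0}}\geq 2$, and the diagonal block $(x_0,x_0)$ is always a multi-row merging block; for it, $x_0=x_0'$ makes your length-two path argument sound ($z\notin\calS_{x_0}$ exists because $\abs{\calX}\geq 2$, and then both $(y,z)$ and $(z,y')$ survive the deletion). Using the diagonal block uniformly eliminates the case analysis and closes the gap.
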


\begin{proof}
    When $\kappa$ is non-degenerate, observe that
     $(\calY, \calY^2)$ must have a multi-row merging block. Additionally, removing this block still yields a strongly connected graph. 
    From a direct application of  Theorem~\ref{theorem:redundant-merging-block-criterion}, it follows that $\calW_{\kappa}(\calY, \calY^2)$ does not form an e-family.
\end{proof}

The most intriguing scenarios occur in the intermediate range between having no multi-row merging blocks and all blocks being multi-row merging.
Indeed, as the number of edges in the connection graph grows, it becomes increasingly difficult for the lumpable family to form an e-family. This intuition will be rigorously formalized in Section~\ref{section:monotonicity}.

\subsection{Characterization based on a dimensional criterion}
\label{section:dimensional-criterion}

Sections~\ref{section:sufficient-condition}~and~\ref{section:redundant-merging-block} focused on analyzing combinatorial properties of the connection graph. In this section we develop a more geometric argument to determine whether $\calW_\kappa(\calY, \calE)$ forms an e-family. We first construct the e-hull of $\calW_\kappa(\calY, \calE)$, that is, the smallest e-family containing $\calW_\kappa(\calY, \calE)$---this notion is akin to taking an affine hull in the e-coordinates and a formal definition will be given in Definition~\ref{definition:e-hull}. 
Intuitively but crucially, $\calW_\kappa(\calY, \calE)$ coincides with its e-hull if and only if it forms an e-family.
Our approach consists of comparing 
the known manifold dimension of $\calW_\kappa(\calY, \calE)$ and that of its e-hull. A dimension mismatch---that is the e-hull forms a higher dimensional manifold---is evidence that $\calW_\kappa(\calY, \calE)$ is curved, while matching dimensions together with additional properties of the lumpable family lead to the conclusion that $\calW_\kappa(\calY, \calE)$ forms an e-family. However, computing the dimension of the e-hull is nontrivial; nonetheless, we provide a basis for it and propose a polynomial-time algorithm to compute its dimension. 
We begin by recalling the definition of the positive lumpable cone
\begin{equation*}
    \calG_\kappa(\calY, \calE) \eqdef \set{ \log F \colon F \in \calF^+_\kappa(\calY, \calE) },
\end{equation*}
and its affine hull in $\calF(\calY, \calE)$,
\begin{equation*} \aff(\calG_\kappa(\calY, \calE)) \eqdef \set{ \sum_{i = 1}^{k} \alpha_i G_i \colon k \in \bbN, \alpha \in \bbR^k, \sum_{i=1}^{k} \alpha_i = 1, G_1, \dots, G_k \in \calG_\kappa(\calY, \calE) }.
\end{equation*}
From Theorem~\ref{theorem:characterization-log-linearity}, we know that $\aff(\calG_\kappa(\calY, \calE)) = \calG_\kappa(\calY, \calE)$---that is $\calG_\kappa(\calY, \calE)$ is an affine space---if and only if $(\calY, \calE)$ has no multi-row merging block with respect to $\kappa$.
For convenience, we define
\begin{equation}
\label{equation:block-type-nomenclature}
\begin{split}
    \calM_{x,x'} &\eqdef \set{ y \in \calS_{x} \colon \exists y_1', y_2' \in \calS_{x'}, y_1' \neq y_2',  (y,y_1'), (y,y_2') \in \calE} \qquad \text{for any $(x,x') \in \calD$},
    \\ \calM &\eqdef \set{ (x,x') \in \calD \colon \calM_{x,x'} \neq \emptyset }, \\
    \calU &\eqdef  \set{(x,x') \in \calD \colon \calM_{x,x'} \neq \calS_{x}}, \\
    \calR &\eqdef \bigcup_{(x_0,x_0') \in \calM } \set{ (y_0, y_0') \in \calM_{x_0, x_0'} \times \calS_{x_0'} \colon (y_0, y_0') \in \calE}.
\end{split}
\end{equation}
In words, $\calM$ is the collection of merging blocks---not necessarily multi-row---with respect to $(\calY, \calE)$ and $\kappa$, for any block $(x,x') \in \calD$, $\calM_{x,x'} \subset \calS_{x}$ is its set of merging rows, $\calU$ is the collection of all blocks containing at least one non-merging row, and $\calR$ is the set of all edges which appear in some merging row. In particular, when $(x,x') \in \calD \setminus \calM$ it holds that $\calM_{x,x'} = \emptyset$.
For each $(x,x') \in \calD$, we fix a set of $\abs{\calS_x}$ anchor edges (see Example~\ref{example:basis-affine-hull-log-lumpable-cone} later in this section),
\begin{equation}
\label{eq:anchor-points}
    E_{x,x'} \eqdef \set{ \left(\bar{y}_1, \bar{y}_1'\right), \dots, \left(\bar{y}_{\abs{\calS_x}}, \bar{y}'_{\abs{\calS_x}}\right) \in \calE \cap \left(\calS_{x} \times \calS_{x'}\right), \text{with } \bar{y}_1, \dots, \bar{y}_{\abs{\calS_x}} \text{ distinct }}.
\end{equation}

\begin{lemma}
\label{lemma:affine-span-log-lumpable-cone-is-linear}
    $\aff(\calG_{\kappa}(\calY, \calE))$ is a linear space.\footnote{In particular, it contains the null vector.}
\end{lemma}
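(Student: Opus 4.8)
The plan is to prove the logically equivalent statement that $\aff(\calG_\kappa(\calY,\calE))$ contains the origin; since an affine hull is always an affine flat, this immediately upgrades it to a linear subspace. I would argue by contradiction: if $0 \notin \aff(\calG_\kappa(\calY,\calE))$, then this flat misses the origin, hence lies in an affine hyperplane not through $0$, so there is a vector $a \in \calF(\calY,\calE) = \bbR^{\calE}$ with $\langle a, \log F \rangle = 1$ for every $F \in \calF^+_\kappa(\calY,\calE)$. The whole argument consists in squeezing a contradiction such as $0 = 1$ out of this identity, and the extraction proceeds in two stages.

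First I would show that $a$ must vanish on every edge issuing from a merging row, i.e. $a|_{\calR} = 0$ with $\calR$ as in \eqref{equation:block-type-nomenclature}. Fix $F_0 \in \calF^+_\kappa(\calY,\calE)$, a merging block $(x,x') \in \calM$, a merging row $y \in \calM_{x,x'}$, and two distinct $y_1', y_2' \in \calS_{x'}$ with $(y,y_1'),(y,y_2') \in \calE$. For $s$ in a small neighbourhood of $0$, the function $F_s$ obtained from $F_0$ by adding $s$ to the entry $(y,y_1')$ and subtracting $s$ from the entry $(y,y_2')$ still lies in $\calF^+_\kappa(\calY,\calE)$: positivity and the support $\calE$ survive for $|s|$ small, and since only the weights of two edges from $y$ into $\calS_{x'}$ are moved, \emph{every} block row-sum $F(\cdot,\calS_{x''})$ is unchanged, so the Kemeny--Snell condition \eqref{eq:kemeny-snell-condition} persists. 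Hence $s \mapsto \langle a, \log F_s \rangle - 1 = a(y,y_1')\log(1 + s/F_0(y,y_1')) + a(y,y_2')\log(1 - s/F_0(y,y_2'))$ vanishes identically on an interval; matching the coefficients of $s$ and $s^2$ in its Taylor expansion at $0$ — or simply noting that it blows up at $s = -F_0(y,y_1')$ unless its coefficient is zero — forces $a(y,y_1') = a(y,y_2') = 0$. Running this over all pairs of edges from a merging row and over all merging rows gives $a|_{\calR} = 0$.

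Second I would exploit the Kemeny--Snell identity on the remaining edges. If $(y,y') \in \calE \setminus \calR$ then $y$ is not a merging row for $(\kappa(y),\kappa(y'))$, so $(y,y')$ is the unique edge from $y$ into $\calS_{\kappa(y')}$ and therefore $F(y,y') = F(y,\calS_{\kappa(y')}) = \kappa_\star F(\kappa(y),\kappa(y'))$ for every $F \in \calF^+_\kappa(\calY,\calE)$. Combined with $a|_{\calR} = 0$, this collapses the identity to $1 = \langle a, \log F \rangle = \sum_{(x,x') \in \calD} A_{x,x'} \log(\kappa_\star F)(x,x')$ with $A_{x,x'} = \sum_{(y,y') \in (\calE\setminus\calR)\cap(\calS_x\times\calS_{x'})} a(y,y')$, i.e. a \emph{linear} functional of $\log(\kappa_\star F) \in \bbR^{\calD}$. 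Finally, $\kappa_\star$ maps $\calF^+_\kappa(\calY,\calE)$ onto $\calF^+(\calX,\calD)$: fixing any $P_0 \in \calW_\kappa(\calY,\calE)$, nonempty by the standing assumption of this section, the rescaling $F(y,y') = P_0(y,y')\,G(\kappa(y),\kappa(y'))/\kappa_\star P_0(\kappa(y),\kappa(y'))$ lies in $\calF^+_\kappa(\calY,\calE)$ with $\kappa_\star F = G$ for any prescribed $G \in \calF^+(\calX,\calD)$. Hence $\log(\kappa_\star F)$ ranges over all of $\bbR^{\calD}$, so the above linear functional equals the constant $1$ on all of $\bbR^{\calD}$; evaluating at $0 \in \bbR^{\calD}$ yields $0 = 1$, the desired contradiction, and the lemma follows.

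I expect the delicate point to be the first stage, $a|_{\calR} = 0$: one has to pick exactly the right one-parameter family of lumpable perturbations — supported on two edges inside a single merging block, so that no Kemeny--Snell constraint is disturbed — and then rule out nontrivial cancellation between two logarithmic terms with distinct singularities. This is the step where the multi-row merging structure genuinely enters. Once it is in hand, the rest is bookkeeping: off the merging rows every edge weight is pinned to the corresponding lumped weight, and the lumping map is surjective on the positive cones, which together make the contradiction immediate.
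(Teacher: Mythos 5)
Your proof is correct, but it takes a genuinely different route from the paper's. The paper argues \emph{primally}: it exhibits an explicit one-parameter affine combination $t\log F_\alpha+(1-t)\log F_\beta$ of two carefully chosen lumpable functions (uniform off a set of anchor edges, with weights $\eta$ and $2\eta$) whose norm tends to $0$ as $\eta\to 0^+$, and then concludes $0\in\aff(\calG_\kappa(\calY,\calE))$ because an affine subspace of $\bbR^{\calE}$ is closed and hence contains this accumulation point. You argue \emph{dually}: if $0$ were not in the affine hull there would be $a$ with $\langle a,\log F\rangle=1$ on all of $\calF^+_\kappa(\calY,\calE)$, and you kill $a$ in two stages --- first on $\calR$ via the two-edge perturbation $F_0\mapsto F_s$ inside a single merging block (which indeed preserves every block row-sum, hence lumpability, and the vanishing of the first two Taylor coefficients, or the logarithmic singularity at $s=-F_0(y,y_1')$, forces both coefficients of $a$ to vanish), and then on $\calE\setminus\calR$ by observing that there $F(y,y')$ equals the lumped weight, so the functional factors through $\log\kappa_\star F$, which ranges over all of $\bbR^{\calD}$ by your rescaling construction; evaluating at $0$ gives $1=0$. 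Both arguments are sound. The paper's is shorter and self-contained but hinges on a slightly delicate explicit family and a closedness-plus-limit step; yours avoids any limiting sequence of lumpable matrices (beyond local Taylor/singularity analysis of a single perturbation), and as a byproduct it essentially identifies the annihilator of $\spann(\calG_\kappa(\calY,\calE))$, which anticipates the basis $\calB_\kappa(\calY,\calE)$ and the dimension formula $\abs{\calU}+\abs{\calR}$ that the paper establishes in the proposition immediately following this lemma. One small presentational point: your perturbation identity actually holds on the whole interval $(-F_0(y,y_1'),\,F_0(y,y_2'))$, not just a small neighbourhood of $0$, which makes the singularity version of the argument cleanest.
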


\begin{proof}
By definition $\aff(\calG_{\kappa}(\calY, \calE))$ is an affine space,
    so it suffices to show that it contains the null vector.
It will be convenient to introduce $N \eqdef \max_{x \in \calX} \abs{\calS_{x}}$, and for $y \in \calY, x' \in \calX$,
\begin{equation}
\label{equation:definition-s}
    s_{y,x'} \eqdef \abs{ \left(\set{y} \times \calS_{x'}\right) \cap \calE }.
\end{equation}
For $\alpha \in (0, 1/N)$,
    we let $F_\alpha \in \calF(\calY, \calE)$ be such that for any $(y,y') \in \calE$,
    \begin{equation*}
        F_\alpha(y,y') = \begin{cases}
            \alpha &\text{ when } (y,y') \not \in E_{\kappa(y),\kappa(y')}, \\
            1 - \left( s_{y,\kappa(y')} - 1 \right)\alpha &\text{ otherwise}.
        \end{cases}
    \end{equation*} We similarly introduce $F_\beta$ for $\beta \in (0, 1/N)$ with $\beta \neq \alpha$. 
    By construction, $F_\alpha, F_\beta \in \calF_{\kappa}^{+}(\calY, \calE)$.
    For $t \in \bbR$, let us inspect $G_t = t \log F_\alpha + (1 - t) \log F_\beta$.
    Setting 
    \begin{equation*}
        t = \frac{\log \beta}{\log \left( \frac{\beta}{\alpha} \right)},
    \end{equation*}
we obtain
\begin{equation*}
    G_t(y,y') = \begin{cases}
       0 \qquad \qquad &\text{ when } (y,y') \not \in E_{\kappa(y),\kappa(y')}, \\
       \log \left( 1 - \left(s_{y,\kappa(y')} - 1\right) \beta \right) + \cfrac{\log \beta}{\log \left( \frac{\beta}{\alpha} \right)} \log \left( \cfrac{1 - \left(s_{y,\kappa(y')} - 1\right) \alpha}{1 - \left(s_{y,\kappa(y')} - 1\right) \beta} \right) &\text{ otherwise}.
    \end{cases}
\end{equation*}
Further setting $\alpha = \eta$ and $\beta = 2 \eta$ for $\eta \in (0, 1/(2N))$, we obtain
\begin{equation*}
    G_t(y,y') = \begin{cases}
       0 \qquad \qquad &\text{ when } (y,y') \not \in E_{\kappa(y),\kappa(y')}, \\
       \log \left( 1 - 2 \left(s_{y,\kappa(y')} - 1\right) \eta \right) + \cfrac{\log (2 \eta)}{\log \left( 2 \right)} \log \left( \cfrac{1 - \left(s_{y,\kappa(y')} - 1\right) \eta}{1 - 2 \left(s_{y,\kappa(y')} - 1\right) \eta} \right) &\text{ otherwise}.
    \end{cases}
\end{equation*}
For any fixed $s$, the function 
\begin{equation*}
    h \colon \eta \mapsto \log \left( 1 - 2 (s - 1) \eta \right) + \cfrac{\log (2 \eta)}{\log \left( 2 \right)} \log \left( \cfrac{1 - (s - 1) \eta}{1 - 2 (s - 1) \eta} \right)
\end{equation*}
is continuous, negative, decreasing, and satisfies $\lim_{\eta^+ \to 0} h(\eta) = 0$. So for any $\eps > 0$ there exists $\eta_s$ such that $h(\eta_s) < \eps$.
Finally, taking $s_\star = \min \set{s_{y, \kappa(y')} \colon (y,y') \in E_{x,x'}, (x,x') \in \calD}$, for $\eta < \eta_\star$, we have that
$\nrm{G_t} < \eps \sqrt{\abs{\calE}}$. In other words $0 \in \calF(\calY, \calE)$ can be characterized as an accumulation point of a sequence of elements of $\aff(\calG_\kappa(\calY, \calE))$.
Since $\aff(\calG_{\kappa}(\calY, \calE))$ is an affine subspace of $\calF(\calY, \calE) = \bbR^{\calE}$, it is closed---it contains all its accumulation points, in particular the null function.
\end{proof}

Since they coincide, we henceforth write $\spann(\calG_{\kappa}(\calY, \calE)) = \aff(\calG_{\kappa}(\calY, \calE))$.

\begin{proposition}
For $(x_0,x_0') \in \calU$, let
$G^{\uparrow}_{x_0,x_0'} \in \calF(\calY,\calE)$ be such that for any $(y,y') \in \calE$,
    \begin{equation*}
    G^{\uparrow}_{x_0,x_0'}(y,y') = \pred{ (y,y') \in E_{x_0,x_0'} },
\end{equation*}
where $E_{x_0,x_0'}$ is introduced in \eqref{eq:anchor-points} and the sets $\calU, \calR$ are introduced in \eqref{equation:block-type-nomenclature}.
For any $(y_0, y_0') \in \calR$, let $G^{\uparrow}_{y_0, y_0'} \in \calF(\calY, \calE)$ be such that for any $(y,y') \in \calE$,
\begin{equation*}
    G^{\uparrow}_{y_0, y_0'}(y,y') = \pred{(y,y') = (y_0, y_0')}.
\end{equation*}
The system of functions
\begin{equation*}
    \calB_{\kappa}(\calY, \calE) \eqdef \set{G^{\uparrow}_{x_0,x_0'} \colon (x_0,x_0') \in \calU} \cup \set{G^{\uparrow}_{y_0, y_0'} \colon (y_0,y_0') \in \calR},
\end{equation*}
forms a basis for $\spann(\calG_{\kappa}(\calY, \calE))$, and
\begin{equation*}
    \dim \spann(\calG_\kappa(\calY, \calE)) = 
    \abs{\calU} + \abs{\calR}.
\end{equation*}
\end{proposition}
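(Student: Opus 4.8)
The plan is to prove three things about $\calB_{\kappa}(\calY, \calE)$: that it is linearly independent, that it is contained in $\spann(\calG_{\kappa}(\calY, \calE))$, and that conversely $\spann(\calG_{\kappa}(\calY, \calE)) \subseteq \spann(\calB_{\kappa}(\calY, \calE))$. The last two give $\spann(\calB_{\kappa}(\calY, \calE)) = \spann(\calG_{\kappa}(\calY, \calE))$, so $\calB_{\kappa}(\calY, \calE)$ is a spanning set, and together with linear independence it is a basis; linear independence also forces the $\abs{\calU} + \abs{\calR}$ listed functions to be pairwise distinct, yielding the dimension formula.

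First I would record the combinatorial setup used throughout. Since $\calW_{\kappa}(\calY, \calE) \neq \emptyset$, for every block $(x,x') \in \calD$ each row $y \in \calS_{x}$ has at least one edge into $\calS_{x'}$ (otherwise the positive lumped value $\kappa_{\star}P(x,x')$ could not be attained by that row), hence a \emph{non-merging} row $y \in \calS_{x} \setminus \calM_{x,x'}$ has \emph{exactly} one such edge, which I write $(y, u_{x'}(y))$; this edge is then forced to be the anchor of row $y$ in any choice of $E_{x,x'}$ from \eqref{eq:anchor-points}. Consequently $\calE$ splits as the disjoint union of $\calR$ (the edges whose source is a merging row of its block) and the set of all non-merging-row anchor edges.

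For linear independence, the supports $E_{x_0,x_0'}$ with $(x_0,x_0') \in \calU$ lie in the pairwise disjoint rectangles $\calS_{x_0} \times \calS_{x_0'}$, and every $(x_0,x_0') \in \calU$ has a non-merging row, whose anchor edge lies in no $\calR$ and in no other $E_{x_1,x_1'}$; evaluating a vanishing linear combination at such edges kills all coefficients of the $G^{\uparrow}_{x_0,x_0'}$, after which the remaining $G^{\uparrow}_{y_0,y_0'}$, being distinct coordinate functions, are independent. For the inclusion $\spann(\calG_{\kappa}(\calY, \calE)) \subseteq \spann(\calB_{\kappa}(\calY, \calE))$, given $F \in \calF^{+}_{\kappa}(\calY, \calE)$ I would set $a_{x_0,x_0'} \eqdef \log F(y, u_{x_0'}(y))$ for any non-merging row $y$ of a block $(x_0,x_0') \in \calU$; this is well defined since lumpability of $F$ gives $F(y_1, u_{x_0'}(y_1)) = F(y_1, \calS_{x_0'}) = F(y_2, \calS_{x_0'}) = F(y_2, u_{x_0'}(y_2))$. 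Then $\log F - \sum_{(x_0,x_0') \in \calU} a_{x_0,x_0'} G^{\uparrow}_{x_0,x_0'}$ vanishes on every non-merging-row anchor edge, hence is supported on $\calR$, hence lies in $\spann \set{ G^{\uparrow}_{y_0,y_0'} \colon (y_0,y_0') \in \calR }$, so $\log F \in \spann(\calB_{\kappa}(\calY, \calE))$.

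The crux is the inclusion $\calB_{\kappa}(\calY, \calE) \subseteq \spann(\calG_{\kappa}(\calY, \calE))$, where a first-order tangent-space computation does not suffice and one must exploit the nonlinearity of $\log$ as in the proof of Lemma~\ref{lemma:affine-span-log-lumpable-cone-is-linear}. Fix $F_0 \in \calW_{\kappa}(\calY, \calE)$. For $(y_0,y_0') \in \calR$, choose a second edge $(y_0, y_1')$ out of the merging row $y_0$ into $\calS_{\kappa(y_0')}$, and let $F_c$ agree with $F_0$ except that it redistributes mass between these two edges, keeping the row sum — hence positivity and $\kappa$-lumpability — intact for $c$ in a suitable open interval; then $\log F_c - \log F_0$ runs over an arc of the strictly convex curve $\set{ a e^{u} + b e^{v} = M }$ inside the plane $\spann(G^{\uparrow}_{y_0,y_0'}, G^{\uparrow}_{y_0,y_1'})$, and since such an arc lies on no affine line its linear span is that whole plane, which contains $G^{\uparrow}_{y_0,y_0'}$. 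For $(x_0,x_0') \in \calU$, rescaling the entire block $(x_0,x_0')$ of $F_0$ by any $\lambda > 0$ again preserves positivity and lumpability and yields $(\log \lambda)\, \mathbf{1}_{\calE \cap (\calS_{x_0} \times \calS_{x_0'})} \in \spann(\calG_{\kappa}(\calY, \calE))$; since this block indicator equals $G^{\uparrow}_{x_0,x_0'}$ plus the sum of the $G^{\uparrow}_{y_0,y_0'}$ over the non-anchor edges $(y_0,y_0')$ of the block — all of which lie in $\calR$ and were handled in the previous step — we get $G^{\uparrow}_{x_0,x_0'} \in \spann(\calG_{\kappa}(\calY, \calE))$. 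I expect the main obstacle to be the careful verification that the redistribution curve $F_c$ really stays in $\calF^{+}_{\kappa}(\calY, \calE)$ and that its logarithmic image is not contained in a line; everything else is bookkeeping over the three combinatorial regimes of blocks encoded by $\calM$, $\calU$, and $\calR$.
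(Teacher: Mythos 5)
Your proof is correct and follows essentially the same route as the paper: both obtain the block indicators from a multiplicative rescaling of an entire block of a lumpable matrix, and the single-edge indicators for edges in $\calR$ from a mass redistribution within a merging row (preserving row sums, hence lumpability), before finishing with the same bookkeeping over $\calU$ and $\calR$. The only notable difference is that you extract the single-edge indicators qualitatively---a strictly convex log-image arc through the origin must linearly span the plane---where the paper computes explicit coefficients via an auxiliary row-block indicator, and you spell out the linear independence and generativity steps that the paper declares immediate; both executions are sound.
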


\begin{proof}
For any $(x_0,x_0') \in \calD$,
and for $b \in \set{0, 1}$,
we let $F^{(b)}_{x_0,x_0'} \in \calF(\calY, \calE)$ be such that for any $(y,y') \in \calE$,
\begin{equation*}
    F^{(b)}_{x_0,x_0'}(y,y') \eqdef \begin{cases}
       \frac{e^{b}}{s_{y,\kappa(y')}} &\text{ when } (\kappa(y),\kappa(y')) = (x_0,x_0'), \\
       \frac{1}{s_{y,\kappa(y')}} &\text{ otherwise},
    \end{cases}
\end{equation*}
where $s_{y, \kappa(y')}$ is defined in \eqref{equation:definition-s}.
By construction, $F^{(b)}_{x_0,x_0'} \in \calF_\kappa^{+}(\calY, \calE)$,
\begin{equation*}
    \log F^{(1)}_{x_0,x_0'} - \log F^{(0)}_{x_0,x_0'}  \in \spann(\calG_{\kappa}(\calY, \calE)),
\end{equation*}
where we relied on the fact that $\aff(\calG_{\kappa}(\calY, \calE)) = \spann(\calG_{\kappa}(\calY, \calE))$ (Lemma~\ref{lemma:affine-span-log-lumpable-cone-is-linear}),
and
when $(x_0, x_0') \not \in \calM$, we have
\begin{equation*}
    \log F^{(1)}_{x_0,x_0'} - \log F^{(0)}_{x_0,x_0'} = G^{\uparrow}_{x_0,x_0'}.
\end{equation*}
Next, for $(y_0, y_0') \in \calR$, note that $s_{y_0, \kappa(y_0')} > 1$, and let us construct
\begin{equation*}
    \widetilde{F}^{\uparrow}_{y_0, y'_0}(y,y') \eqdef \begin{cases}
        \frac{1}{2} &\text{ when } (y, y') = (y_0, y_0'), \\
        \frac{1}{2 \left(s_{y_0,\kappa(y'_0)} - 1\right)} &\text{ when } y = y_0, \kappa(y') = \kappa(y'_0),  y' \neq y_0', \\
       \frac{1}{s_{y,\kappa(y')}} &\text{ otherwise}.
    \end{cases}
\end{equation*}
Observe that
\begin{equation*}
    \left(\log \widetilde{F}^{\uparrow}_{y_0, y'_0} - \log F^{(0)}_{\kappa(y_0), \kappa(y_0')} \right)(y,y') = \begin{cases}
       \log\left(\frac{s_{y_0,\kappa(y'_0)}}{2}\right)   &\text{ when } (y, y') = (y_0, y_0'), \\
        \log\left(\frac{s_{y_0,\kappa(y'_0)}}{2\left(s_{y_0,\kappa(y'_0)} - 1\right)}\right)  &\text{ when } y = y_0, \kappa(y') = \kappa(y'_0),  y' \neq y_0', \\
       0 &\text{ otherwise}.
    \end{cases}
\end{equation*}
In order to construct $G_{y_0,y_0'}^{\uparrow}$ we first introduce
\begin{equation*}
\begin{split}
    G^{\uparrow}_{y_0, \kappa(y_0')} &\eqdef 
    \frac{\sum_{\bar{y}_0' \in \calS_{\kappa(y_0')}\colon (y_0, \bar{y}_0') \in \calE} \left(\log \widetilde{F}^{\uparrow}_{y_0, \bar{y}'_0} - \log F^{(0)}_{\kappa(y_0),\kappa(y_0')} \right)}{\log\left(s_{y_0, \kappa(y_0')}\right) - \log(2) + \left(s_{y_0, \kappa(y_0')} - 1\right) \left(\log\left(s_{y_0, \kappa(y_0')}\right)- \log(2) - \log\left(s_{y_0, \kappa(y_0')} - 1\right)\right)}.\\
    \end{split}
    \end{equation*}
    Observe that
    \begin{equation*}
         G^{\uparrow}_{y_0, \kappa(y_0')}(y,y') = \begin{cases}
       1  &\text{ when } y = y_0 \text{ and } \kappa(y') = \kappa(y_0'), \\
       0 &\text{ otherwise}.
    \end{cases}
    \end{equation*}
    We then construct $G_{y_0,y_0'}^{\uparrow}$ as
    \begin{equation*}
        \begin{split}
    G_{y_0,y_0'}^{\uparrow} &= \left(1 - \frac{ \log\left(s_{y_0, \kappa(y_0')} - 1\right)}{\log\left( s_{y_0, \kappa(y_0')}\right) - \log(2)} \right) \left( \frac{\log \widetilde{F}^{\uparrow}_{y_0, y'_0} - \log F^{(0)}_{\kappa(y_0),\kappa(y_0')}}{\log\left(s_{y_0, \kappa(y_0')}\right) - \log(2) - \log\left(s_{y_0, \kappa(y_0')} - 1\right)} - G^{\uparrow}_{y_0, \kappa(y_0')} \right).
    \end{split}
\end{equation*}
We obtain that for $(y,y') \in \calE$,
\begin{equation*}
\begin{split}
    G^{\uparrow}_{y_0, y_0'}(y,y') &= \begin{cases}
       1  &\text{ when } (y,y') = (y_0, y_0'), \\
       0 &\text{ otherwise}.
    \end{cases}
\end{split}
\end{equation*}
The construction of $G^{\uparrow}_{x_0,x_0'}$ for $(x_0,x_0') \in \calU$ follows,
\begin{equation*}
    G^{\uparrow}_{x_0,x_0'} = \log F^{(1)}_{x_0,x_0'} - \log F^{(0)}_{x_0,x_0'} - \sum_{(y_0, y_0') \in \calR \cap \left( \calS_{x_0} \times \calS_{x_0'} \right)} G^{\uparrow}_{y_0, y_0'}.
\end{equation*}
Generativity and linear independence of the system of functions are immediate.
\end{proof}

\begin{example}
\label{example:basis-affine-hull-log-lumpable-cone}
    Consider
    \begin{equation*}
    \calW_{\kappa}(\calY, \calE) \sim \left(\begin{tblr}{c|[dashed]ccc}
  \oplus & \oplus & \plus & \zero  \\\hline[dashed]
  \oplus & \oplus & \plus & \plus \\
  \oplus & \zero & \oplus & \zero \\
  \oplus & \zero & \zero & \oplus \\
\end{tblr}\right),
\end{equation*}
 where $\oplus$ indicates that the edge is fixed as an anchor---that is it belongs to $E_{x,x'}$ for some $(x,x') \in \calD$.
A basis for $\spann(\calG_\kappa(\calY, \calE))$ is then given by
    \begin{equation*}
    \begin{split}
    \left(\begin{tblr}{c|[dashed]ccc}
  1 & \zero & \zero & \zero  \\\hline[dashed]
  \zero & \zero & \zero & \zero \\
  \zero & \zero & \zero & \zero \\
  \zero & \zero & \zero & \zero \\
\end{tblr}\right), \left(\begin{tblr}{c|[dashed]ccc}
  \zero & 1 & \zero & \zero  \\\hline[dashed]
  \zero & \zero & \zero & \zero \\
  \zero & \zero & \zero & \zero \\
  \zero & \zero & \zero & \zero \\
\end{tblr}\right),\left(\begin{tblr}{c|[dashed]ccc}
  \zero & \zero & 1 & \zero  \\\hline[dashed]
  \zero & \zero & \zero & \zero \\
  \zero & \zero & \zero & \zero \\
  \zero & \zero & \zero & \zero \\
\end{tblr}\right), \left(\begin{tblr}{c|[dashed]ccc}
  \zero & \zero & \zero & \zero  \\\hline[dashed]
  1 & \zero & \zero & \zero \\
  1 & \zero & \zero & \zero \\
  1 & \zero & \zero & \zero \\
\end{tblr}\right),
\end{split}
\end{equation*}
\begin{equation*}
    \begin{split}
 \left(\begin{tblr}{c|[dashed]ccc}
  \zero & \zero & \zero & \zero  \\\hline[dashed]
  \zero & 1 & \zero & \zero \\
  \zero & \zero & 1 & \zero \\
  \zero & \zero & \zero & 1 \\
\end{tblr}\right), \left(\begin{tblr}{c|[dashed]ccc}
  \zero & \zero & \zero & \zero  \\\hline[dashed]
  \zero & 1 & \zero & \zero \\
  \zero & \zero & \zero & \zero \\
  \zero & \zero & \zero & \zero \\
\end{tblr}\right), \left(\begin{tblr}{c|[dashed]ccc}
  \zero & \zero & \zero & \zero  \\\hline[dashed]
  \zero & \zero & 1 & \zero \\
  \zero & \zero & \zero & \zero \\
  \zero & \zero & \zero & \zero \\
\end{tblr}\right), \left(\begin{tblr}{c|[dashed]ccc}
  \zero & \zero & \zero & \zero  \\\hline[dashed]
  \zero & \zero & \zero & 1 \\
  \zero & \zero & \zero & \zero \\
  \zero & \zero & \zero & \zero \\
\end{tblr}\right).
\end{split}
\end{equation*}
\end{example}

\begin{example}
    Consider
    \begin{equation*}
    \calW_{\kappa}(\calY, \calE) \sim \left(\begin{tblr}{cc|[dashed]cc}
  \oplus & \zero & \oplus & \zero \\
  \zero & \oplus & \zero & \oplus \\\hline[dashed]
  \zero & \oplus & \oplus & \plus \\
  \oplus & \zero & \zero & \oplus \\
\end{tblr}\right).
\end{equation*}
A basis for $\spann(\calG_{\kappa}(\calY, \calE))$ is then given by
\begin{equation*}
\begin{split}
    \left(\begin{tblr}{cc|[dashed]cc}
  1 & \zero & \zero & \zero \\
  \zero & 1 & \zero & \zero \\\hline[dashed]
  \zero & \zero & \zero & \zero \\
  \zero & \zero & \zero & \zero \\
\end{tblr}\right), \left(\begin{tblr}{cc|[dashed]cc}
  \zero & \zero & 1 & \zero \\
  \zero & \zero & \zero & 1 \\\hline[dashed]
  \zero & \zero & \zero & \zero \\
  \zero & \zero & \zero & \zero \\
\end{tblr}\right),\left(\begin{tblr}{cc|[dashed]cc}
  \zero & \zero & \zero & \zero \\
  \zero & \zero & \zero & \zero \\\hline[dashed]
  \zero & 1 & \zero & \zero \\
  1 & \zero & \zero & \zero \\
\end{tblr}\right),\\ 
\left(\begin{tblr}{cc|[dashed]cc}
  \zero & \zero & \zero & \zero \\
  \zero & \zero & \zero & \zero \\\hline[dashed]
  \zero & \zero & 1 & \zero \\
  \zero & \zero & \zero & 1 \\
\end{tblr}\right),\left(\begin{tblr}{cc|[dashed]cc}
  \zero & \zero & \zero & \zero \\
  \zero & \zero & \zero & \zero \\\hline[dashed]
  \zero & \zero & 1 & \zero \\
  \zero & \zero & \zero & \zero \\
\end{tblr}\right), \left(\begin{tblr}{cc|[dashed]cc}
  \zero & \zero & \zero & \zero \\
  \zero & \zero & \zero & \zero \\\hline[dashed]
  \zero & \zero & \zero & 1 \\
  \zero & \zero & \zero & \zero \\
\end{tblr}\right).
\end{split}
\end{equation*}

\end{example}

We recall the definition of the exponential hull of a family of stochastic matrices $\calV \subset \calW(\calY, \calE)$ as the smallest exponential family which contains $\calV$.
\begin{definition}[Exponential hull, {\citealp[Definition~7]{wolfer2021information}}]
\label{definition:e-hull}\label{nom:e-hull}
Let $\calV$ be a sub-family of $\calW(\calY, \calE)$.
\begin{equation*}
\begin{split}
\ehull(\mathcal{V}) \eqdef \Bigg\{ \stoch(\widetilde{P}) &\colon \log \widetilde{P} = \sum_{i = 1}^{k} \alpha_i \log P_i, k \in \bbN, \alpha \in \bbR^k, \sum_{i = 1}^{k} \alpha_i = 1, P_1, \dots P_k \in \calV \Bigg\},    
\end{split}
\end{equation*}
where $\stoch$-normalization was introduced in Definition~\ref{definition:s-normalization}.

\end{definition} 
In particular, $\ehull(\calV) = \calV$ if and only if $\calV$ forms an e-family.
 For instance, it is known that the exponential hull of symmetric stochastic matrices yields the reversible family \citep[Theorem~9]{wolfer2021information}.
In our analysis, it will be convenient to further define $\calH_\kappa(\calY, \calE)$ and $\overline{\calH}_\kappa(\calY, \calE)$ as follows.
\begin{equation*}
\begin{split}
\calH_\kappa(\calY, \calE) &\eqdef \set{ \log P \colon P \in \calW_{\kappa}(\calY, \calE)} \subset \calG_{\kappa}(\calY, \calE), \\
\overline{\calH}_\kappa(\calY, \calE) &\eqdef \set{ \log P \colon P \in \ehull(\calW_{\kappa}(\calY, \calE))}.
    \end{split}
\end{equation*}
Observe that $\overline{\calH}_\kappa(\calY, \calE)$ is isomorphic to the affine hull of $\calH_\kappa(\calY, \calE)$ in $\calF(\calY, \calE)/\calN(\calY, \calE)$.

\begin{proposition}
\label{proposition:log-exponential-hull-isomorphic-to-quotient}
    \begin{equation*}
       \overline{\calH}_\kappa(\calY, \calE) \cong \left(\spann(\calG_\kappa(\calY, \calE)) \oplus \calN(\calY, \calE) \right) / \calN(\calY, \calE).
    \end{equation*}
\end{proposition}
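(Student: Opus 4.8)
The plan is to use the observation recorded just before the statement: $\overline{\calH}_\kappa(\calY,\calE)$ is isomorphic to the affine hull of $\calH_\kappa(\calY,\calE)$ taken inside the quotient $\calG(\calY,\calE) = \calF(\calY,\calE)/\calN(\calY,\calE)$. Let $q \colon \calF(\calY,\calE) \to \calG(\calY,\calE)$ denote the canonical projection. Since $q$ is linear it commutes with the affine-hull operation, and $q(U) = (U + \calN(\calY,\calE))/\calN(\calY,\calE)$ for every linear subspace $U$; hence the proposition reduces to the identity
\[
    \aff\!\bigl(q(\calH_\kappa(\calY,\calE))\bigr) \;=\; q\!\bigl(\spann(\calG_\kappa(\calY,\calE))\bigr).
\]

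The crux is to show that $\calH_\kappa(\calY,\calE)$ and $\calG_\kappa(\calY,\calE)$ have the same image under $q$. One inclusion is immediate: $\calW_\kappa(\calY,\calE) \subseteq \calF_\kappa^+(\calY,\calE)$ gives $\calH_\kappa(\calY,\calE) \subseteq \calG_\kappa(\calY,\calE)$, so $q(\calH_\kappa(\calY,\calE)) \subseteq q(\calG_\kappa(\calY,\calE))$. For the reverse inclusion I would fix $F \in \calF_\kappa^+(\calY,\calE)$ and invoke Proposition~\ref{proposition:stochastic-rescaling-and-lumping-commute-and-partition-constant-eigenvector}--\enumi to obtain $\stoch(F) \in \calW_\kappa(\calY,\calE)$; the explicit form of $\stoch$-normalization (Definition~\ref{definition:s-normalization}, as written out in the proof of that proposition) shows that $\log F - \log\stoch(F)$ is the function $(y,y') \mapsto \log v^\flat(\kappa(y)) - \log v^\flat(\kappa(y')) + \log\rho^\flat$, which has the shape $f(y')-f(y)+c$ and so belongs to $\calN(\calY,\calE)$. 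Therefore $q(\log F) = q(\log\stoch(F)) \in q(\calH_\kappa(\calY,\calE))$, establishing $q(\calG_\kappa(\calY,\calE)) \subseteq q(\calH_\kappa(\calY,\calE))$.

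Granting this equality, I would conclude by taking affine hulls and pushing $q$ through:
\[
    \aff\!\bigl(q(\calH_\kappa(\calY,\calE))\bigr) = \aff\!\bigl(q(\calG_\kappa(\calY,\calE))\bigr) = q\!\bigl(\aff(\calG_\kappa(\calY,\calE))\bigr) = q\!\bigl(\spann(\calG_\kappa(\calY,\calE))\bigr),
\]
where the last step is Lemma~\ref{lemma:affine-span-log-lumpable-cone-is-linear}. Unfolding $q(\spann(\calG_\kappa(\calY,\calE))) = (\spann(\calG_\kappa(\calY,\calE)) + \calN(\calY,\calE))/\calN(\calY,\calE)$ then matches the displayed right-hand side, the subspace sum being written $\oplus$ even though it is not internal direct (indeed $\spann(\calG_\kappa(\calY,\calE))$ already contains the gradient-type functions $g(y)-g(y')$ with $g$ constant on the blocks of $\kappa$, which lie in $\calN(\calY,\calE)$).

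I expect no real obstacle: the substance is carried entirely by Proposition~\ref{proposition:stochastic-rescaling-and-lumping-commute-and-partition-constant-eigenvector} --- that $\stoch$-normalization sends the lumpable cone onto the lumpable stochastic matrices while keeping each element in the same $\calN(\calY,\calE)$-coset --- and by Lemma~\ref{lemma:affine-span-log-lumpable-cone-is-linear}. The only delicate point is formal: one must keep the affine hull taken in $\calF(\calY,\calE)$ distinct from the one taken in $\calG(\calY,\calE)$, and check that the isomorphism quoted from the remark really identifies $\overline{\calH}_\kappa(\calY,\calE)$ with $\aff(q(\calH_\kappa(\calY,\calE)))$, which is precisely the definition of $\ehull$ transported along the diffeomorphism $\stoch\circ\exp\colon \calG(\calY,\calE)\to\calW(\calY,\calE)$.
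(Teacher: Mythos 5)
Your proposal is correct and follows essentially the same route as the paper: both arguments rest on the identification of $\overline{\calH}_\kappa(\calY,\calE)$ with the affine hull of $\calH_\kappa(\calY,\calE)$ in the quotient, on the fact that $\log F - \log \stoch(F) \in \calN(\calY,\calE)$ (so that $\calH_\kappa$ and $\calG_\kappa$ have the same image modulo $\calN$), and on Lemma~\ref{lemma:affine-span-log-lumpable-cone-is-linear} to replace the affine hull by $\spann(\calG_\kappa(\calY,\calE))$. Your formulation via the projection $q$ merely repackages the paper's two element-wise inclusions, and your parenthetical remark that the sum with $\calN(\calY,\calE)$ is not internally direct is a valid observation that does not affect the statement.
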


\begin{proof}
    Let $\overline{H} \in \overline{\calH}_\kappa(\calY, \calE)$. There exists $\overline{G} \in \spann(\calG_{\kappa}(\calY, \calE))$ such that
    \begin{equation*}
    \exp\left(\overline{H}\right) = \stoch\left( \exp \left(\overline{G}\right) \right).
    \end{equation*}
It follows that we can write
\begin{equation*}
    \overline{H} = \overline{G} + N,
\end{equation*}
for some $N \in \calN(\calY, \calE)$. As a result,
\begin{equation*}
    \overline{H} \in \left(\spann(\calG_\kappa(\calY, \calE)) \oplus \calN(\calY, \calE) \right) / \calN(\calY, \calE).
\end{equation*}
    Conversely, let $\overline{G} \in \spann(\calG_{\kappa}(\calY, \calE))$ and $N \in \calN(\calY, \calE)$. It holds that
    \begin{equation*}
        \stoch \left( \exp \left(\overline{G} + N \right)  \right) \in \ehull(\calW_{\kappa}(\calY, \calE)),
    \end{equation*}
    hence
    \begin{equation*}
        \left[\overline{G} + N \right]_{\calN(\calY, \calE)} \in \overline{\calH}_\kappa(\calY, \calE).
    \end{equation*}
\end{proof}

Our main theorem will rely on the following two properties of topological and smooth manifolds.
\begin{proposition}[{\cite[Proposition~4.1]{lee2010introduction}}]
\label{proposition:open-closed-argument}
     A topological space $\calM$ is connected if and only if the only subsets
of $\calM$ that are both open and closed in $\calM$ are $\emptyset$ and $\calM$ itself.
\end{proposition}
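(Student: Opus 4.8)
The plan is to prove the two implications by directly unwinding the definition of connectedness, recalling that a topological space $\calM$ is \emph{disconnected} precisely when it admits a \emph{separation}, that is, a decomposition $\calM = U \uplus V$ into two nonempty, disjoint, open subsets.

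For the forward direction I would assume $\calM$ is connected and let $A \subseteq \calM$ be clopen. Since $A$ is closed, its complement $\calM \setminus A$ is open; since $A$ is open, $\calM \setminus A$ is closed. Hence $\calM = A \uplus (\calM \setminus A)$ exhibits $\calM$ as a disjoint union of two open sets. Connectedness forbids this from being a genuine separation, so one of the two pieces must be empty, giving $A = \emptyset$ or $A = \calM$.

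For the converse I would argue by contraposition: if $\calM$ is disconnected, fix a separation $\calM = U \uplus V$ with $U, V$ nonempty and open. Then $U = \calM \setminus V$ is the complement of an open set, hence closed, and it is also open, so $U$ is clopen with $U \neq \emptyset$ and $U \neq \calM$ (the latter because $V \neq \emptyset$). This produces a nontrivial clopen subset, which is the contrapositive of the claim.

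I do not anticipate any real obstacle: the statement is essentially a reformulation of the definition of connectedness, and the only point to keep straight is the elementary duality ``the complement of an open set is closed, and the complement of a closed set is open,'' which is exactly what converts a separation into a nontrivial clopen set and back.
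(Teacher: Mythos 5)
Your proof is correct and is the standard argument; the paper itself offers no proof of this statement, simply citing it from Lee's \emph{Introduction to Topological Manifolds}, and your two-directional argument via the duality between separations and nontrivial clopen sets is exactly the textbook proof found there.
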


\begin{proposition}[{\cite[Proposition~5.1]{lee2013smooth}}]
\label{proposition:open-submanifolds}
Suppose $\calM$ is a smooth manifold. The embedded submanifolds of codimension 0 in $\calM$ are exactly the open submanifolds.  
\end{proposition}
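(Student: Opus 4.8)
The plan is to establish the two inclusions separately, using the local slice characterization of embedded submanifolds as the main tool. The easy direction is that an open subset $U \subset \calM$, equipped with its canonical structure as an open submanifold, is an embedded submanifold of codimension $0$: every chart of $\calM$ whose domain meets $U$ restricts to a chart of $U$, so the inclusion is a smooth embedding and $\dim U = \dim \calM$. This needs no further argument.

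For the converse, I would start from an embedded submanifold $S \subset \calM$ with $\dim S = \dim \calM = n$ and fix a point $p \in S$. By the local slice criterion for embedded submanifolds \cite[Chapter~5]{lee2013smooth}, there is a smooth chart $(V,\varphi)$ for $\calM$ around $p$ in which $\varphi(S \cap V)$ is the intersection of $\varphi(V)$ with a $k$-dimensional coordinate subspace of $\bbR^n$, where $k = \dim S$. Since $k = n$, this subspace is all of $\bbR^n$, forcing $\varphi(S \cap V) = \varphi(V)$ and hence $S \cap V = V$. Thus every point of $S$ has an $\calM$-open neighbourhood contained in $S$, so $S$ is open in $\calM$.

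The remaining step is to verify that the smooth structure $S$ inherits qua embedded submanifold agrees with the one it inherits qua open submanifold of $\calM$. This falls out of the same construction: the slice charts $(V,\varphi)$ obtained above are genuine charts of $\calM$ whose restrictions to $S \cap V = V$ are charts for both structures, so the two maximal atlases on $S$ coincide. Alternatively, one can observe that the inclusion $\iota \colon S \hookrightarrow \calM$ is an injective immersion between manifolds of equal dimension, so each differential $d\iota_p$ is a linear isomorphism; the inverse function theorem then makes $\iota$ a local diffeomorphism, hence an open map, and an injective open local diffeomorphism that is a topological embedding is a diffeomorphism onto an open submanifold.

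I do not expect a genuine obstacle in this argument; the only point requiring care is conventions --- one must fix ``embedded submanifold'' to mean an immersed submanifold that is simultaneously a topological embedding (equivalently, one admitting local slice charts), and one must use that $\calM$, and therefore $S$, is a manifold without boundary so that the coordinate-subspace form of the slices applies directly. If boundary were permitted, slices modeled on half-spaces would need to be distinguished, but this complication does not arise in the setting of the paper.
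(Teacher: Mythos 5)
Your proof is correct and is essentially the standard argument (the one in Lee's book itself): the paper does not prove this proposition but simply imports it from \citet{lee2013smooth}, so there is no in-paper proof to diverge from. Both directions — the easy one for open subsets and the local $k$-slice argument with $k=n$ forcing $S\cap V=V$, plus the check that the two smooth structures agree — are handled correctly and with the right amount of care about conventions.
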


\begin{theorem}[General dimension criterion for e-families]
\label{theorem:general-dimensional-criterion}
    Let $\calV$ be a submanifold (without boundary) of $\calW(\calY, \calE)$ such that
    \begin{enumerate}
        \item[\enumi] $\dim \calV = \dim \ehull(\calV)$.
        \item[\enumii] There exists some finite dimensional (Hausdorff) topological vector space $\calU$, a linear map 
        $$\phi \colon \calF(\calY, \calE) \to \calU,$$ 
        and $C \in \calU$ such that 
        $$\calV = \calW(\calY, \calE) \cap \calL,$$ 
    \end{enumerate}
    where 
    \begin{equation*}
        \calL \eqdef \set{ F \in \calF(\calY, \calE), \phi(F) = C }.
    \end{equation*}
    Then $\calV$ forms an e-family.
\end{theorem}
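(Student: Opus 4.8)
The plan is to prove the equivalent statement $\calV=\ehull(\calV)$; indeed, by the characterization recorded immediately after Definition~\ref{definition:e-hull}, $\calV$ forms an e-family precisely when it equals its own e-hull. The entire argument takes place inside the manifold $\ehull(\calV)$, which by the correspondence between e-families and linear subspaces of $\calG(\calY,\calE)$ through the diffeomorphism $\stoch\circ\exp$ (see the discussion after Definition~\ref{definition:e-family}, and \citep[Theorem~2]{nagaoka2005exponential}) is a connected embedded submanifold of $\calW(\calY,\calE)$, of dimension equal to $\dim\calV$ by hypothesis \enumi. I would show that $\calV$ is simultaneously open and closed in $\ehull(\calV)$, and then invoke Proposition~\ref{proposition:open-closed-argument}, together with $\calV\neq\emptyset$ (the empty case being degenerate; in the applications $\calW_\kappa(\calY,\calE)\neq\emptyset$), to conclude $\calV=\ehull(\calV)$.

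For closedness, the key is the set identity $\calV=\ehull(\calV)\cap\calL$. One inclusion is immediate: $\calV\subseteq\ehull(\calV)$ by definition of the e-hull (take $k=1$, $\alpha_1=1$) and $\calV\subseteq\calL$ by hypothesis \enumii. For the reverse, since $\ehull(\calV)\subseteq\calW(\calY,\calE)$ we get $\ehull(\calV)\cap\calL\subseteq\calW(\calY,\calE)\cap\calL=\calV$. Next, $\calL=\phi^{-1}(\{C\})$ is closed in $\calF(\calY,\calE)$: $\phi$ is linear, hence continuous, from the finite-dimensional space $\calF(\calY,\calE)$ into the finite-dimensional Hausdorff space $\calU$, and $\{C\}$ is closed in $\calU$ (equivalently, $\calL$ is simply an affine subspace of $\calF(\calY,\calE)$). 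Since $\ehull(\calV)$ is embedded, its manifold topology coincides with the subspace topology inherited from the ambient vector space $\calF(\calY,\calE)$, so $\calV=\ehull(\calV)\cap\calL$ is closed in $\ehull(\calV)$.

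For openness, I would first verify that $\calV$ is an embedded submanifold of $\ehull(\calV)$, and not merely of the ambient $\calW(\calY,\calE)$. Topologically this holds because both $\calV$ and $\ehull(\calV)$ carry the subspace topology from $\calW(\calY,\calE)$, so the inclusion $\calV\hookrightarrow\ehull(\calV)$ is a topological embedding; smoothly, it is an injective immersion, since a smooth map whose image lands in an embedded submanifold stays smooth upon restriction of its codomain and its differential stays injective. By hypothesis \enumi this embedded submanifold (which is boundaryless, as $\calV$ is) has codimension $0$ in $\ehull(\calV)$, so Proposition~\ref{proposition:open-submanifolds} identifies it with an open submanifold of $\ehull(\calV)$, in particular an open subset; alternatively, an injective immersion between manifolds of equal dimension is a local diffeomorphism by the inverse function theorem, hence an open map, yielding the same conclusion. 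Combining openness, closedness, nonemptiness and connectedness of $\ehull(\calV)$ gives $\calV=\ehull(\calV)$, hence $\calV$ is an e-family.

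The step I expect to be the main obstacle is precisely the passage from ``$\calV$ is an embedded submanifold of $\calW(\calY,\calE)$'' to ``$\calV$ is an embedded submanifold of $\ehull(\calV)$'', so that the codimension-$0$ criterion of Proposition~\ref{proposition:open-submanifolds} applies with $\ehull(\calV)$ as ambient manifold. This rests on two standard but nontrivial facts from smooth manifold theory (restriction of the codomain of a smooth map to an embedded submanifold, and equidimensional injective immersions being local diffeomorphisms) and, beforehand, on confirming that $\ehull(\calV)$ is genuinely an embedded---not merely immersed---connected submanifold of $\calW(\calY,\calE)$, which follows because $\stoch\circ\exp$ realizes it as the diffeomorphic image of a linear subspace of $\calG(\calY,\calE)$. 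Once these topological and smooth-structure points are settled, the clopen argument and the conclusion $\calV=\ehull(\calV)$ are routine.
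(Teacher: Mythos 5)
Your proposal is correct and follows essentially the same route as the paper: establish the identity $\calV=\ehull(\calV)\cap\calL$ to get closedness, use the codimension-$0$ criterion (Proposition~\ref{proposition:open-submanifolds}) for openness, and conclude via the clopen characterization of connectedness. The only notable difference is that you supply connectedness of $\ehull(\calV)$ directly (as the diffeomorphic image of an affine subspace under $\stoch\circ\exp$), which is in fact the ingredient the clopen argument requires, whereas the paper argues via convexity and path-connectedness of $\calV$ itself; your added care in checking that $\calV$ is an embedded submanifold of $\ehull(\calV)$, not merely of $\calW(\calY,\calE)$, fills a step the paper leaves implicit.
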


\begin{proof}
    Here, we regard $\calV$ as a smooth submanifold of $\ehull (\calV)$, endowed with the subspace topology induced from $\calF(\calY, \calE) \cong \bbR^\calE$.
    Our strategy is to show that under the assumptions of the theorem $\calV = \ehull(\calV)$, that is $\calV$ forms an e-family. 
    Since $\calL$ is defined by a set of linear equality constraints it must be convex. Indeed,
    let $F_0, F_1 \in \calL$, then $\phi(F_0) = C$ and $\phi(F_1) = C$.
    Then, for any $t \in [0,1]$, $F_t = (1 - t) F_0 + t F_1$ satisfies
    \begin{equation*}
        \phi(F_t) = \phi((1 - t) F_0 + t F_1) = (1 - t) \phi(F_0) + t \phi(F_1) = (1 - t) C + t C = C,
    \end{equation*}
    thus $\calL$ is convex. Similarly, $\calW(\calY, \calE)$ is convex and so is the intersection $\calW(\calY, \calE) \cap \calL$.
    As a result of convexity, $\calV$ is path-connected. We can therefore rely on an open-closed argument (Proposition~\ref{proposition:open-closed-argument}). Namely, if we can show that $\calV$ is clopen, our claim will follow. 
    \begin{enumerate}[$\diamond$]
        \item \textbf{$\calV$ is closed in $\ehull(\calV)$:}
        Since $\calV \subset \ehull(\calV)$ and $\calV \subset \calL$,
        we can write
        \begin{equation*}
            \calV = \calW(\calY, \calE) \cap \calL = \ehull(\calV) \cap \calL.
        \end{equation*}
        Since $\calL$ is defined by a finite number of linear equality constraints, it is a closed set. Indeed, since $\phi$ is a linear map on a finite dimensional space, it is continuous. Moreover, the preimage of a closed set under a continuous function is closed. Since $\calU$ is Hausdorff, the singleton $\set{C}$ is closed, and it follows that $\calL$ is closed in $\calF(\calY, \calE)$. 
        Hence $\calV$ is closed in $\ehull(\calV)$ by characterization of closed sets in the subspace topology.
        \item \textbf{$\calV$ is open in $\ehull(\calV)$:} Since by assumption, $\calV$ is a smooth submanifold (without boundary) of $\ehull(\calV)$ and the co-dimension of $\calV$ in $\ehull(\calV)$ is $0$, it must be from Proposition~\ref{proposition:open-submanifolds} that $\calV$ is open.
    \end{enumerate}
    Since $\calV$ is clopen in $\ehull(\calV)$, it must be that $\calV = \ehull(\calV)$ and $\calV$ forms an e-family.
\end{proof}

\begin{theorem}[Dimensional criterion for the $\kappa$-lumpable family]
\label{theorem:dimensional-criterion}
   $\calW_{\kappa}(\calY, \calE)$ forms an e-family if and only if 
   \begin{equation*}
        \dim \left(\spann\left(\calG_\kappa(\calY, \calE)\right) \oplus \calN(\calY, \calE) \right)
        = \abs{\calE} + \abs{\calY} + \abs{\calD} - \abs{\calX} - \sum_{(x,x') \in \calD} \abs{\calS_x}.
   \end{equation*}
\end{theorem}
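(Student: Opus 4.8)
The plan is to reduce the claimed identity to the hypothesis of the general dimension criterion, Theorem~\ref{theorem:general-dimensional-criterion}, applied to the family $\calV = \calW_\kappa(\calY, \calE)$. First I would observe that the right-hand side of the displayed equation is merely a rewriting of the condition $\dim \calW_\kappa(\calY, \calE) = \dim \ehull(\calW_\kappa(\calY, \calE))$. On one hand, by the foliation formula of Proposition~\ref{proposition:foliation-of-lumpable-kernels},
\[
\dim \calW_\kappa(\calY, \calE) = \abs{\calE} - \sum_{(x,x') \in \calD} \abs{\calS_x} + \abs{\calD} - \abs{\calX}.
\]
On the other hand, since $\stoch \circ \exp \colon \calG(\calY, \calE) \to \calW(\calY, \calE)$ is a diffeomorphism and every e-family is the diffeomorphic image of an affine subspace of $\calG(\calY, \calE)$ and therefore has the same dimension as that subspace, the manifold dimension of $\ehull(\calW_\kappa(\calY, \calE))$ equals $\dim \overline{\calH}_\kappa(\calY, \calE)$; by Proposition~\ref{proposition:log-exponential-hull-isomorphic-to-quotient} together with the fact that $\calN(\calY, \calE)$ is $\abs{\calY}$-dimensional (Definition~\ref{definition:anti-shift-functions}),
\[
\dim \ehull(\calW_\kappa(\calY, \calE)) = \dim\big(\spann(\calG_\kappa(\calY, \calE)) \oplus \calN(\calY, \calE)\big) - \abs{\calY}.
\]
Equating the two displays and rearranging yields exactly the identity in the statement, so it remains to prove that $\calW_\kappa(\calY, \calE)$ forms an e-family if and only if $\dim \calW_\kappa(\calY, \calE) = \dim \ehull(\calW_\kappa(\calY, \calE))$.

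The forward implication is immediate: if $\calW_\kappa(\calY, \calE)$ is an e-family then it coincides with its own e-hull, and the two dimensions trivially agree. For the converse, I would invoke Theorem~\ref{theorem:general-dimensional-criterion} with $\calV = \calW_\kappa(\calY, \calE)$. Its hypothesis \enumi is precisely the assumed dimension match. For hypothesis \enumii, I would use the matrix form of the Kemeny--Snell characterization~\eqref{eq:kemeny-snell-condition-matrix-form}, namely $P \in \calW_\kappa(\calY, \calE)$ if and only if $(K\overline{K}\trn - I) P K = 0$. Taking $\calU$ to be the finite-dimensional (Hausdorff) space of $\abs{\calY} \times \abs{\calX}$ real matrices, the linear map $\phi \colon \calF(\calY, \calE) \to \calU$, $F \mapsto (K\overline{K}\trn - I) F K$, and $C = 0$, we obtain $\calW_\kappa(\calY, \calE) = \calW(\calY, \calE) \cap \calL$ with $\calL = \set{F \in \calF(\calY, \calE) \colon \phi(F) = C}$. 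Finally, $\calW_\kappa(\calY, \calE)$ is a smooth submanifold without boundary of $\calW(\calY, \calE)$, which is part of the content of Proposition~\ref{proposition:foliation-of-lumpable-kernels} (exhibiting it as a foliated manifold of the stated dimension). Theorem~\ref{theorem:general-dimensional-criterion} then yields that $\calW_\kappa(\calY, \calE)$ forms an e-family, completing the argument.

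The step I expect to require the most care is the dimension accounting for the e-hull: one must be certain that the \emph{manifold} dimension of $\ehull(\calW_\kappa(\calY, \calE))$ equals the linear-algebraic dimension of the quotient $\big(\spann(\calG_\kappa(\calY, \calE)) \oplus \calN(\calY, \calE)\big) / \calN(\calY, \calE)$, which rests on $\stoch \circ \exp$ being a diffeomorphism onto $\calW(\calY, \calE)$ and on $\overline{\calH}_\kappa(\calY, \calE)$ being (isomorphic to) the affine hull of $\calH_\kappa(\calY, \calE)$ in $\calF(\calY, \calE)/\calN(\calY, \calE)$, as recorded just before Proposition~\ref{proposition:log-exponential-hull-isomorphic-to-quotient}. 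A secondary point worth a remark is that the smooth structure on $\calW_\kappa(\calY, \calE)$ inherited from the subspace topology of $\calF(\calY, \calE) \cong \bbR^\calE$ agrees with the one coming from the foliation, and that $\calW_\kappa(\calY, \calE)$ is embedded in $\calW(\calY, \calE)$, so that the general criterion applies verbatim.
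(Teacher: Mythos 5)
Your proposal is correct and follows essentially the same route as the paper: both verify the two hypotheses of Theorem~\ref{theorem:general-dimensional-criterion} via the Kemeny--Snell matrix form \eqref{eq:kemeny-snell-condition-matrix-form} and the foliation structure, and both translate the dimension-matching hypothesis through Proposition~\ref{proposition:log-exponential-hull-isomorphic-to-quotient}. If anything, you make the dimension accounting (subtracting $\dim \calN(\calY,\calE) = \abs{\calY}$ and equating with the foliation formula) more explicit than the paper does.
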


\begin{proof} 

From the matrix form of the Kemeny--Snell condition for lumpability---refer to\eqref{eq:kemeny-snell-condition-matrix-form}---we can write that
\begin{equation*}
    \calW_\kappa(\calY, \calE) = \calW(\calY,\calE) \cap \set{ F \in \calF(\calY, \calE) \colon \phi(F) = C },
\end{equation*}
where $\phi \colon \calF(\calY, \calE) \to \bbR^{\abs{\calY} \times \abs{\calX}}$ is the linear map defined for $F \in \calF(\calY, \calE)$ by
\begin{equation*}
    \phi(F) = (K \overline{K} \trn - I) F K \qquad \text{ and } \qquad C = 0_{\abs{\calY} \times \abs{\calX}},
\end{equation*}
and where $K$ and $\overline{K}$ are defined in Section~\ref{section:lumpability}.
Finally, $\calW_\kappa(\calY,\calE)$ is a manifold without (manifold) boundary since, for every point, 
we can take a coordinate chart that is diffeomorphic to an open set in $\bbR^d$; for instance, we can take a coordinate
that is induced by a coordinate of $\calW(\calX,\calD)$ and a coordinate of  embeddings.
Finally, from Proposition~\ref{proposition:log-exponential-hull-isomorphic-to-quotient}, the dimension condition of the theorem corresponds to the first condition of Theorem~\ref{theorem:general-dimensional-criterion}.
The claim therefore follows from an application of Theorem~\ref{theorem:general-dimensional-criterion}.
\end{proof}

A basis for $\calN(\calY, \calE)$ can be constructed as follows.
For $y_0 \in \calY$, we define $N_{y_0} \in \calF(\calY, \calE)$ be such that for any $(y,y') \in \calE$,
\begin{equation*}
    N_{y_0}(y,y') = \pred{y' = y_0} - \pred{y = y_0}.
\end{equation*}
Let $y_\star \in \calY$ be arbitrary; then 
\begin{equation*}
    \set{C} \cup  \set{ N_{y_0} \colon y_0 \in \calY \setminus \set{y_\star}},
\end{equation*}
where $C \equiv 1$ is the constant unit function over $\calE$,
forms a basis for $\calN(\calY, \calE)$. As a consequence, the bases for $\spann(\calG_\kappa(\calY, \calE))$ and $\calN(\calY, \calE)$ can be concatenated, and determining the rank of the resulting family can be obtained algorithmically, for instance with Gaussian elimination. Before performing this somewhat costly computation, one can also proceed with the following preliminary verification which does not require computing the rank of a family of functions.

\begin{corollary}
\label{corollary:dimensional-criterion-simplified}
If 
    \begin{equation*}
        \underbrace{\sum_{(x,x') \in \calD} \abs{\calS_x}}_{ \text{\# of rows in every block}} > \underbrace{\left( \abs{\calD} - \abs{\calU} \right)}_{  \substack{\text{\# completely} \\ \text{merging blocks}} } + \underbrace{\left(\abs{\calY} - \abs{\calX}\right)}_{\substack{\text{state space} \\ \text{compression}}} + \underbrace{\left(\abs{\calE} - \abs{\calR} \right)}_{  \substack{\text{\# of non-merging}\\ \text{transitions}}},
    \end{equation*}
    where $\calU$ and $\calR$ are defined in \eqref{equation:block-type-nomenclature},
    then $\calW_{\kappa}(\calY, \calE)$ does not form an e-family.
\end{corollary}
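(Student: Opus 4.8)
The plan is to derive the corollary as an immediate dimensional consequence of Theorem~\ref{theorem:dimensional-criterion}, arguing by contraposition. So I would assume that $\calW_\kappa(\calY,\calE)$ forms an e-family and show that the displayed inequality must then fail, i.e.\ that
\begin{equation*}
    \sum_{(x,x') \in \calD} \abs{\calS_x} \leq \left( \abs{\calD} - \abs{\calU} \right) + \left(\abs{\calY} - \abs{\calX}\right) + \left(\abs{\calE} - \abs{\calR} \right).
\end{equation*}

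Under the e-family hypothesis, Theorem~\ref{theorem:dimensional-criterion} supplies the exact identity
\begin{equation*}
    \dim\left(\spann(\calG_\kappa(\calY, \calE)) \oplus \calN(\calY, \calE)\right) = \abs{\calE} + \abs{\calY} + \abs{\calD} - \abs{\calX} - \sum_{(x,x') \in \calD} \abs{\calS_x}.
\end{equation*}
Next, since $\spann(\calG_\kappa(\calY, \calE))$ is a linear subspace of $\spann(\calG_\kappa(\calY, \calE)) \oplus \calN(\calY, \calE)$, I would bound the left-hand side from below by $\dim \spann(\calG_\kappa(\calY, \calE))$, which equals $\abs{\calU} + \abs{\calR}$ by the proposition exhibiting the basis $\calB_\kappa(\calY, \calE)$. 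Combining this lower bound with the identity above gives $\abs{\calE} + \abs{\calY} + \abs{\calD} - \abs{\calX} - \sum_{(x,x') \in \calD} \abs{\calS_x} \geq \abs{\calU} + \abs{\calR}$, and rearranging yields exactly the displayed bound, because $\abs{\calE} + \abs{\calY} + \abs{\calD} - \abs{\calX} - \abs{\calU} - \abs{\calR} = (\abs{\calD} - \abs{\calU}) + (\abs{\calY} - \abs{\calX}) + (\abs{\calE} - \abs{\calR})$. Taking the contrapositive delivers the corollary.

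There is essentially no real obstacle here beyond bookkeeping; the content is carried entirely by Theorem~\ref{theorem:dimensional-criterion} and the basis proposition. The one point worth getting right is that we only need the free one-sided estimate $\dim(\spann(\calG_\kappa(\calY,\calE)) \oplus \calN(\calY,\calE)) \geq \abs{\calU} + \abs{\calR}$ coming from the subspace inclusion, rather than the exact dimension of the sum; computing the latter would require determining the rank of the concatenated bases (e.g.\ by Gaussian elimination), which is precisely the computation the corollary is designed to bypass. Consequently the corollary is only a sufficient condition for failure of the e-family property, not a full characterization.
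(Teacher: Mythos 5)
Your proposal is correct and follows exactly the paper's own (more tersely stated) argument: the content is the trivial bound $\dim \spann(\calG_\kappa(\calY,\calE)) \leq \dim\left(\spann(\calG_\kappa(\calY,\calE)) \oplus \calN(\calY,\calE)\right)$, combined with the identity from Theorem~\ref{theorem:dimensional-criterion} and the value $\abs{\calU}+\abs{\calR}$ from the basis proposition, then a contrapositive. The bookkeeping rearrangement you give is exactly right, so there is nothing to add.
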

\begin{proof}
The claim directly follows from the inequality,
    \begin{equation*}
        \dim \spann\left(\calG_\kappa(\calY, \calE)\right) \leq \dim \left(\spann\left(\calG_\kappa(\calY, \calE)\right) \oplus \calN(\calY, \calE) \right).
    \end{equation*}
\end{proof}

\begin{example}[Example~{\ref{example:basis-affine-hull-log-lumpable-cone}} continued]
Recall the family
    \begin{equation*}
    \calW_{\kappa}(\calY, \calE) \sim \left(\begin{tblr}{c|[dashed]ccc}
  \oplus & \oplus & \plus & \zero  \\\hline[dashed]
  \oplus & \oplus & \plus & \plus \\
  \oplus & \zero & \oplus & \zero \\
  \oplus & \zero & \zero & \oplus \\
\end{tblr}\right).
\end{equation*}
A basis of functions $\calB_\kappa(\calY, \calE)$ for $\calG_{\kappa}(\calY, \calE)$ is given in Example~\ref{example:basis-affine-hull-log-lumpable-cone}.
The space $\calN(\calY, \calE)$ is spanned by 
    \begin{equation*}
    \left(\begin{tblr}{c|[dashed]ccc}
  1 & 1 & 1 & 0 \\\hline[dashed]
  1 & 1 & 1 & 1 \\
  1 & \zero & 1 & \zero \\
  1 & \zero & \zero & 1 \\
\end{tblr}\right), \left(\begin{tblr}{c|[dashed]ccc}
  \zero & 1 & \zero & \zero \\\hline[dashed]
  -1 & \zero & -1 & -1 \\
  \zero & \zero & \zero & \zero \\
  \zero & \zero & \zero & \zero \\
\end{tblr}\right), \left(\begin{tblr}{c|[dashed]ccc}
  \zero & \zero & 1 & \zero \\\hline[dashed]
  \zero & \zero & 1 & \zero \\
  -1 & \zero & \zero & \zero \\
  \zero & \zero & \zero & \zero \\
\end{tblr}\right), \left(\begin{tblr}{c|[dashed]ccc}
  \zero & \zero & \zero & \zero \\\hline[dashed]
  \zero & \zero & \zero & 1 \\
  \zero & \zero & \zero & \zero \\
  -1 & \zero & \zero & \zero \\
\end{tblr}\right). 
\end{equation*}
On one end, the manifold dimension of the lumpable family is given by
\begin{equation*}
    \dim \calW_{\kappa}(\calY, \calE) = \abs{\calE} + \abs{\calD} - \abs{\calX} - \sum_{(x,x') \in \calD} \abs{\calS_{x}} = 11 + 4 - 2 - 8 = 5.
\end{equation*}
On the other hand, concatenating the two bases, one can algorithmically verify that
\begin{equation*}
\begin{split}
    \dim \left(\spann\left(\calG_\kappa(\calY, \calE)\right) \oplus \calN(\calY, \calE) \right) &= 10.
\end{split}
\end{equation*}
Consequently,
\begin{equation*}
    \dim \left(\spann\left(\calG_\kappa(\calY, \calE)\right) \oplus \calN(\calY, \calE) \right) > \dim \calW_{\kappa}(\calY, \calE) + \dim \calN(\calY, \calE),
\end{equation*}
and from Theorem~\ref{theorem:dimensional-criterion}, $\calW_{\kappa}(\calY, \calE)$ does not form an e-family.
\end{example}

\begin{remark}
\label{remark:comparison-combinatorial-dimensional-criteria}
The astute reader may notice that our characterization of e-families of lumpable families based on a dimension argument is strictly stronger than our necessary and sufficient combinatorial conditions, and thus question the need for the latter. However, we argue that criteria based on purely combinatorial properties of the connection graph are valuable from at least two perspectives. First and foremost, from an algorithmic complexity standpoint (refer to Section~\ref{section:algorithmics}), combinatorial criteria can be verified much more efficiently. In fact, for small state spaces, these arguments can even be confirmed by the human eye or with pen and paper. Second, combinatorial arguments can already successfully explain the nature of ``most'' families, bypassing the need to invoke the more involved dimensional argument. A heuristic argument to support our claim consists in observing that over large state spaces---where the difference in algorithmic complexity between dimensional and combinatorial criteria is the most relevant---a large proportion of irreducible digraphs are in fact two-connected, and thus we expect most merging blocks to be redundant. As a result, we can propose a layered argument, where most cases can be treated combinatorially, while the remaining cases are treated with the dimension argument.
\end{remark}

\section{Monotonicity and stability}
\label{section:monotonicity}
For edge sets $\calE, \calE' \subset \calY^2$ such that $\calE \subset \calE'$, it is clear that when $(\calY, \calE)$ is strongly connected, so must be $(\calY, \calE')$, and the graph $(\calY, \calE)$ can be incrementally transformed into $(\calY, \calE')$ by adding individual edges, naturally leading to the construction of a monotonous sequence of consecutive elements $\calE = \calE_0 \subsetneq \calE_1 \subsetneq \dots \subsetneq \calE_{L} = \calE'$, where $L = \abs{\calE'} -  \abs{\calE}$.
When considering lumpability,
more care is required. Indeed, even if $\calW_\kappa(\calY, \calE)$ is non-vacuous, adding a single edge can easily disrupt lumpability, leading to $\calW_\kappa(\calY, \calE') = \emptyset$.
Only by adding edges in already existing blocks, or introducing new lumpable blocks can we guarantee the existence of lumpable chains under the new graph.
We first explain how to construct a monotonous sequence of essentially consecutive families of irreducible and lumpable stochastic matrices.

\begin{lemma}[Chaining]
\label{lemma:chaining-edge-sets}
Let $\calE, \calE' \in \calY^2$ be such that $\calE \subset \calE'$, and $\calW_{\kappa}(\calY, \calE)$ and $\calW_{\kappa}(\calY, \calE')$ are non-vacuous families of lumpable stochastic matrices. Then,
 there exists $L \in \bbN$ and a finite sequence of edge sets $\calE_0, \calE_1, \dots, \calE_L \subset \calY^2$, such that
 \begin{enumerate}
     \item[\enumi] $\calE_0 = \calE$, $\calE_L = \calE'$.
     \item[\enumii] The sequence $\{\calE_\ell \}_{\ell = 0, \dots, L }$ is strictly monotone increasing, that is, for any $\ell \in \set{0, \dots, L - 1}$, $\calE_\ell \subsetneq \calE_{\ell+1}$.
     \item[\enumiii] The sequence $\{\calE_\ell \}_{\ell = 0, \dots, L }$ is consecutive, in the sense where for any $\ell \in \set{0, \dots, L -1}$, it holds that for any $\calE'' \subset \calY^2$ such that $\calE_\ell \subset \calE'' \subset \calE_{\ell+1}$ either $\calE'' = \calE_\ell, \calE'' = \calE_{\ell+1}$ or $\calW_{\kappa}(\calY ,\calE'') = \emptyset$.
     \item[\enumiv] For any $\ell \in \set{0, \dots, L}$, $\calW_\kappa(\calY, \calE_\ell)$ is non-vacuous.
     \item[\enumv] For any $\ell \in \set{0, 1\dots, L - 1}$, $\calE_\ell$ and $\calE_{\ell+1}$ only differ either on an edge-link or a block-link:
     \begin{description}
         \item[Edge-link:] We say that $(y_\star, y_\star') \in \calE_{\ell+1}$ is an edge-link between $\calE_{\ell}$ and $\calE_{\ell+1}$ whenever             $$\kappa_2(\calE_{\ell+1}) = \kappa_2(\calE_{\ell}), \text{ and } \calE_{\ell+1} \setminus \calE_{\ell} = \set{(y_\star, y_\star')}.$$
 \item[Block-link:] We say that $(x_\star, x'_\star) \in \kappa_2(\calE_{\ell+1})$ is a block-link between $\calE_\ell$ and $\calE_{\ell+1}$ whenever
 $$\kappa_2(\calE_{\ell+1}) \neq \kappa_2(\calE_{\ell}), \calE_{\ell+1} \setminus \calE_{\ell} \subset \calS_{x_\star} \times \calS_{x_\star'} \text{, and } (x_\star, x'_\star) \text{ is non-merging}.$$
     \end{description}
 \end{enumerate}
\end{lemma}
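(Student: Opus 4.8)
The plan is to build the chain in two stages: first introduce, one block at a time, the blocks of $\kappa_2(\calE')$ that are missing from $\kappa_2(\calE)$, each via a minimal ``skeleton'' of edges; then adjoin the remaining edges of $\calE'$ one at a time. Strong connectivity of every $(\calY,\calE_\ell)$ will be automatic, since $\calE_\ell\supseteq\calE_0=\calE$ and $(\calY,\calE)$ is strongly connected, so the substantive points are (iii), (iv) and (v).

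I would first record two elementary facts. \emph{(a)} If $\calW_\kappa(\calY,\calF)\neq\emptyset$ and $(x,x')\in\kappa_2(\calF)$, then by the Kemeny--Snell condition \eqref{eq:kemeny-snell-condition} every $y\in\calS_x$ has at least one out-edge into $\calS_{x'}$: some row of $\calS_x$ does, and every row of $\calS_x$ sends the same (hence positive) mass to $\calS_{x'}$. \emph{(b)} Conversely, if $\calF\supseteq\calE$ is such that every $y\in\calS_x$ has an out-edge into $\calS_{x'}$ for each $(x,x')\in\kappa_2(\calF)$, then $\calW_\kappa(\calY,\calF)\neq\emptyset$: since $(\calX,\kappa_2(\calF))$ contains the strongly connected $(\calX,\calD)$ it is strongly connected, so pick $P^\flat\in\calW(\calX,\kappa_2(\calF))$ and let $P^\sharp(y,y')$ be $P^\flat(\kappa(y),\kappa(y'))$ divided by the number of out-edges of $y$ landing in $\calS_{\kappa(y')}$; then $P^\sharp$ is row-stochastic, $\kappa$-lumpable with $\kappa_\star P^\sharp=P^\flat$, and supported exactly on $\calF$, so $P^\sharp\in\calW_\kappa(\calY,\calF)$.

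With these at hand I would enumerate $\kappa_2(\calE')\setminus\kappa_2(\calE)=\{(x_1,x_1'),\dots,(x_m,x_m')\}$; by fact \emph{(a)} applied to $\calW_\kappa(\calY,\calE')$, for each $j$ I may choose $\sigma_j\subseteq\calE'\cap(\calS_{x_j}\times\calS_{x_j'})$ consisting of exactly one out-edge from each $y\in\calS_{x_j}$ (note $\sigma_j\cap\calE=\emptyset$). Set $\calE_j\eqdef\calE\cup\sigma_1\cup\dots\cup\sigma_j$ for $0\le j\le m$, so $\kappa_2(\calE_m)=\kappa_2(\calE')$, and let $\calE_{m+1}\subsetneq\dots\subsetneq\calE_L=\calE'$ exhaust $\calE'\setminus\calE_m$ one edge at a time. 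Then (i)--(ii) are clear; (v) holds because each step $j\mapsto j+1$ with $j<m$ adds $\sigma_{j+1}\subseteq\calS_{x_{j+1}}\times\calS_{x_{j+1}'}$ and makes $(x_{j+1},x_{j+1}')$ present and non-merging (one edge per row), i.e.\ is a block-link, whereas each later step adds one edge without changing $\kappa_2$ (its block already lies in $\kappa_2(\calE_m)=\kappa_2(\calE')$), i.e.\ is an edge-link. Property (iv) follows from fact \emph{(b)}: every $\calE_\ell$ satisfies its hypothesis, since blocks of $\kappa_2(\calE)$ inherit the out-edge property from $\calW_\kappa(\calY,\calE)\neq\emptyset$ via fact \emph{(a)}, and each new block $(x_i,x_i')$ already introduced has one out-edge per row through $\sigma_i$. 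For (iii), there is nothing between two consecutive edge-link sets, and if $\calE_j\subsetneq\calE''\subsetneq\calE_j\cup\sigma_{j+1}$ then $\calE''$ omits some edge of $\sigma_{j+1}$, so $(x_{j+1},x_{j+1}')\in\kappa_2(\calE'')$ while the corresponding $y\in\calS_{x_{j+1}}$ has no out-edge into $\calS_{x_{j+1}'}$, whence $\calW_\kappa(\calY,\calE'')=\emptyset$ by fact \emph{(a)}.

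The delicate point I expect to be the main obstacle is the consecutiveness property (iii) for multi-row new blocks: it is precisely what forces the skeletons $\sigma_j$ to be chosen minimal (one edge per row), since any oversized skeleton would leave intermediate edge sets whose lumpable family is still non-vacuous. Fact \emph{(b)} is routine but is the workhorse behind (iv), and it also makes the ordering of the edge-link stage irrelevant, since its hypothesis is preserved under adding edges once $\calE_m$ has been reached.
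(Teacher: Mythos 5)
Your proposal is correct and follows essentially the same two-stage construction as the paper's proof: first adjoin each missing block via a minimal one-edge-per-row skeleton (a block-link), then add the remaining edges one at a time (edge-links). Your facts \emph{(a)} and \emph{(b)} simply make explicit the non-vacuousness and consecutiveness verifications that the paper leaves as observations.
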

\begin{proof}
    Let us set $\calE_0 = \calE$ and let us denote $$\kappa_2(\calE') \setminus \kappa_2(\calE_0) = \set{(x_1,x'_1), \dots , (x_B, x'_B)},$$
    the indexed collection of $B \in \bbN$ non-zero blocks in lumped $\calE'$ that vanish in lumped $\calE$.
    For such a block $b \in [B]$, we denote $s_b = \abs{\calS_{x_b}}$, and we let
    \begin{equation*}
         (y_{b,1}, y'_{b,1}), \dots, (y_{b, s_b}, y'_{b, s_b}) \in \calS_{x_b} \times \calS_{x'_b} \cap \calE'
    \end{equation*}
    be a collection of $s_b$ pairs such that 
    $y_{b,1}, y_{b,2}, \dots, y_{b,s_b}$ are all distinct.
    Note that such a collection of $s_b$ pairs must necessarily exist from our assumption that $\calW_\kappa(\calY, \calE') \neq \emptyset$.
    For simplicity, we write
    \begin{equation*}
        \Delta \calE_b \eqdef \set{ (y_{b,1}, y'_{b,1}), \dots, (y_{b, s_b}, y'_{b, s_b}) },
    \end{equation*}
    and for $b \in [B]$, we then define inductively
    $$\calE_{b} = \calE_{b - 1} \cup \Delta \calE_b.$$
    Observe that by construction, $\calW_\kappa(\calY, \calE_b) \neq \emptyset$ for any $b = 0, \dots, B$, and that by exhaustion, we obtain $\kappa_2(\calE') = \kappa_2(\calE_{B})$.
    As a second step, we now denote
    \begin{equation*}
        \calE' \setminus \calE_B = \set{ (y_1, y'_1), \dots, (y_{E}, y'_{E}) }
    \end{equation*}
    the set of edges in $\calE'$ which are still missing in $\calE_B$.
    For $e \in [E]$, defining inductively
    \begin{equation*}
        \calE_{B + e} = \calE_{B + e - 1} \cup \set{ (y_e, y'_e) },
    \end{equation*}
    we obtain a full chain of non-vacuous lumpable families
    \begin{equation*}
        \calE_0 \subsetneq \dots \subsetneq \calE_{B} \subsetneq \calE_{B +1 } \subsetneq \dots \subsetneq \calE_{B + E},
    \end{equation*}
    which concludes the claim with $L = B + E$.
\end{proof}

\begin{remark}
    Links offer minimal updates in as much as it is not possible to insert additional (non-vacuous) elements in between links.
    Moreover, while chains are not unique, observe that they all share the same length
    \begin{equation*}
        L = \sum_{(x,x') \in \kappa_{2}(\calE)} \abs{ (\calE' \setminus \calE) \cap (\calS_{x} \times \calS_{x'}) } + \sum_{(x,x') \in \kappa_2(\calE') \setminus \kappa_2(\calE)} \abs{ \calE' \cap (\calS_{x} \times \calS_{x'})} - \abs{\calS_{x}} + 1.
    \end{equation*}
\end{remark}

We now state and show a monotonicity property of e-families.

\begin{theorem}[Monotonicity]
\label{theorem:monotonicity}
Let two edge sets $\calE \subset \calE' \subset \calY^2$
be such that both $\calW_\kappa(\calY, \calE)$ and $\calW_\kappa(\calY, \calE')$ are non-vacuous. If $\calW_\kappa(\calY, \calE')$ forms an e-family, then $\calW_\kappa(\calY, \calE)$ forms an e-family.
\end{theorem}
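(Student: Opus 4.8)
The plan is to verify the criterion of \citet[Corollary~3]{nagaoka2005exponential}, already invoked in the proof of Corollary~\ref{corollary:no-multi-row-merging-block-is-sufficient}: a non-empty subset of $\calW(\calY,\calE)$ forms an e-family precisely when it is closed under $\stoch$-normalized log-affine combinations. So I would fix arbitrary $P_0, P_1 \in \calW_\kappa(\calY,\calE)$ and $t \in \bbR$, observe that since both matrices have support exactly $\calE$ the Hadamard combination $\widetilde{Q}_t \eqdef P_0^{\hadamard(1-t)} \hadamard P_1^{\hadamard t}$ lies in $\calF^+(\calY,\calE)$ and is irreducible (as $(\calY,\calE)$ is strongly connected), and aim to show $\stoch(\widetilde{Q}_t) \in \calW_\kappa(\calY,\calE)$. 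This, together with convexity of $\calW_\kappa(\calY,\calE)$ (hence connectedness), would close the argument.

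The key device is to approach $P_0$ and $P_1$ from inside the larger (and, by assumption, exponential) family $\calW_\kappa(\calY,\calE')$ by perturbations that are \emph{identical} on the new edges $\calE'\setminus\calE$. Fixing any $P' \in \calW_\kappa(\calY,\calE')$ (non-vacuous by hypothesis), I would set $P_i^{(\eps)} \eqdef (1-\eps)P_i + \eps P'$ for $\eps \in (0,1)$ and $i \in \set{0,1}$. A short check shows $P_i^{(\eps)} \in \calW_\kappa(\calY,\calE')$: it is row-stochastic; it has support exactly $\calE'$ (the $\eps P'$ term supplies every edge of $\calE'$, while outside $\calE'$ both terms vanish); it is irreducible since $(\calY,\calE')$ is strongly connected; and it is $\kappa$-lumpable because the Kemeny--Snell equations \eqref{eq:kemeny-snell-condition} are linear and hold for $P_i$ (trivially on the blocks of $\kappa(\calE')\setminus\kappa(\calE)$, by hypothesis on the blocks of $\kappa(\calE)$) and for $P'$. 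Crucially, $P_0^{(\eps)}$ and $P_1^{(\eps)}$ agree on every edge of $\calE'\setminus\calE$, where both equal $\eps P'$.

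Next, since $\calW_\kappa(\calY,\calE')$ is an e-family, $Q_t^{(\eps)}\eqdef\stoch\bigl((P_0^{(\eps)})^{\hadamard(1-t)}\hadamard(P_1^{(\eps)})^{\hadamard t}\bigr)$ stays in $\calW_\kappa(\calY,\calE')$, and I would let $\eps\to 0^+$. On edges of $\calE$ the Hadamard factors converge to the positive entries of $\widetilde{Q}_t$; on edges of $\calE'\setminus\calE$ the two factors are powers of the \emph{same} positive scalar $\eps P'(y,y')$, so they recombine to exactly $\eps P'(y,y')\to 0$, regardless of $t$. Hence these combinations converge entrywise to $\widetilde{Q}_t$, which is supported on $\calE$ and thus irreducible. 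Since $\stoch$ is continuous on the set of irreducible non-negative matrices---the Perron--Frobenius root being a simple root of the characteristic polynomial, so that root and eigenvector vary continuously---one obtains $Q_t^{(\eps)}\to\stoch(\widetilde{Q}_t)$. Because lumpability is a closed (linear) condition and each $Q_t^{(\eps)}$ satisfies \eqref{eq:kemeny-snell-condition} on the blocks of $\kappa(\calE')\supseteq\kappa(\calE)$, the limit satisfies it on the blocks of $\kappa(\calE)$; it is row-stochastic as a limit of row-stochastic matrices; and $\stoch$ preserves support, so it has support exactly $\calE$ and is irreducible. Therefore $\stoch(\widetilde{Q}_t)\in\calW_\kappa(\calY,\calE)$, and the Nagaoka criterion concludes. (As an alternative bookkeeping device I could first use Lemma~\ref{lemma:chaining-edge-sets} to reduce to the case where $\calE$ and $\calE'$ differ by a single edge- or block-link and induct along the chain, but the argument above handles the general case directly.)

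I expect the main obstacle to be exactly the control of the limit. Perturbing $P_0$ and $P_1$ independently would leave entries of the form $\eta_0^{1-t}\eta_1^{t}$ on $\calE'\setminus\calE$, which for $t\notin[0,1]$ need not tend to $0$ (indeed can diverge), so the limit could leave $\calW_\kappa(\calY,\calE)$; forcing the two approximants to coincide on the new edges collapses that factor to the single vanishing quantity $\eps P'(y,y')$, and this is what makes the scheme work. The remaining points---that irreducibility and lumpability survive passage to the limit---are routine consequences of the continuity of $\stoch$ at irreducible non-negative matrices and the closedness of the Kemeny--Snell equations.
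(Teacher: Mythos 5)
Your proof is correct, but it runs in the opposite direction from the paper's and uses a genuinely different mechanism. The paper proves the contrapositive: assuming $\calW_\kappa(\calY,\calE)$ is not an e-family, it fixes a witness geodesic $\gamma^{(e)}_{P_0,P_1}(t_\star)$ that is quantitatively bounded away from $\calF_\kappa(\calY,\calE')$ by some $\zeta>0$, perturbs the endpoints into $\calW_\kappa(\calY,\calE')$ by placing small mass $\eta$ on the new edges, and then uses analytic perturbation theory for the Perron--Frobenius eigenpair to show the perturbed geodesic at $t_\star$ stays within $\zeta/2$ of the original, hence still leaves $\calF_\kappa(\calY,\calE')$; this is done separately for edge-links and block-links and assembled via the chaining Lemma~\ref{lemma:chaining-edge-sets}. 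You instead prove the direct implication by approximating a geodesic of the smaller family from inside the larger one and passing to the limit, and the one genuinely non-obvious ingredient---which you correctly identify as the crux---is taking the \emph{same} perturbation $\eps P'$ on every edge of $\calE'\setminus\calE$ for both endpoints, so that the Hadamard combination there collapses to $\eps P'(y,y')$ for every $t\in\bbR$ and vanishes in the limit (independent perturbations would give $\eta_0^{1-t}\eta_1^t$, which can diverge for $t\notin[0,1]$). The remaining steps (lumpability of the convex combinations, continuity of $\stoch$ at irreducible matrices because the Perron root is simple, closedness of the Kemeny--Snell equations and preservation of support under $\stoch$) all check out, and both arguments ultimately appeal to the same closure-under-e-geodesics criterion of Nagaoka. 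Your route is shorter and dispenses entirely with the chaining lemma, the edge-link/block-link case analysis, and the quantitative $\eta$--$\zeta$ bookkeeping; the paper's contrapositive formulation has the side benefit of producing an explicit geodesic witness in the larger family (which it reuses for the polynomial-time witness proposition in Section~\ref{section:algorithmics}), but as a proof of Theorem~\ref{theorem:monotonicity} itself your limiting argument is a clean and complete alternative.
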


\begin{proof}
To avoid trivialities, we assume that $\kappa$ is non-degenerate (refer to Remark~\ref{remark:degenerate-lumping}) and that $\abs{\calY} \geq 3$.
    Consider the max-norm on $\calF(\calY, \calY^2)$ defined by $\nrm{A} = \max_{y,y' \in \calY} \abs{A(y,y')}$.
    We prove the contrapositive of the claim.
    Let us suppose that $\calW_\kappa(\calY, \calE)$ does not form an e-family.
    In the case where $\calE' = \calY^2$, the claim
that $\calW_\kappa(\calY, \calE')$ does not form an e-family follows immediately from Corollary~\ref{corollary:complete-graph}. From now on, we can therefore assume that $\calE' \neq \calY^2$.
    From \citet[Corollary~3]{nagaoka2005exponential}, there exist $P_0, P_1 \in \calW_\kappa(\calY, \calE)$ and $t_\star \in \bbR_+$ such that $\gamma^{(e)}_{P_0 P_1}(t_\star) \not \in \calW_\kappa(\calY, \calE)$. 
    Treating $\calW_\kappa(\calY, \calE)$, $\calW(\calY, \calE)$ and $\calF_\kappa(\calY, \calE')$ as subsets of $\calF(\calY, \calY^2)$, and from our assumption that $\calW_\kappa(\calY, \calE') \neq \emptyset$, we can write\footnote{Recall that a function in $\calF_\kappa(\calY, \calE')$ can still vanish on $\calE'$.}
    \begin{equation*}
        \calW_\kappa(\calY, \calE) = \calW(\calY, \calE) \cap \calF_\kappa(\calY, \calE').
    \end{equation*}
    Since $\calW(\calY, \calE)$ is an e-family, it holds that $\gamma^{(e)}_{P_0, P_1} \subset \calW(\calY, \calE)$ and necessarily 
$\gamma^{(e)}_{P_0, P_1}(t_\star) \not \in \calF_\kappa(\calY, \calE').$
Since $\calE' \neq \calY^2$, it holds that $\dim \calF_\kappa(\calY, \calE') < \dim \calF(\calY, \calY^2)$.
Being a strict linear subspace of $\calF(\calY, \calY^2)$, $\calF_\kappa(\calY, \calE')$ has therefore no interior points and the boundary set is given by $\partial \calF_\kappa(\calY, \calE') = \calF_\kappa(\calY, \calE')$.
As a consequence, there must exist $\zeta \in \bbR_+$ such that the curve at parameter $t_\star$ satisfies 
    \begin{equation}
    \label{eq:parameter-time-outside}
        \inf_{F \in \calF_\kappa(\calY, \calE')} \nrm{F - \gamma^{(e)}_{P_0, P_1}(t_\star)} > \zeta.
    \end{equation}
    We first treat the two simple cases where $\calE$ and $\calE'$ are connected by either an edge-link or a block-link.
    From the chaining Lemma~\ref{lemma:chaining-edge-sets}, we will then deduce the general claim.

    \paragraph{Edge-link case:}
    Let us first assume that $\kappa_2(\calE) = \kappa_2(\calE')$ and that $\calE$ and $\calE'$ differ on a single edge
    $(y_\star, y_\star') \in \calE' \setminus \calE$, where $(y_\star, y_\star') \in \calS_{x_\star} \times \calS_{x'_\star}$. By construction, since $\calW_\kappa(\calY, \calE)$ is non-vacuous, there exists $\bar{y}'_\star \in \calS_{x'_\star}$ with $\bar{y}'_\star \neq y'_\star$ such that $(y_\star, \bar{y}'_\star) \in \calE$.
    Let $\eta \in (0, \bar{\eta}/2]$ with $\bar{\eta} \eqdef \min \{ P_0(y_\star, \bar{y}'_\star), P_1(y_\star, \bar{y}'_\star) \}$, and for $k \in \{ 0, 1\}$ and $(y,y') \in \calY^2$ set
    \begin{equation*}
    \begin{split}
        P'_k(y, y') &= \begin{cases} 0 &\text{ when } (y,y') \not \in \calE' \\
           \eta &\text{ when } (y,y') = (y_\star, y'_\star) \\
           P_k(y, y') - \eta &\text{ when } (y,y') = (y_\star, \bar{y}'_\star) \\
           P_k(y, y') &\text{ otherwise.}
        \end{cases}
        \end{split}
    \end{equation*}
    The resulting $P'_0$ an $P'_1$ are irreducible and $\kappa$-lumpable stochastic matrices, that is $P'_0, P'_1 \in \calW_{\kappa}(\calY, \calE')$.
    We proceed to inspect the curve $\gamma^{(e)}_{P'_0, P'_1} \colon \bbR \to \calW(\calY, \calE')$ and for simplicity, denote $P'_t \eqdef \gamma^{(e)}_{P'_0, P'_1}(t)$ the transition matrix such that for any $(y,y') \in \calY^2$,
    \begin{equation*}
        P'_t(y,y') = P'_0(y,y')^{1-t} P'_1(y,y')^{t} \frac{v_{t, \eta}(y')}{\rho_{t, \eta} v_{t, \eta}(y)},
    \end{equation*}
    where $\rho_{t, \eta}$ and $v_{t, \eta}$ are respectively the Perron--Frobenius root and associated eigenvector of $P_0'^{\hadamard (1 - t)} \hadamard P_1'^{\hadamard t}$.
    Observe that by unicity, for any $t \in \bbR$ it holds that $v_{t} = v_{t, 0}, \rho_{t} = \rho_{t, 0}$,
    and
    following analyticity of $\eta \mapsto v_{t, \eta}$ and $\eta \mapsto \rho_{t, \eta}$ on the closed interval $[0, \bar{\eta}/2]$ \citep{kato2013perturbation}, at time parameter $t = t_\star$ achieving \eqref{eq:parameter-time-outside}, it will be convenient to introduce
    \begin{equation*}
        \begin{split}
            \underline{\rho} = \min_{\eta \in [0, \bar{\eta} / 2]} \rho_{t_\star , \eta} > 0, \qquad \underline{v} = \min_{y \in \calY, \eta \in [0, \bar{\eta} / 2]} v_{t_\star , \eta}(y) > 0, \qquad \overline{v} = \max_{y \in \calY, \eta \in [0, \bar{\eta} / 2]} v_{t_\star , \eta}(y).
        \end{split}
    \end{equation*}
    Furthermore, 
    there exists $\bar{\eta}_{\zeta} \in \bbR_+$ such that for $\eta < \bar{\eta}_{\zeta}$
    \begin{equation*}
        \abs{\frac{v_{t_\star, \eta}(y')}{\rho_{t_\star, \eta} v_{t_\star, \eta}(y)} - \frac{v_{t_\star}(y')}{\rho_{t_\star} v_{t_\star}(y)}} \leq \zeta.
    \end{equation*}
    For $(y,y') \in \calE' \setminus \set{ (y_\star, y_\star'), (y_\star, \bar{y}'_\star) }$ and any $t \in \bbR$,
    \begin{equation*}
        P'_t(y,y') - P_t(y,y') = P_0(y,y')^{1-t} P_1(y,y')^{t} \left(\frac{v_{t, \eta}(y')}{\rho_{t, \eta} v_{t, \eta}(y)} - \frac{v_{t}(y')}{\rho_{t} v_{t}(y)}\right),
    \end{equation*}
thus,
\begin{equation*}
        \abs{P'_{t_\star}(y,y') - P_{t_\star}(y,y')} \leq \abs{\frac{v_{t_\star, \eta}(y')}{\rho_{t_\star, \eta} v_{t_\star, \eta}(y)} - \frac{v_{t_\star}(y')}{\rho_{t_\star} v_{t_\star}(y)}}.
    \end{equation*}
    Additionally,
    \begin{equation*}
    \begin{split}
    P'_t(y_\star, y'_\star) - P_t(y_\star, y'_\star) &= P'_t(y_\star, y'_\star) = \eta \frac{v_{t, \eta}(y'_\star)}{\rho_{t, \eta} v_{t, \eta}(y_\star)}, \\
\end{split}
\end{equation*}
leading to
\begin{equation*}
\begin{split}
    \abs{P'_{t_\star}(y_\star, y'_\star) - P_{t_\star}(y_\star, y'_\star)} &\leq \eta \frac{\overline{v}}{\underline{\rho} \underline{v}}.
    \end{split}
    \end{equation*}
    Finally, observing that
    \begin{equation*}
    \begin{split}
    &P'_t(y_\star, \bar{y}'_\star) - P_t(y_\star, \bar{y}'_\star) \\
 =& \left(P_0(y_\star, \bar{y}'_\star) - \eta\right)^{1 - t}\left(P_1(y_\star, \bar{y}'_\star) - \eta\right)^{t} \frac{v_{t, \eta}(\bar{y}'_\star)}{\rho_{t, \eta} v_{t, \eta}(y_\star)} - 
    \left(P_0(y_\star, \bar{y}'_\star) - \eta\right)^{1 - t}\left(P_1(y_\star, \bar{y}'_\star) - \eta\right)^{t}
    \frac{v_{t}(\bar{y}'_\star)}{\rho_{t} v_{t}(y_\star)} \\
&+ \left(P_0(y_\star, \bar{y}'_\star) - \eta\right)^{1 - t}\left(P_1(y_\star, \bar{y}'_\star) - \eta\right)^{t}
    \frac{v_{t}(\bar{y}'_\star)}{\rho_{t} v_{t}(y_\star)} - 
    P_0(y_\star, \bar{y}'_\star)^{1 - t} P_1(y_\star, \bar{y}'_\star)^{t}
    \frac{v_{t}(\bar{y}'_\star)}{\rho_{t} v_{t}(y_\star)} \\
    \end{split}
    \end{equation*}
we obtain that
        \begin{equation*}
    \begin{split}
    \abs{P'_{t_\star}(y_\star, \bar{y}'_\star) - P_{t_\star}(y_\star, \bar{y}'_\star)} \leq& \abs{\frac{v_{t_\star, \eta}(\bar{y}'_\star)}{\rho_{t_\star, \eta} v_{t_\star, \eta}(y_\star)} - 
    \frac{v_{t_\star}(\bar{y}'_\star)}{\rho_{t_\star} v_{t_\star}(y_\star)}} \\
&+ \left(1 - \left(1 - \frac{\eta}{P_0(y_\star, \bar{y}'_\star)}\right)^{1 - t_\star}\left(1 - \frac{\eta}{P_1(y_\star, \bar{y}'_\star)}\right)^{t_\star}
     \right) \frac{\overline{v}}{\underline{\rho} \underline{v}}, \\
    \end{split}
    \end{equation*}
    and for 
    \begin{equation*}
        \begin{split}
            \eta < \min \set{ P_0(y_\star, \bar{y}'_\star) \left(1 - \left(1 - \zeta \frac{\underline{\rho} \underline{v}}{4 \overline{v}} \right)^{\frac{1}{2(1 -t_\star)}}\right), P_1(y_\star, \bar{y}'_\star) \left(1 - \left(1 - \zeta \frac{\underline{\rho} \underline{v}}{4 \overline{v}} \right)^{\frac{1}{2 t_\star}}\right),  \bar{\eta}_{ \zeta/4} },
        \end{split}
    \end{equation*}
    the above is smaller than $\zeta /2$.
    As a result, if we also assume that $\eta$ such that $\eta < \bar{\eta}_{\zeta/2}$ and $\eta < \frac{\zeta \underline{\rho} \underline{v}}{2 \overline{v}}$, it holds that
    \begin{equation*}
        \nrm{P_{t_\star}' - P_{t_\star}} \leq \zeta / 2.
    \end{equation*}

For any $F \in \calF_\kappa(\calY, \calE')$, it then holds from the triangle inequality that
\begin{equation*}
    \nrm{F - P_{t_\star}} \leq \nrm{F - P'_{t_\star}} + \nrm{P'_{t_\star} - P_{t_\star}},
\end{equation*}
thus,
\begin{equation*}
        \nrm{F - P'_{t_\star}} \geq \nrm{F - P_{t_\star}} - \zeta/2 > \zeta/2,
\end{equation*}
and taking the infimum over $\calF_\kappa(\calY, \calE')$ leads to $\gamma^{(e)}_{P'_0, P'_1}(t_\star) \not \in \calF_\kappa(\calY, \calE')$, hence $\gamma^{(e)}_{P'_0, P'_1}(t_\star) \not \in \calW_\kappa(\calY, \calE')$.
Invoking \citet[Corollary~3]{nagaoka2005exponential}, we conclude that $\calW_\kappa(\calY, \calE')$ does not form an e-family.

\paragraph{Block-link case:}

Let us now assume that $\calE'$ and $\calE$ differ by a block-link, that is $\kappa_2(\calE') \setminus \kappa_2(\calE) = \{ (x_\star, x'_\star) \}$,  a single non-merging block, and $\calE' \setminus \calE \subset \calS_{x_\star} \times \calS_{x'_\star}$.
We denote $s = \abs{\calS_{x_\star}}$ and we enumerate $(y_1, y_1'), \dots, (y_s, y_{s}') \in \calS_{x_\star} \times \calS_{x'_\star}$ the elements which are also in $\calE' \setminus \calE$.
Since elements of $\calW_\kappa(\calY, \calE)$ are stochastic matrices, there must exist a block $(x_\star, \bar{x}_\star') \in \kappa_2(\calE), $ with $ \bar{x}_\star' \neq x_\star'$.
We let $\eta \in (0, \bar{\eta}/2]$ where $$\bar{\eta} \eqdef \min \{  P_k(y,y') \colon k \in \{ 0, 1 \}, (y, y') \in  \calS_{x_\star} \times \calS_{\bar{x}'_\star} \}.$$
We further let
$\bar{y}'_1, \dots, \bar{y}'_s \in \calS_{\bar{x}'_\star}$ be such that
$(y_1, \bar{y}_1'), \dots, (y_s, \bar{y}'_{s}) \in \calS_{x_\star} \times \calS_{\bar{x}'_\star}$.
For $k \in \set{0,1}$ and $(y,y') \in \calY^2$, we construct
\begin{equation*}
    P'_{k}(y,y') = \begin{cases} 0 &\text{ when } (y,y') \not \in \calE' \\
        \eta &\text{ when } (y, y' ) \in \calS_{x_\star} \times \calS_{x'_\star}\\
        P_{k}(y,y') - \eta &\text{ when there exists $i \in [s]$ such that $(y,y') = (y_i, \bar{y}_i')$} \\ 
        P_{k}(y,y') &\text{ otherwise.}\\
    \end{cases}
\end{equation*}
Observe that $P'_0$ and $P'_1$ are both irreducible and $\kappa$-lumpable stochastic matrices.
Similar to the edge-link case, we inspect the curve $\gamma^{(e)}_{P_0', P_1'} \colon \bbR \to \calW(\calY, \calE')$, and we denote $P'_t = \gamma^{(e)}_{P_0', P_1'}(t)$ the point on the curve at time parameter $t$.
Using a similar argument as for the edge-link case, we show that the two curves $\gamma^{(e)}_{P_0', P_1'}$ and $\gamma^{(e)}_{P_0, P_1}$ can be made arbitrarily close point-wise, and in particular for $t = t_\star$. That is, there exists $\bar{\eta}_{\zeta}$ such that for $\eta < \bar{\eta}_{ \zeta}$, it holds that $\nrm{P'_{t_\star} - P_{t_\star}} \leq \zeta / 2$.
An application of the triangle inequality and \citet[Corollary~3]{nagaoka2005exponential} conclude the claim.

\paragraph{Chaining and conclusion.}

As a conclusion of the above, if $\calE$ and $\calE'$ only differ by an edge-link or a block-link, the claim holds.
The general case can be obtained by inductively applying the above result to the chain of consecutive edge sets $\{\calE_\ell\}_{\ell = 0 \dots, L}$ with $\calE_0 = \calE, \calE_L = \calE', $ obtained from Lemma~\ref{lemma:chaining-edge-sets}. 

\end{proof}

\begin{definition}[Maximal e-families and minimal non e-families]
We consider the partial order on non-vacuous lumpable and irreducible families induced from their edge sets ordered by inclusion. We call $\calW_\kappa(\calY, \calE)$ a maximal e-family, if $\calW_\kappa(\calY, \calE)$ forms an e-family, and any non-vacuous lumpable family $\calW_{\kappa}(\calY, \calE')$ such that $\calE \subsetneq \calE'$ does not form an e-family.
    Similarly, we call $\calW_\kappa(\calY, \calE')$ a minimal non e-family, if $\calW_\kappa(\calY, \calE')$ does not form an e-family, and any non-vacuous lumpable family $\calW_{\kappa}(\calY, \calE)$ such that $\calE \subsetneq \calE'$ forms an e-family.
\end{definition}

\begin{remark}[Well-definedness]
    The family $\calW_\kappa(\calY, \calY^2)$ defined over the complete graph being lumpable for any $\kappa$, irreducible, and not forming an e-family (in non-trivial settings) it always holds that for any $\calE \subsetneq \calY^2$ such that $\calW_{\kappa}(\calY, \calE)$ forms an e-family, that there exists
    $\calE' \subset \calY^2$ with $\calE \subset \calE'$ such that $\calW_{\kappa}(\calY, \calE')$ does not form an e-family.
    Note however that for any $\calE' \subset \calE$ such that $\calW_\kappa(\calY, \calE')$ is non-vacuous and does not form an e-family, it is not always possible to extract $\calE \subset \calE'$ such that $\calW_{\kappa}(\calY, \calE)$ forms a non-vacuous e-family.
    The example below demonstrates this fact.
\begin{equation*}
    \calW_{\kappa}(\calY, \calE) \sim \left(\begin{tblr}{cc|[dashed]cc}
  \zero & \zero & \zero & \plus  \\
  \zero & \zero & \plus & \zero \\\hline[dashed]
  \zero & \plus & \zero & \plus \\
  \plus & \plus & \zero & \plus \\
\end{tblr}\right)
\end{equation*}
    Observe that if we remove any subset of edges from the family above, it would become either reducible, or non-lumpable. The minimality statement thus only holds vacuously in this case.
\end{remark}

It is instructive to observe that adding or removing diagonal blocks from the diagonal does not alter the e-family property.

\begin{proposition}[Stability through diagonal modification]
\label{proposition:stability-diagonal-modification}
    Let $\calE, \calE'$ where $\calE \subset \calE' \subset \calY^2$ be such that $\calW_{\kappa}(\calY, \calE') \neq \emptyset$, and suppose additionally that
        $$\calE' \setminus \calE \subset \set{(y,y) \colon y \in  \calY, \kappa(y) \not \in \calD = \kappa_2(\calE)}.$$
    Then it holds that $\calW_\kappa(\calY, \calE')$ forms an e-family if and only if $\calW_\kappa(\calY, \calE)$ forms an e-family.
\end{proposition}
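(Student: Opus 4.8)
The plan is to establish the biconditional by treating its two implications separately. The implication ``$\calW_\kappa(\calY,\calE')$ forms an e-family $\implies$ $\calW_\kappa(\calY,\calE)$ forms an e-family'' is immediate from the monotonicity Theorem~\ref{theorem:monotonicity}, once we check that $\calW_\kappa(\calY,\calE)$ is non-vacuous. This follows from a renormalisation argument: given $\bar{Q}\in\calW_\kappa(\calY,\calE')$, for each newly created block $(x_0,x_0)\in\kappa_2(\calE')\setminus\calD$ the only edges inside $\calS_{x_0}\times\calS_{x_0}$ are self-loops, so the Kemeny--Snell condition~\eqref{eq:kemeny-snell-condition} forces $\bar{Q}(y,y)$ to take a common value $d_{x_0}\in(0,1)$ over $y\in\calS_{x_0}$ (strictness because strong connectivity of $(\calY,\calE)$ gives each row an $\calE$-edge of positive mass); dividing the $\calE$-entries of row $y$ by $1-d_{\kappa(y)}$ (with $d_x\eqdef 0$ when no new block sits at $x$) produces an element of $\calW_\kappa(\calY,\calE)$. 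More generally this encodes every $P\in\calW_\kappa(\calY,\calE')$ by a pair $(Q,d)$ with $Q\in\calW_\kappa(\calY,\calE)$ and $d$ ranging over the new-block masses, every new block being non-merging.

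For the converse implication I would assume $\calW_\kappa(\calY,\calE)$ is an e-family and verify the e-geodesic closure criterion of \citet[Corollary~3]{nagaoka2005exponential} for $\calW_\kappa(\calY,\calE')$: that for every $P_0,P_1$ in the family and every $t\in\bbR$, $\gamma^{(e)}_{P_0,P_1}(t)$ lies in the family. Fix $P_0,P_1\in\calW_\kappa(\calY,\calE')$ with decompositions $(Q_0,d_0),(Q_1,d_1)$, and set $\widetilde{P}_t=P_0^{\hadamard(1-t)}\hadamard P_1^{\hadamard t}$, $\widetilde{Q}_t=Q_0^{\hadamard(1-t)}\hadamard Q_1^{\hadamard t}$. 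Then $\widetilde{P}_t(y,y')=c_{\kappa(y)}(t)\,\widetilde{Q}_t(y,y')$ on $\calE$, where $c_x(t)=(1-d_{0,x})^{1-t}(1-d_{1,x})^{t}$, while on each new self-loop $\widetilde{P}_t(y,y)=d_{0,\kappa(y)}^{1-t}d_{1,\kappa(y)}^{t}$ depends only on $\kappa(y)$. Since $\calW_\kappa(\calY,\calE)$ is an e-family, $\stoch(\widetilde{Q}_t)\in\calW_\kappa(\calY,\calE)$, and by Proposition~\ref{proposition:stochastic-rescaling-and-lumping-commute-and-partition-constant-eigenvector}--\enumii the right PF eigenvector $w_t$ of $\widetilde{Q}_t$ is constant on the partition, $w_t=w_t^\flat\circ\kappa$; equivalently $\widetilde{Q}_t(y,\calS_{x'})=\mu_t\,w_t^\flat(\kappa(y))\,(\kappa_\star\stoch(\widetilde{Q}_t))(\kappa(y),x')/w_t^\flat(x')$, with $\mu_t$ the PF root of $\widetilde{Q}_t$.

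The crux --- and the step I expect to be the main obstacle --- is that $\widetilde{P}_t$ itself need not be $\kappa$-lumpable, so Proposition~\ref{proposition:stochastic-rescaling-and-lumping-commute-and-partition-constant-eigenvector}--\enumi does not apply, and one must identify the right PF eigenpair of $\widetilde{P}_t$ by hand and show it is constant on the partition. I would feed the trial vector $v(y)=w_t^\flat(\kappa(y))\,\beta_{\kappa(y)}$ into $\widetilde{P}_t v$: using the displayed identity, the inner sum over each $\calS_{x'}$ telescopes, and the equation $\widetilde{P}_t v=\nu v$ collapses to the Perron--Frobenius equation on $\calX$ for the non-negative matrix $B_t$ with $B_t(x,x')=\mu_t\,c_x(t)\,(\kappa_\star\stoch(\widetilde{Q}_t))(x,x')$ on $\calD$ and an added entry $d_{0,x_0}^{1-t}d_{1,x_0}^{t}$ at each new diagonal position $(x_0,x_0)$. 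The support of $B_t$ is exactly $\kappa_2(\calE')$, hence $B_t$ is irreducible, so Perron--Frobenius supplies a positive block-constant $\beta$ and root $\nu_t$; by uniqueness of the PF data this is the PF eigenpair of $\widetilde{P}_t$, and it is constant on the partition. Reading off $\stoch(\widetilde{P}_t)(y,\calS_{x'})$ for each $(x,x')\in\kappa_2(\calE')$ then shows it depends only on $\kappa(y)$ --- it equals $\tfrac{\mu_t c_x(t)\beta_{x'}}{\nu_t\beta_x}(\kappa_\star\stoch(\widetilde{Q}_t))(x,x')$ on old blocks and $d_{0,x_0}^{1-t}d_{1,x_0}^{t}/\nu_t$ on each new one --- so $\gamma^{(e)}_{P_0,P_1}(t)=\stoch(\widetilde{P}_t)\in\calW_\kappa(\calY,\calE')$. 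Since $P_0,P_1,t$ were arbitrary, \citet[Corollary~3]{nagaoka2005exponential} concludes that $\calW_\kappa(\calY,\calE')$ is an e-family.

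As an alternative I would record the shorter route through the dimensional criterion of Theorem~\ref{theorem:dimensional-criterion}: each new self-loop block $(x_0,x_0)$ is non-merging with its only edges the self-loops, so the explicit basis construction shows it adds exactly one generator $\sum_{y\in\calS_{x_0}}\pred{(y,y)}$ to $\spann(\calG_\kappa(\calY,\calE))$ while leaving $\dim\calN(\calY,\calE)$ and $\dim(\spann(\calG_\kappa(\calY,\calE))\cap\calN(\calY,\calE))$ unchanged; hence both sides of the criterion equality increase by the number of new blocks, and one holds for $\calE$ if and only if it holds for $\calE'$. This route is cheaper but leans on the basis computation, whereas the e-geodesic argument above is self-contained.
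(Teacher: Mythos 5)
Your overall architecture matches the paper's: one direction by monotonicity (Theorem~\ref{theorem:monotonicity}), the other by checking e-geodesic closure after decomposing each $P\in\calW_\kappa(\calY,\calE')$ into a renormalized $Q\in\calW_\kappa(\calY,\calE)$ plus block-constant diagonal masses $d$ --- this is exactly the paper's construction of $P_i$ from $P_i'$, and your observation that the new diagonal entries of $\widetilde{P}_t$ are constant on each $\calS_{x_0}$ (forced by Kemeny--Snell, since the self-loops are the only edges in the new block) is the key structural fact. However, your crux step contains a genuine error. You invoke Proposition~\ref{proposition:stochastic-rescaling-and-lumping-commute-and-partition-constant-eigenvector}--\enumii to conclude that the right PF eigenvector $w_t$ of $\widetilde{Q}_t=Q_0^{\hadamard(1-t)}\hadamard Q_1^{\hadamard t}$ is constant on the partition. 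That proposition applies only to elements of $\calF_\kappa^+(\calY,\calE)$, and $\widetilde{Q}_t$ is generally \emph{not} lumpable even when $\calW_\kappa(\calY,\calE)$ is an e-family: the lazy-cycle families of Proposition~\ref{proposition:lazy-cycle-criterion} are e-families with multi-row merging blocks, so by Theorem~\ref{theorem:characterization-log-linearity} one can choose $Q_0,Q_1,t$ with $\widetilde{Q}_t\notin\calF_\kappa^+(\calY,\calE)$. Worse, in that situation the PF eigenvector of $\widetilde{Q}_t$ is \emph{provably not} block-constant: if it were, then $\widetilde{Q}_t=\rho\,\diag(w_t)\,\stoch(\widetilde{Q}_t)\,\diag(w_t)^{-1}$ with $w_t$ block-constant would be lumpable by Proposition~\ref{proposition:eigenvector-rescaling-constant-on-each-lumped-point}, a contradiction. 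So the identity you derive for $\widetilde{Q}_t(y,\calS_{x'})$, the trial vector $v(y)=w_t^\flat(\kappa(y))\beta_{\kappa(y)}$, and the reduction to a PF problem on $\calX$ all rest on a false premise; the PF eigenvector of $\widetilde{P}_t$ is not block-constant in general.

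The repair is to drop the explicit eigenvector computation and argue through Corollary~\ref{corollary:geometric-intersection-interpretation}, as the paper does: since $\calW_\kappa(\calY,\calE)$ is an e-family, $\stoch(\widetilde{Q}_t)\in\calW_\kappa(\calY,\calE)$, hence there exists \emph{some} positive $v$ (not the PF eigenvector) with $\diag(v)\widetilde{Q}_t\diag(v)^{-1}\in\calF_\kappa^+(\calY,\calE)$. Because conjugation by $\diag(v)$ fixes diagonal entries, the new self-loop entries of $\diag(v)\widetilde{P}_t\diag(v)^{-1}$ remain block-constant, and the $\calE$-entries are the block-row-constant multiples $c_{\kappa(y)}(t)$ of those of $\diag(v)\widetilde{Q}_t\diag(v)^{-1}$; the Kemeny--Snell condition therefore holds for $\diag(v)\widetilde{P}_t\diag(v)^{-1}$ on every block of $\calE'$, and Corollary~\ref{corollary:geometric-intersection-interpretation} gives $\stoch(\widetilde{P}_t)\in\calW_\kappa(\calY,\calE')$. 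Your alternative route via the dimensional criterion is plausible (both sides of the equality in Theorem~\ref{theorem:dimensional-criterion} do increase by one per new block), but the claim that $\dim\bigl(\spann(\calG_\kappa)\oplus\calN\bigr)$ increases by exactly one needs an argument about how the new generator interacts with $\calN(\calY,\calE')$, which you only assert.
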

\begin{proof}
Let us first assume that $\calE'$ and $\calE$ differ on a single diagonal block $(x_0, x_0) \in \calD'$ and suppose that $\calW_\kappa(\calY, \calE)$ forms an e-family.
We let $P_0', P_1'$ be an arbitrary pair in $\calW_\kappa(\calY, \calE')$ and for $t \in \bbR_+$, we define $\widetilde{P'_t} = {P'_0}^{\hadamard (1 - t)} \hadamard {P'_1}^{\hadamard t}$ to be their log-affine combination.
For $i \in \{0,1\}$, we additionally define $P_i \in \calW_{\kappa}(\calY, \calE)$ be such that
\begin{equation*}
    P_i(y,y') = \begin{cases}
                    P'_i(y,y') &\text{ when }  \kappa(y) \neq x_0 \\
                    0 &\text{ when } (\kappa(y), \kappa(y')) = (x_0, x_0) \\
                    \frac{P'_i(y,y')}{\sum_{x' \neq x_0} P'^{\flat}_i(x_0, x') }  &\text{ otherwise}.
                \end{cases}
\end{equation*}
Since $\calW_\kappa(\calY, \calE)$ forms an e-family, it must be that $\stoch(\widetilde{P}_t) \in \calW_\kappa(\calY, \calE)$ where
$\widetilde{P}_t= P_0^{\hadamard (1-t)} \hadamard P_1^{\hadamard t}$.
Thus, from Corollary~\ref{corollary:geometric-intersection-interpretation}, there exists $v \in \bbR_+$ such that $\diag(v) \widetilde{P}_t \diag(v)^{-1} \in \calF_{\kappa}(\calY, \calE)$.
Observe now that the same vector $v$ also satisfies $\diag(v) \widetilde{P}'_t \diag(v)^{-1} \in \calF_{\kappa}(\calY, \calE')$, thus $\stoch(\widetilde{P}'_t) \in \calW_\kappa(\calY, \calE')$, which forms an e-family. The case where $\calE$ and $\calE'$ differ by more than one diagonal block can be retrieved using chaining (refer to Lemma~\ref{lemma:chaining-edge-sets}). The converse statement follows more generally by monotonicity (refer to Theorem~\ref{theorem:monotonicity}).
\end{proof}

\section{Full classification for small state spaces}
\label{section:classification}
In this section, we provide an exhaustive classification for state spaces of size two and three.

\begin{remark}[Degenerate lumping function]
\label{remark:degenerate-lumping}
    If $\kappa \colon \calY \to \calX$ is such that $\abs{\calX} \in \set{ 
1, \abs{\calY} }$, then $\calW_{\kappa}(\calY, \calE)$ forms an e-family.
\end{remark}

It immediately follows that for $\abs{\calY} = 2$, every lumpable family forms an e-family.
We proceed to enumerate e-families for the three-state space.

\begin{theorem}[Three-state space classification]
\label{theorem:enumeration-three-state-space}

When $\abs{\calY} = 3$,
 and assuming that $\calW_{\kappa}(\calY, \calE) \neq \emptyset$, the following two statements are equivalent.
 \begin{enumerate}
     \item[\enumi] $\calW_{\kappa}(\calY, \calE)$ forms an e-family.
     \item[\enumii] Either $\kappa$ is degenerate or $\calW_{\kappa}(\calY, \calE)$ is equivalent to one of the 12 below-listed families.
 \end{enumerate}
 \begin{center}
\begin{tblr}{cccc}
$\left(\begin{tblr}{c|[dashed]cc}
  \zero & \plus & \plus\\\hline[dashed]
  \plus & \zero & \zero\\
  \plus & \zero & \zero\\
\end{tblr}\right)$
& 
$\left(\begin{tblr}{c|[dashed]cc}
  \plus & \plus & \plus\\\hline[dashed]
  \plus & \zero & \zero\\
  \plus & \zero & \zero\\
\end{tblr}\right)$ 
& 
$\left(\begin{tblr}{c|[dashed]cc}
  \zero & \zero & \plus\\\hline[dashed]
  \plus & \zero & \plus\\
  \plus & \plus & \zero\\
\end{tblr}\right)$
& 
 $\left(\begin{tblr}{c|[dashed]cc}
  \zero & \zero & \plus\\\hline[dashed]
  \plus & \plus & \zero\\
  \plus & \plus & \zero\\
\end{tblr}\right)$
\\
$\left(\begin{tblr}{c|[dashed]cc}
  \plus & \zero & \plus\\\hline[dashed]
  \plus & \plus & \zero\\
  \plus & \plus & \zero\\
\end{tblr}\right)$
&
$\left(\begin{tblr}{c|[dashed]cc}
  \zero & \plus & \plus\\\hline[dashed]
  \plus & \plus & \zero\\
  \plus & \plus & \zero\\
\end{tblr}\right)$
& 
$\left(\begin{tblr}{c|[dashed]cc}
  \plus & \zero & \plus\\\hline[dashed]
  \plus & \zero & \plus\\
  \plus & \plus & \zero\\
\end{tblr}\right)$
& 
$\left(\begin{tblr}{c|[dashed]cc}
  \plus & \plus & \plus\\\hline[dashed]
  \plus & \plus & \zero\\
  \plus & \plus & \zero\\
\end{tblr}\right)$
\\ 
$\left(\begin{tblr}{c|[dashed]cc}
  \zero & \plus & \plus\\\hline[dashed]
  \plus & \zero & \plus\\
  \plus & \plus & \zero\\
\end{tblr}\right)$
& $\left(\begin{tblr}{c|[dashed]cc}
  \plus & \plus & \plus\\\hline[dashed]
  \plus & \zero & \plus\\
  \plus & \plus & \zero\\
\end{tblr}\right)$
&
$\left(\begin{tblr}{c|[dashed]cc}
  \zero & \plus & \plus\\\hline[dashed]
  \plus & \plus & \zero\\
  \plus & \zero & \plus\\
\end{tblr}\right)$
& 
$\left(\begin{tblr}{c|[dashed]cc}
  \plus & \plus & \plus\\\hline[dashed]
  \plus & \plus & \zero\\
  \plus & \zero & \plus\\
\end{tblr}\right)$
\end{tblr}
\end{center}

\end{theorem}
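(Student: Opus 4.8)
The plan is to strip off the degenerate lumping via Remark~\ref{remark:degenerate-lumping}, reduce to $\abs{\calX}=2$, and then split on whether $(\calY,\calE)$ has a multi-row merging block, invoking Corollary~\ref{corollary:no-multi-row-merging-block-is-sufficient} on one side and the dimensional criterion together with monotonicity on the other. If $\kappa$ is degenerate, i.e.\ $\abs{\calX}\in\set{1,\abs{\calY}}=\set{1,3}$, then $\calW_\kappa(\calY,\calE)$ is an e-family by Remark~\ref{remark:degenerate-lumping}; so assume $\abs{\calX}=2$. Up to equivalence we may take $\calX=\set{a,b}$ with $\calS_a=\set{1,2}$ and $\calS_b=\set{3}$, the only residual symmetry being the transposition $1\leftrightarrow 2$. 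Two facts are then routine. First, the Kemeny--Snell condition on the block $(a,b)$ is $P(1,3)=P(2,3)$; combined with irreducibility this forces $\set{(1,3),(2,3)}\subset\calE$, the $(a,a)$-block condition $P(1,\calS_a)=P(2,\calS_a)$ becomes automatic (both sides equal $1-P(1,3)$), and $\calW_\kappa(\calY,\calE)\neq\emptyset$ holds precisely when, in addition, $(\calY,\calE)$ is strongly connected and rows $1$ and $2$ agree on whether they carry an out-edge into $\calS_a$ (otherwise the common value $P(1,\calS_a)=P(2,\calS_a)$ cannot be positive for one row and realized with the prescribed support for the other). Second, the only block whose source has cardinality $\geq 2$ is $(a,a)$, and $(a,b)$ cannot be merging since $\abs{\calS_b}=1$; hence $(\calY,\calE)$ has a multi-row merging block if and only if some $y\in\set{1,2}$ has both $(y,1),(y,2)\in\calE$.

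For the sufficiency direction, if $(\calY,\calE)$ has no multi-row merging block then Corollary~\ref{corollary:no-multi-row-merging-block-is-sufficient} gives that $\calW_\kappa(\calY,\calE)$ is an e-family. It then remains to enumerate, up to the transposition $1\leftrightarrow 2$, all edge sets $\calE\supset\set{(1,3),(2,3)}$ that are strongly connected, non-vacuous (rows $1,2$ agreeing on their $\calS_a$-emptiness), and in which neither row $1$ nor row $2$ carries both of its $\calS_a$-out-edges. This is a short finite check: there are only a handful of choices for the within-$\calS_a$ edges of rows $1$ and $2$, for the edges out of $3$, and for the loop $(3,3)$ (which never affects connectivity, non-vacuity, or the multi-row-merging status, and whose presence is stable by Proposition~\ref{proposition:stability-diagonal-modification}). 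Discarding the non-strongly-connected and vacuous combinations and collapsing under the transposition yields exactly the $12$ listed patterns.

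For the converse, suppose $(\calY,\calE)$ has a multi-row merging block; by the transposition we may assume $(1,1),(1,2)\in\calE$. Non-vacuity then forces $(2,1)\in\calE$ or $(2,2)\in\calE$, and strong connectivity forces $(3,1)\in\calE$ or $(3,2)\in\calE$; a short case analysis of which of these choices keep $(\calY,\calE)$ strongly connected shows that $\calE$ contains, up to the transposition, one of three explicit $6$-edge sets: $\calE_\star=\set{(1,1),(1,2),(1,3),(2,1),(2,3),(3,1)}$, the set obtained from it by replacing $(3,1)$ with $(3,2)$, and the set obtained by replacing $(2,1)$ with $(2,2)$. For each such $\calE_\star$ the family $\calW_\kappa(\calY,\calE_\star)$ is non-vacuous and has $\calD=\set{(a,a),(a,b),(b,a)}$, $\calU=\calD$, $\calR=\set{(1,1),(1,2)}$, so the simplified test of Corollary~\ref{corollary:dimensional-criterion-simplified} returns an \emph{equality} ($\sum_{(x,x')\in\calD}\abs{\calS_x}=5=0+1+4$) and is inconclusive; one therefore applies the full criterion, Theorem~\ref{theorem:dimensional-criterion}. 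There $\spann(\calG_\kappa(\calY,\calE_\star))$ works out to the hyperplane $\set{F\in\bbR^{\calE_\star}\colon F(1,3)=F(2,3)}$, whereas the basis element $N_1$ of $\calN(\calY,\calE_\star)$ given by $N_1(y,y')=\pred{y'=1}-\pred{y=1}$ has $N_1(1,3)=-1\neq 0=N_1(2,3)$; hence $\dim\big(\spann(\calG_\kappa(\calY,\calE_\star))\oplus\calN(\calY,\calE_\star)\big)=\abs{\calE_\star}=6$, while Theorem~\ref{theorem:dimensional-criterion} requires the value $\abs{\calE_\star}+\abs{\calY}+\abs{\calD}-\abs{\calX}-\sum_{(x,x')\in\calD}\abs{\calS_x}=5$. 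So none of the three $\calW_\kappa(\calY,\calE_\star)$ is an e-family, and by the contrapositive of monotonicity (Theorem~\ref{theorem:monotonicity}) neither is any non-vacuous $\calW_\kappa(\calY,\calE)$ possessing a multi-row merging block. Combining this with the sufficiency direction and the reduction gives the stated equivalence.

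The conceptual content is entirely carried by the results already established (Corollaries~\ref{corollary:no-multi-row-merging-block-is-sufficient} and~\ref{corollary:dimensional-criterion-simplified} and Theorems~\ref{theorem:dimensional-criterion} and~\ref{theorem:monotonicity}); the genuine effort, and the main obstacle, is the finite-but-fiddly enumeration that must be verified to collapse the sufficiency side to exactly $12$ equivalence classes, together with the mild annoyance that for each of the three minimal obstructions the cheap combinatorial test yields only equality, so that the full dimensional computation has to be carried out by hand.
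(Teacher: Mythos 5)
Your proof is correct and follows the paper's overall strategy --- reduce to $\abs{\calX}=2$ via Remark~\ref{remark:degenerate-lumping}, certify the families without multi-row merging blocks via Corollary~\ref{corollary:no-multi-row-merging-block-is-sufficient}, and rule out the rest via the dimensional criterion --- but it reorganizes the necessity direction in a genuinely different and more economical way. The paper enumerates all $26$ equivalence classes of non-vacuous lumpable families and applies Theorem~\ref{theorem:dimensional-criterion} separately to each of the $14$ that are not e-families; you instead observe that (after the forced edges $(1,3),(2,3)$ and the support constraints coming from the Kemeny--Snell condition and strong connectivity) every family with a multi-row merging block must contain one of three explicit $6$-edge minimal obstructions, carry out the dimensional computation only for those three, and propagate the negative conclusion upward by the contrapositive of monotonicity (Theorem~\ref{theorem:monotonicity}). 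I checked the three computations: in each case $\calD=\set{(a,a),(a,b),(b,a)}$, $\calU=\calD$, $\calR$ consists of the two merging edges, $\spann(\calG_\kappa)$ is the hyperplane $\set{F\colon F(1,3)=F(2,3)}$, and $N_1$ lies outside it, so the sum has dimension $6$ against the required value $5$; the case analysis showing that any strongly connected, non-vacuous $\calE$ with a merging row must contain one of the three sets is also exhaustive. What your route buys is a reduction from $14$ dimensional checks to $3$, at the cost of invoking the (nontrivial) monotonicity theorem; what the paper's route buys is independence from Section~\ref{section:monotonicity}. Both proofs leave the final finite enumeration of the $12$ non-merging classes at the level of a routine check, so you are not asserting less than the paper does there.
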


\begin{proof}
    From Remark~\ref{remark:degenerate-lumping}, we only need to consider the case where $\abs{\calX} = 2$.
Being an exponential family is a property common to the entire equivalence class of lumpable families.
After removing empty lumpable families and grouping them into equivalence classes (refer to Section~\ref{section:lumpability} for the definition of equivalence classes), we obtain 26 cases. The 12 families described in \enumii can be shown to forms e-families by applying Corollary~\ref{corollary:no-multi-row-merging-block-is-sufficient}, while the remaining 14 families can be shown to not be e-families using a dimensional argument introduced later in Theorem~\ref{theorem:dimensional-criterion}.
\end{proof}

\begin{remark}
    In the three-state space setting, we observe that $\calW_{\kappa}(\calY, \calE)$ forming an e-family 
    coincides with
    $\calF_{\kappa}^{+}(\calY, \calE)$ being log-affine.
\end{remark}

\section{Algorithmics for moderate state spaces}
\label{section:algorithmics}
In this section, we provide a worst-case time complexity analysis of the criteria developed in Section~\ref{section:sufficient-condition}, Section~\ref{section:necessary-conditions} and Section~\ref{section:dimensional-criterion}. 
Complexity will be typically expressed as a function of the number of vertices or edges in the graph $(\calY, \calE)$ and the lumping map $\kappa$. 
The space $\calY$ is represented by the integers $\set{1, \dots, \abs{\calY}}$.
As is traditional in the literature, we will use the landau notation $\bigO$.
We assume that the graph $(\calY, \calE)$ is represented in the machine using adjacency lists leading to a total structure of size $\bigO(\abs{\calY} + \abs{\calE})$. Time complexity is measured in terms of elementary field operations (addition, multiplication) and memory accesses.
Storing the graph already requires $\bigO(\abs{\calY} + \abs{\calE})$ operations thus we will take this quantity as our lower bound and be mostly concerned with complexity which exceeds this value.
Recall that the lumping map can have two representations---either partitional or functional.
For the partitional representation, we store an array of size $\abs{\calX}$ corresponding to each elements of the lumped space. In each entry $x \in \calX$ of this array we store a list of elements of $\calY$ lumping into $x$.
For the functional representation, we store an array of size $\abs{\calY}$ where at each entry $y \in \calY$ we store $\kappa(y)$. It is not hard to see that one representation can be constructed from the other in $\bigO(\abs{\calY})$. As a result, both representations can be interchangeably considered, and we will henceforth assume that we can both compute $\kappa(y)$ in $\bigO(1)$ and loop over elements of $\calS_x$ in $\abs{\calS_x}$.

\begin{proposition}[Complexity of basic procedures]
We rely on the subroutines below.
\begin{enumerate}[$(i)$]
    \item[\enumi] Determine whether $(\calY, \calE)$ is strongly connected: $\bigO(\abs{\calY} + \abs{\calE})$.
    \item[\enumii] Determine whether $P \in \calW(\calY, \calE)$ is $\kappa$-lumpable: $\bigO(\abs{\calX}\abs{\calY} + \abs{\calE})$.
    \item[\enumiii] Determine whether $\calW_{\kappa}(\calY, \calE)$ is non-vacuous: $\bigO(\abs{\calX}\abs{\calY} + \abs{\calE})$.
    \item[\enumiv] Construct the lumped graph $(\calX, \calD)$ (as adjacency lists) in $\bigO(\abs{\calX}^2 + \abs{\calE})$.
    \item[\enumv] List merging blocks of $(\calY, \calE)$ with respect to $\kappa$: 
$\bigO(\abs{\calX}\abs{\calY} + \abs{\calE})$.
\end{enumerate}
\end{proposition}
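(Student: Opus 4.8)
The plan is to give, for each item, an explicit algorithm on the adjacency-list representation together with a running-time accounting; four of the five are routine graph traversal or counting, so I will concentrate effort on item~\enumiii, which first requires a combinatorial characterization of non-vacuousness. For \enumi I would build the reverse adjacency lists in $\bigO(\abs{\calY}+\abs{\calE})$ and run two depth-first searches---one from an arbitrary vertex of $(\calY,\calE)$ and one in the reverse graph---declaring the graph strongly connected precisely when both searches visit every vertex; correctness is classical and each search costs $\bigO(\abs{\calY}+\abs{\calE})$.

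Items \enumii and \enumv share a primitive: for a fixed $y\in\calY$, one scan of its out-adjacency list together with a scratch array indexed by $\calX$ (re-zeroed per vertex) computes, in $\bigO(\mathrm{outdeg}(y)+\abs{\calX})$, either all partial sums $P(y,\calS_{x'})$ (for \enumii) or all counts $\abs{\set{y'\in\calS_{x'}\colon(y,y')\in\calE}}$ (for \enumv), over every $x'\in\calX$ at once. For \enumii I store one reference value per block of $\calD$ and compare it against $P(y,\calS_{x'})$ while sweeping $y\in\calS_x$; by the Kemeny--Snell condition~\eqref{eq:kemeny-snell-condition} the chain is $\kappa$-lumpable if and only if no comparison fails, and the $\bigO(\abs{\calX}\abs{\calY})$ comparisons dominate. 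For \enumv I keep an $\abs{\calX}\times\abs{\calX}$ boolean table and flip entry $(\kappa(y),x')$ to true whenever a count exceeds $1$; by Definition~\ref{definition:merging-block} the true entries are exactly the merging blocks. Summing per-vertex work, and absorbing the one-time $\bigO(\abs{\calX}^2)\le\bigO(\abs{\calX}\abs{\calY})$ table initialization, gives $\bigO(\abs{\calX}\abs{\calY}+\abs{\calE})$ in both cases. Item~\enumiv follows the same pattern without the sums: sweep $\calE$, compute $(\kappa(y),\kappa(y'))$ in $\bigO(1)$, and append it to the adjacency lists of $\calX$ only on its first occurrence, detected via an $\abs{\calX}\times\abs{\calX}$ seen-table initialized in $\bigO(\abs{\calX}^2)$; total $\bigO(\abs{\calX}^2+\abs{\calE})$.

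For \enumiii I would first prove that, under the standing hypothesis that $(\calY,\calE)$ is strongly connected (itself verifiable by \enumi), one has $\calW_\kappa(\calY,\calE)\neq\emptyset$ if and only if $(\calY,\calE)$ is \emph{block-regular}, meaning that for every $(x,x')\in\calD$ every $y\in\calS_x$ has at least one out-edge into $\calS_{x'}$. Necessity is immediate from~\eqref{eq:kemeny-snell-condition}: were some $y_1\in\calS_x$ to reach $\calS_{x'}$ while some $y_2\in\calS_x$ did not, then $P(y_1,\calS_{x'})>0=P(y_2,\calS_{x'})$ for any $P$ supported on $\calE$. For sufficiency I exhibit a witness: pick, for each $x$, a positive probability vector $q(x,\cdot)$ supported exactly on $\set{x'\colon(x,x')\in\calD}$, and for each $y\in\calS_x$ spread the mass $q(x,x')$ positively over the---nonempty, by block-regularity---set of out-edges of $y$ into $\calS_{x'}$; then $P$ has support exactly $\calE$, hence lies in $\calW(\calY,\calE)$ by strong connectivity, while $P(y,\calS_{x'})=q(\kappa(y),x')$ shows it is $\kappa$-lumpable. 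Algorithmically, block-regularity is tested with the \enumii/\enumv primitive---record for each $y$ the set of target blocks of its out-edges and check that this set is constant over each $\calS_x$---in $\bigO(\abs{\calX}\abs{\calY}+\abs{\calE})$.

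I expect the only genuine, if minor, obstacle to be the correctness of \enumiii, specifically checking that block-regularity together with strong connectivity is \emph{sufficient} for non-vacuousness via the construction above; the other four items are standard, their stated complexities following by summing per-vertex or per-edge work against the adjacency-list data structure together with the one-time $\bigO(\abs{\calX}^2)$ array initializations.
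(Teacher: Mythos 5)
Your proposal is correct and follows the same route the paper intends: the paper's proof simply says ``simple algorithms'' for \enumii--\enumv and cites linear-time strong-connectivity algorithms for \enumi, and your adjacency-list scans with per-vertex $\bigO(\mathrm{outdeg}(y)+\abs{\calX})$ accounting are exactly the intended instantiations. The one piece of genuine mathematical content you add---the characterization of non-vacuousness by block-regularity, with the explicit lumpable witness built from a positive lumped row distribution $q$---is correct (necessity from the Kemeny--Snell condition, sufficiency because $P(y,\calS_{x'})=q(\kappa(y),x')$ and the support is exactly $\calE$, hence irreducible by strong connectivity) and is consistent with what the paper uses implicitly elsewhere, e.g.\ in the chaining lemma.
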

\begin{proof}
Simple algorithms yields \enumii, \enumiii, \enumiv, \enumv, while for \enumi,
    listing strongly connected components can be achieved by running 
Tarjan's strongly connected components algorithm \citep{tarjan1972depth}, 
Kosaraju-Sharir's algorithm \citep{sharir1981strong}, or
the path-based strong component algorithm \citep{dijkstra1976discipline}
which all run in $\bigO(\abs{\calY} + \abs{\calE})$.
\end{proof}

\begin{proposition}[Complexity of no multi-row merging block criterion (Corollary~\ref{corollary:no-multi-row-merging-block-is-sufficient})]
    $\bigO(\abs{\calX} \abs{\calY} + \abs{\calE})$.
\end{proposition}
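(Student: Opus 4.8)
The plan is to reduce the check to already-established subroutines plus a trivial post-processing step, and to bound the running time phase by phase. First I would precompute the block sizes $\abs{\calS_x}$ for every $x \in \calX$ by a single scan over $\calY$ that increments a length-$\abs{\calX}$ counter indexed by $\kappa(y)$; this costs $\bigO(\abs{\calY})$ time and $\bigO(\abs{\calX})$ space. Next I would invoke subroutine~\enumv to produce the list of all merging blocks of $(\calY,\calE)$ with respect to $\kappa$, which takes $\bigO(\abs{\calX}\abs{\calY} + \abs{\calE})$; this list has at most $\abs{\calD} \le \min(\abs{\calE}, \abs{\calX}^2)$ entries.

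Then I would iterate over the listed merging blocks $(x,x')$ and test whether $\abs{\calS_x} \geq 2$: by Definition~\ref{definition:merging-block}, $(\calY,\calE)$ has a multi-row merging block with respect to $\kappa$ if and only if at least one listed block passes this test. This scan costs $\bigO(\abs{\calX}^2) = \bigO(\abs{\calX}\abs{\calY})$ since $\abs{\calX} \le \abs{\calY}$, and is in any case dominated by the cost of producing the list. If no block passes, then $(\calY,\calE)$ has no multi-row merging block and Corollary~\ref{corollary:no-multi-row-merging-block-is-sufficient} certifies that $\calW_\kappa(\calY,\calE)$ forms an e-family; otherwise the criterion fails. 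Summing the three phases yields the claimed bound $\bigO(\abs{\calX}\abs{\calY} + \abs{\calE})$.

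For completeness, the same bound is attained by a self-contained algorithm that does not call subroutine~\enumv: for each vertex $y \in \calY$, scan the out-adjacency list of $y$ while maintaining a length-$\abs{\calX}$ array $c$ with $c[x'] = \abs{\set{(y,y') \in \calE \colon \kappa(y') = x'}}$ built incrementally, and halt reporting failure of the criterion as soon as some $c[x'] \geq 2$ occurs together with $\abs{\calS_{\kappa(y)}} \geq 2$. Re-initializing the array once per vertex contributes $\bigO(\abs{\calX}\abs{\calY})$, and the adjacency-list scans contribute $\bigO(\abs{\calE})$ in total. I do not anticipate a genuine obstacle here; the only point requiring care is the per-vertex re-initialization of the $\abs{\calX}$-sized auxiliary structure, which is precisely what produces the $\abs{\calX}\abs{\calY}$ term (one could shave it to $\bigO(\abs{\calY}+\abs{\calE})$ by resetting only the touched entries, but this refinement is not needed for the stated bound).
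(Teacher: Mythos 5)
Your proposal is correct and matches the route the paper intends: the paper states this proposition without an explicit proof, leaving it to follow from subroutine \enumv of the basic-procedures proposition (listing merging blocks in $\bigO(\abs{\calX}\abs{\calY}+\abs{\calE})$) followed by the trivial filter $\abs{\calS_x}\geq 2$, which is exactly your first argument. Your self-contained counter-array algorithm is a valid and slightly more explicit alternative that attains the same bound, but it does not change the substance of the argument.
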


\begin{proposition}[Complexity of dimensional criterion (Theorem~\ref{theorem:dimensional-criterion})]
    $\bigO(\abs{\calE}^{\omega})$ with $\omega \leq 2.371552$.
\end{proposition}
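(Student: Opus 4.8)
The plan is to reduce the verification of the criterion in Theorem~\ref{theorem:dimensional-criterion} to a single rank computation and then to invoke a fast matrix-multiplication based rank algorithm. Checking the criterion consists of three tasks: (a) evaluating the target integer $\abs{\calE} + \abs{\calY} + \abs{\calD} - \abs{\calX} - \sum_{(x,x') \in \calD} \abs{\calS_x}$ on the right-hand side; (b) computing $\dim\bigl(\spann(\calG_\kappa(\calY, \calE)) \oplus \calN(\calY, \calE)\bigr)$; and (c) comparing the two integers. For (a), build the lumped graph $(\calX, \calD)$ in $\bigO(\abs{\calX}^2 + \abs{\calE})$ and list the merging blocks in $\bigO(\abs{\calX}\abs{\calY} + \abs{\calE})$; the block sizes then sum up immediately, and since $\calW_\kappa(\calY, \calE) \neq \emptyset$ every block $(x,x') \in \calD$ carries at least $\abs{\calS_x}$ edges, so in particular $\sum_{(x,x') \in \calD}\abs{\calS_x} \le \abs{\calE}$. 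Hence (a) costs $\bigO(\abs{\calX}^2 + \abs{\calX}\abs{\calY} + \abs{\calE})$.

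For (b), realise the two subspaces through the explicit bases constructed in Section~\ref{section:dimensional-criterion}. Identifying the merging rows $\calM_{x,x'}$, the block families $\calM$ and $\calU$, the edge set $\calR$, and a choice of anchor sets $E_{x,x'}$ all fit within $\bigO(\abs{\calX}\abs{\calY} + \abs{\calE})$ operations --- for each $y \in \calY$ one buckets its out-edges by the $\kappa$-image of their endpoints. The system $\calB_\kappa(\calY, \calE)$ then consists of $\abs{\calU} + \abs{\calR}$ vectors of $\bbR^{\calE}$: each $G^{\uparrow}_{x_0,x_0'}$ is the $0$--$1$ indicator of the $\abs{\calS_{x_0}}$ anchor edges of its block, and each $G^{\uparrow}_{y_0,y_0'}$ is a single coordinate vector, so all of them together carry $\bigO\bigl(\sum_{(x,x') \in \calD}\abs{\calS_x} + \abs{\calR}\bigr) = \bigO(\abs{\calE})$ nonzero entries. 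The basis $\set{C} \cup \set{N_{y_0} \colon y_0 \in \calY \setminus \set{y_\star}}$ of $\calN(\calY, \calE)$ adds $\abs{\calY}$ further vectors supported on the edges incident to the respective $y_0$, hence $\bigO(\abs{\calE})$ nonzeros in total. Stack all $r \eqdef \abs{\calU} + \abs{\calR} + \abs{\calY}$ of these vectors as the rows of a matrix $M$ with entries in $\set{-1, 0, 1}$; by definition of the two bases, $\dim\bigl(\spann(\calG_\kappa(\calY, \calE)) \oplus \calN(\calY, \calE)\bigr) = \operatorname{rank}(M)$. Because $(\calY, \calE)$ is strongly connected we have $\abs{\calY} \le \abs{\calE}$, and clearly $\abs{\calU} \le \abs{\calD} \le \abs{\calE}$ and $\abs{\calR} \le \abs{\calE}$, so $r = \bigO(\abs{\calE})$; thus $M$ is an integer matrix of dimensions $\bigO(\abs{\calE}) \times \abs{\calE}$ with $\bigO(\abs{\calE})$ nonzero entries, assembled in $\bigO(\abs{\calX}\abs{\calY} + \abs{\calE})$ operations.

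It then remains to compute $\operatorname{rank}(M)$. Since $M$ is a rational (indeed integer) matrix, its rank over $\mathbb{Q}$ coincides with its rank over $\bbR$, and it is obtained from a triangular factorisation computed via fast matrix multiplication \citep{bunch1974triangular}: the rank of an $m \times n$ matrix is computable in $\bigO(\max(m,n)^{\omega})$ field operations, where $\omega < 2.371552$ is the matrix-multiplication exponent --- and, per our conventions, costs are counted in field operations, so bit growth under fraction-free elimination is immaterial. With $m = r = \bigO(\abs{\calE})$ and $n = \abs{\calE}$ this costs $\bigO(\abs{\calE}^{\omega})$, after which (c) is a single integer comparison. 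Altogether the running time is $\bigO(\abs{\calE}^{\omega}) + \bigO(\abs{\calX}^2 + \abs{\calX}\abs{\calY} + \abs{\calE})$, and since strong connectivity forces $\abs{\calX} \le \abs{\calY} \le \abs{\calE}$ the second term is $\bigO(\abs{\calE}^2) \subseteq \bigO(\abs{\calE}^{\omega})$, giving $\bigO(\abs{\calE}^{\omega})$ as claimed. The only place the exponent $\omega$ enters is the rank step, so the main obstacle is the bookkeeping needed to certify that $M$ has only $\bigO(\abs{\calE})$ rows and $\bigO(\abs{\calE})$ nonzero entries --- making its construction and the computation of the target integer negligible against the rank computation --- together with the (standard) observation that computing the rank of a rational matrix reduces to matrix multiplication rather than cubic-time Gaussian elimination.
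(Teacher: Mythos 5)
Your proposal is correct and follows essentially the same route as the paper: identify the rank computation of the stacked basis matrix (the $\abs{\calU}+\abs{\calR}$ vectors of $\calB_\kappa(\calY,\calE)$ together with the $\abs{\calY}$ generators of $\calN(\calY,\calE)$, all of dimension $\abs{\calE}$) as the bottleneck, and bound it by $\bigO(\abs{\calE}^{\omega})$ via a fast-matrix-multiplication-based rank algorithm. The extra bookkeeping you supply (cost of building the bases, sparsity of the matrix, the comparison $\abs{\calX}\le\abs{\calY}\le\abs{\calE}$) only makes explicit what the paper dismisses as clearly dominated by the rank step.
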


\begin{proof}
    It should be clear that the bottleneck operation is the computation of the rank of the system of matrices.
     Typically, computing the rank of a matrix is done by Gaussian elimination. For a system of $m$ vectors of dimension $n$, this approach can be theoretically implemented in $\bigO(m n ^{\omega - 1})$
\citep{ibarra1982generalization} where $2 \leq \omega \leq 2.371552$ is the matrix multiplication exponent \citep{williams2024new}.
In our case, we have at most $\abs{\calE} + \abs{\calY}$ vectors, each of dimension $\abs{\calE}$. As a result, we obtain a worst-case time complexity of $\bigO((\abs{\calE} + \abs{\calY}) \abs{\calE}^{\omega - 1})$.
\end{proof}

\begin{remark}
In practice---that is for most implementations---the complexity is of order $\bigO(\abs{\calE}^3)$.
    Note that with a parallel algorithm, it is possible to deterministically compute this rank in $\bigO(\log^2\abs{\calY})$ time \citep{mulmuley1986fast} using a polynomial number of processors.
    Since the rank calculation is the bottleneck, distributing this task would substantially improve the efficiency of our algorithm.
\end{remark}

\begin{proposition}
    There exists a $\bigO(\abs{\calY}^\omega)$ time  verifiable witness that can be used to conclude that a lumpable family is not an e-family.
\end{proposition}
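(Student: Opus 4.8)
The plan is to read the question through the dimensional criterion of Theorem~\ref{theorem:dimensional-criterion}, but to organise the computation so that the only nontrivial rank computation is carried out on a matrix with $\abs{\calY}$ columns instead of $\abs{\calE}$. Write $U \eqdef \spann(\calG_\kappa(\calY,\calE))$ and $N \eqdef \calN(\calY,\calE)$, so that $\dim(U + N) = \dim U + \dim N - \dim(U \cap N)$. Here $\dim U = \abs{\calU} + \abs{\calR}$ is already known from the explicit basis $\calB_\kappa(\calY,\calE)$ built above, and $\dim N = \abs{\calY}$. Substituting into Theorem~\ref{theorem:dimensional-criterion} and using the value of $\dim \calW_\kappa(\calY,\calE)$ from Proposition~\ref{proposition:foliation-of-lumpable-kernels}, one sees that $\calW_\kappa(\calY,\calE)$ is an e-family if and only if $\dim(U \cap N)$ equals the explicit integer $D^\star \eqdef \abs{\calU} + \abs{\calR} + \abs{\calX} + \sum_{(x,x') \in \calD}\abs{\calS_x} - \abs{\calE} - \abs{\calD}$; moreover $\calW_\kappa(\calY,\calE) \subseteq \ehull(\calW_\kappa(\calY,\calE))$ forces $\dim \ehull$ to dominate $\dim \calW_\kappa(\calY,\calE)$ and hence $\dim(U \cap N) \le D^\star$ unconditionally, so \emph{failure} to be an e-family is equivalent to $\dim(U \cap N) \le D^\star - 1$. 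All of $\abs{\calX},\abs{\calY},\abs{\calE},\abs{\calD},\abs{\calU},\abs{\calR}$ and $\sum_{(x,x')}\abs{\calS_x}$, hence $D^\star$, are computable in $\bigO(\abs{\calX}\abs{\calY} + \abs{\calE})$ time, so the whole question reduces to certifying that $\dim(U \cap N)$ is small.

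The next step is to make $U \cap N$ concrete. For every $F \in \calF^+_\kappa(\calY,\calE)$, every block $(x,x') \in \calD$, and every non-merging row $y \in \calS_x \setminus \calM_{x,x'}$ (notation of \eqref{equation:block-type-nomenclature}) one has $\log F(y, u'_{x'}(y)) = \log(\kappa_\star F)(x,x')$, where $u'_{x'}(y)$ is the unique element of $\calS_{x'}$ adjacent to $y$; hence $U$ is contained in the space of $G \in \calF(\calY,\calE)$ that are \emph{block-consistent}, meaning $G(y_1, u'_{x'}(y_1)) = G(y_2, u'_{x'}(y_2))$ for all $(x,x') \in \calD$ and all non-merging $y_1,y_2 \in \calS_x$, and a dimension count against $\dim U = \abs{\calU} + \abs{\calR}$ shows the two spaces coincide. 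Parametrising $N$ by $(f,c) \in \bbR^\calY \times \bbR$ via $N_{f,c}(y,y') = f(y') - f(y) + c$, the constant $c$ cancels in every block-consistency equation, so $U \cap N$ is isomorphic to the solution space of the homogeneous system carrying, for each $(x,x') \in \calU$ and each pair of distinct non-merging rows $y_1,y_2$ of that block, the equation $f(u'_{x'}(y_1)) - f(y_1) = f(u'_{x'}(y_2)) - f(y_2)$. Assembling these into a matrix $A$ with $\abs{\calY}$ columns, entries in $\set{-1,0,1}$, and four nonzeros per row, and checking that the quotient of $N$ by the constant functions exactly absorbs the free parameter $c$, one obtains $\dim(U \cap N) = \abs{\calY} - \operatorname{rank} A$. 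Consequently $\calW_\kappa(\calY,\calE)$ fails to be an e-family if and only if $\operatorname{rank} A \ge r^\star + 1$, where $r^\star \eqdef \dim\calW_\kappa(\calY,\calE) + \abs{\calY} - \abs{\calU} - \abs{\calR}$ (equivalently $\abs{\calY} - D^\star$).

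The witness is then a list of $r^\star + 1$ rows of $A$, each encoded compactly as a quadruple $(x,x',y_1,y_2)$ with $(x,x') \in \calU$ and $y_1 \ne y_2$ non-merging rows of that block, together with the assertion that these rows are linearly independent. To check it, the verifier computes $r^\star$ as above; then, for each quadruple, it confirms in $\bigO(\deg y_1 + \deg y_2)$ time --- or $\bigO(\abs{\calE})$ in total after one preprocessing pass recording $u'_{x'}(\cdot)$ on the non-merging rows --- that $y_1,y_2 \in \calS_x$ each have a unique $\calS_{x'}$-neighbour, and it assembles the sparse vector $e_{u'_{x'}(y_1)} - e_{y_1} - e_{u'_{x'}(y_2)} + e_{y_2} \in \bbR^\calY$; finally it computes the rank of the resulting $(r^\star+1) \times \abs{\calY}$ matrix and accepts iff it equals $r^\star+1$. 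Whenever the certificate is non-vacuous one has $r^\star + 1 \le \abs{\calY}$ (since $\operatorname{rank} A \le \abs{\calY}$ always), so this last computation takes $\bigO(\abs{\calY}^\omega)$ time by the same subcubic rank procedure \citep{ibarra1982generalization} used for Theorem~\ref{theorem:dimensional-criterion}, which dominates the $\bigO(\abs{\calX}\abs{\calY} + \abs{\calE}) = \bigO(\abs{\calY}^2) \subseteq \bigO(\abs{\calY}^\omega)$ cost of the other steps because $\omega \ge 2$. Soundness is immediate: acceptance certifies $\operatorname{rank} A \ge r^\star+1$, hence $\dim(U+N) = \abs{\calU}+\abs{\calR}+\operatorname{rank} A$ strictly exceeds the Theorem~\ref{theorem:dimensional-criterion} threshold, hence $\calW_\kappa(\calY,\calE)$ is not an e-family; the witness is also complete, since a non-e-family forces $\operatorname{rank} A \ge r^\star+1$ and therefore the existence of $r^\star+1$ independent rows. (A simpler but only sufficient certificate, when one happens to exist, is a redundant multi-row merging block with its witnessing set $\calT$, which by Theorem~\ref{theorem:redundant-merging-block-criterion} is validated by a single strong-connectivity test on a subgraph with at most $\abs{\calY}$ vertices.)

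The crux I expect is establishing the identity $\dim(U \cap N) = \abs{\calY} - \operatorname{rank} A$: this rests on the equality $U = \{\text{block-consistent functions}\}$, proved by a dimension count against the constructed basis, and on the bookkeeping verifying that in the $(f,c)$-parametrisation the constant $c$ decouples from every constraint while the quotient of $N$ by the constant functions cancels exactly the one extra degree of freedom $c$ supplies --- so that the rank genuinely lives over $\bbR^\calY$ rather than $\bbR^\calE$. Once this identity is in place, the certificate and its $\bigO(\abs{\calY}^\omega)$ verification are routine, the only remaining care being --- as in the complexity analysis for Theorem~\ref{theorem:dimensional-criterion} --- to note that the exact rank of the sparse $\set{-1,0,1}$-valued $(r^\star+1)\times\abs{\calY}$ witness matrix is obtained within the stated bound because its smaller dimension is at most $\abs{\calY}$.
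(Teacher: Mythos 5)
Your proposal is correct, but it proves the proposition by an entirely different route from the paper. The paper's witness is simply a triple $(P_0,P_1,t_\star)$ with $P_0,P_1\in\calW_\kappa(\calY,\calE)$ such that the e-geodesic point $\gamma^{(e)}_{P_0,P_1}(t_\star)$ violates the Kemeny--Snell condition; verification amounts to one Perron--Frobenius eigenpair computation on a $\abs{\calY}\times\abs{\calY}$ Hadamard product (costed at $\bigO(\abs{\calY}^\omega)$) plus a lumpability check, and existence of the witness is exactly \citet[Corollary~3]{nagaoka2005exponential}. You instead certify failure of the dimensional criterion of Theorem~\ref{theorem:dimensional-criterion}: you identify $\spann(\calG_\kappa(\calY,\calE))$ with the space of block-consistent functions (inclusion from the Kemeny--Snell condition on non-merging rows, equality by the dimension count $\abs{\calU}+\abs{\calR}$ against the constructed basis), deduce $\dim\bigl(\spann(\calG_\kappa(\calY,\calE))\cap\calN(\calY,\calE)\bigr)=\abs{\calY}-\operatorname{rank}A$ for an explicit $\set{-1,0,1}$-valued matrix with $\abs{\calY}$ columns (the constant $c$ cancelling against the one-dimensional kernel of $(f,c)\mapsto N_{f,c}$), and exhibit $r^\star+1$ independent rows of $A$ as the certificate. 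Both arguments are sound, and each buys something the other does not. The paper's witness is shorter and conceptually transparent (it is literally a geodesic escaping the manifold), but its verification involves a real parameter $t_\star$, real Hadamard powers, and an eigenpair that is not exactly computable in finitely many field operations, so the stated $\bigO(\abs{\calY}^\omega)$ bound is really a numerical-linear-algebra convention. Your certificate is purely combinatorial and integer-valued, so exact rank verification genuinely fits in $\bigO(\abs{\calY}^\omega)$; as a byproduct your reduction shows that the full dimensional criterion can be decided with the heavy rank computation living over $\bbR^{\calY}$ rather than $\bbR^{\calE}$, improving on the $\bigO(\abs{\calE}^{\omega})$ bound the paper states for Theorem~\ref{theorem:dimensional-criterion}. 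The price is that your argument has more load-bearing steps: the equality of $\spann(\calG_\kappa(\calY,\calE))$ with the block-consistent space, the inclusion--exclusion $\dim(U+N)=\dim U+\dim N-\dim(U\cap N)$ (note the paper's $\oplus$ must be read as an ordinary sum here), and the bookkeeping that constants lie in $\ker A$ so that $\operatorname{rank}A\leq\abs{\calY}-1$ and the certificate is never larger than $\abs{\calY}\times\abs{\calY}$; all of these check out, but they deserve the careful write-up you flag.
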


\begin{proof}
Suppose that $\calW_{\kappa}(\calY , \calE)$ does not form an e-family. Then there exists a $P_0, P_1 \in \calW_{\kappa}(\calY , \calE)$ and $t \in \bbR$ such that $\gamma_{P_0, P_1}^{(e)}(t)$ is not $\kappa$-lumpable. 
The triplet $(P_0, P_1, t)$ is a witness and it remains to argue that it can be verified in polynomial time.
Since constructing a point at parameter $t$ on the e-geodesic amounts to computing the Perron-Frobenius eigenpair of a Hadamard product of two matrices of size $\abs{\calY} \times \abs{\calY}$, it follows that
$\gamma_{P_0, P_1}^{(e)}(t)$ can be computed in $\bigO(\abs{\calY}^\omega)$, and it can be verified that it is not $\kappa$-lumpable in $\bigO(\abs{\calX} \abs{\calY} + \abs{\calE})$.
\end{proof}

\section*{Acknowledgments}
We thank Hiroshi Nagaoka and Jun'ichi Takeuchi for enlightening discussions and for bringing to our attention an error in a earlier version of this manuscript.

\bibliography{bibliography}
\bibliographystyle{abbrvnat}

\end{document}